\newtheorem{theorem}{Theorem}[section]
\newtheorem{proposition}[theorem]{Proposition}
\newtheorem{corollary}[theorem]{Corollary}
\newtheorem{lemma}[theorem]{Lemma}
\newtheorem{introthm}{Theorem}
\newtheorem*{the_example}{Theorem \ref{introthm:Example}}
\theoremstyle{definition}
\newtheorem*{question}{Question}
\theoremstyle{remark}
\newtheorem{remark}[theorem]{Remark}
\newcommand{\thmref}[1]{Theorem~\ref{#1}}
\newcommand{\propref}[1]{Proposition~\ref{#1}}
\newcommand{\secref}[1]{\S\ref{#1}}
\newcommand{\secsref}[2]{\S\S\ref{#1}--\ref{#2}}
\newcommand{\lemref}[1]{Lemma~\ref{#1}}
\newcommand{\corref}[1]{Corollary~\ref{#1}}
\newcommand{\figref}[1]{Fig.~\ref{#1}}
\DeclareMathOperator{\area}{area}
\DeclareMathOperator{\diam}{diam}
\DeclareMathOperator{\gr}{gr}
\DeclareMathOperator{\Gr}{Gr}
\DeclareMathOperator{\std}{std}
\DeclareMathOperator{\up}{\pi_U}
\DeclareMathOperator{\down}{\pi_D}
\DeclareMathOperator{\Hom}{Hom}
\DeclareMathOperator{\I}{i}
\newcommand{\emul}{\stackrel{{}_\ast}{\asymp}}
\newcommand{\gmul}{\stackrel{{}_\ast}{\succ}}
\newcommand{\lmul}{\stackrel{{}_\ast}{\prec}}
\newcommand{\eadd}{\stackrel{{}_+}{\asymp}}
\newcommand{\gadd}{\stackrel{{}_+}{\succ}}
\newcommand{\ladd}{\stackrel{{}_+}{\prec}}
\newcommand{\graph}{\theta}
\newcommand{\identity}{\mathrm{id}}
\newcommand{\calA}{{\mathcal A}}
\newcommand{\calE}{{\mathcal E}}
\newcommand{\calI}{{\mathcal I}}
\newcommand{\calR}{{\mathcal R}}
\newcommand{\calT}{{\mathcal T}}
\newcommand{\T}{{\mathcal T}}
\newcommand{\F}{{\mathcal F}}
\newcommand{\ML}{\mathcal{ML}}
\newcommand{\PML}{\mathcal{PML}}
\newcommand{\G}{{\mathcal G}}
\newcommand{\ts}{{{\mathcal T}(S)}}
\newcommand{\QF}{{\mathcal{QF}}}
\newcommand{\PSL}{\mathrm{PSL}}
\newcommand{\hol}{\mathrm{hol}}
\newcommand{\X}{\mathcal{X}}
\newcommand{\CP}{\mathbb{CP}}
\renewcommand{\P}{\mathcal{P}}
\newcommand{\sslash}{/\!\!/}
\newcommand{\pl}{\mathrm{Pl}}
\newcommand{\base}{O}
\renewcommand{\vec}[1]{\overrightarrow{{#1}}}
\newcommand{\qd}{\mathcal{Q}}
\newcommand{\dhyp}{d_{\mathrm{hyp}}}
\newcommand{\rhohyp}{\rho_{\mathrm{hyp}}}
\newcommand{\rhoth}{\rho_{\mathrm{Th}}}
\newcommand{\hyplap}{\Delta_{\mathrm{hyp}}}
\newcommand{\im}{\mathrm{Im}}
\newcommand{\re}{\mathrm{Re}}
\renewcommand{\bar}{\overline}
\renewcommand{\tilde}{\widetilde}
\renewcommand{\Tilde}{\widetilde}
\newcommand{\param}{{\mathchoice{\mkern1mu\mbox{\raise2.2pt\hbox{$
\centerdot$}}
\mkern1mu}{\mkern1mu\mbox{\raise2.2pt\hbox{$\centerdot$}}\mkern1mu}{
\mkern1.5mu\centerdot\mkern1.5mu}{\mkern1.5mu\centerdot\mkern1.5mu}}}
\newcommand{\Teich}{Teich\-m\"uller }
\newcommand{\Extension}{{\tt \delta}}
\newcommand{\FinalBound}{{\tt K}}
\newcommand{\Lipschitz}{{\tt L}}
\newcommand{\distance}{{\tt d}}
\newcommand{\Distance}{{\tt D}}
\newcommand{\height}{{\tt h}}
\newcommand{\Haus}{{\tt C}}
\newcommand{\measure}{{\tt m}}
\newcommand{\length}{{\tt l}}
\newcommand{\LipBound}{{\tt B}}
\newcommand{\quasi}{{\tt q}}
\newcommand{\Height}{{\tt H}}
\newcommand{\Length}{{\tt L}}
\newcommand{\Measure}{{\tt M}}
\newcommand{\vertical}{{\mathcal V}}
\newcommand{\horizontal}{{\mathcal H}}
\newcommand{\Xnew}{\hat{X}}
\newcommand\C{{\mathbb C}}
\newcommand\N{{\mathbb N}}
\newcommand\R{{\mathbb R}}
\newcommand\from{{\colon \,}}
\newcommand{\hyp}{{\mathbb H}}
\newcommand{\Xstd}{X_{\std}}
\newcommand{\Ustd}{U_{\std}}
\newcommand{\so}{\omega_{\std}}
\newcommand{\bvp}{{\overline \varphi}}
\newcommand{\bso}{{\overline \omega_{\std}}}
\newcommand{\bU}{{\overline U}}
\newcommand{\bH}{{\overline \horizontal}}
\newcommand{\btheta}{{\overline \theta}}
\newcommand{\bnu}{{\overline \nu}}
\newcommand{\ep}{\epsilon}
\begin{document}

\title{Grafting Rays Fellow Travel Teichm\"uller Geodesics}
\author{Young-Eun Choi}
\email{youngeun.choi@stanford.edu}
\author{David Dumas}
\thanks{The second author was partially supported by the NSF through
 DMS-0402964 and DMS-0805525.}
\email{ddumas@math.uic.edu}
\urladdr{http://www.math.uic.edu/\textasciitilde ddumas/}
\author{Kasra Rafi}
\email{rafi@math.ou.edu}
\urladdr{http://www.math.ou.edu/\textasciitilde rafi/}
\date{April 17, 2011}

\begin{abstract}
Given a measured geodesic lamination on a hyperbolic
  surface, grafting the surface along multiples of the lamination
  defines a path in Teichm\"uller space, called the grafting ray. We
  show that every grafting ray, after reparameterization, is a
  Teichm\"uller quasi-geodesic and stays in a bounded neighborhood of 
  some Teichm\"uller geodesic.

  As part of our approach, we show that grafting rays have controlled
  dependence on the starting point. That is, for any measured geodesic
  lamination $\lambda$, the map of Teichm\"uller space which is
  defined by grafting along $\lambda$ is $\Lipschitz$--Lipschitz with
  respect to the Teichm\"uller metric, where $\Lipschitz$ is a
  universal constant. This Lipschitz property follows from an
  extension of grafting to an open neighborhood of Teichm\"uller space
  in the space of quasi-Fuchsian groups.
\end{abstract}

\maketitle

\section{Introduction}
Let $S$ be a closed surface with finite genus, possibly with finitely many punctures.
Let $X$ be a point in \Teich space $\ts$, and let $\lambda$ be a measured geodesic 
lamination on $X$ of compact support.  The pair $X$ and the projective class 
$[\lambda]$ determines a \Teich geodesic ray which starts at $X$ and where 
the associated vertical foliation is
a multiple of $\lambda$ \cite{hubbard-masur}, \cite{kerckhoffasymp}.  Let 
$\G(t,\lambda, X)$, $t \geq 0$, denote the point on this ray whose \Teich 
distance from $X$ is $t$. The pair $X$ and $\lambda$ determines another 
ray in $\ts$ defined by grafting $X$ along $s \lambda$, for $s \in \R_+$.  We 
denote the resulting Riemann surface by $\gr(s\lambda,X)$.  
(See \cite{kamishima-tan}, \cite{tanigawa}, \cite{mcmullen} for background
on grafting.)

In this paper we show that each grafting ray stays in a bounded
neighborhood of a \Teich geodesic:

\begin{introthm}
\label{thm:Main}
Let $X \in \T(S)$ be $\epsilon$--thick and let $\lambda$ be a measured
geodesic lamination on $X$ with unit hyperbolic length.  Then for all
$t \geq 0$ we have
$$ 
d_\calT \Big ( \gr \big(e^{2t}\lambda, X \big), \G \big(t,\lambda,X
\big) \Big) 
   \leq \FinalBound
$$
where the constant $\FinalBound$ depends only on $\ep$ and the 
topology of $S$ (it is independent of $\lambda$ and $X$). 
\end{introthm}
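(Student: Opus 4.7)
The plan is to certify that $Y_t := \gr(e^{2t}\lambda, X)$ and $Z_t := \G(t,\lambda,X)$ both behave, metrically and combinatorially, as if $\lambda$ were playing the role of the vertical foliation of a Teichm\"uller geodesic, and then invoke a fellow--traveling criterion. The reparametrization $s = e^{2t}$ is dictated by the extremal length of $\lambda$: along the Teichm\"uller ray one has $\Ext_{Z_t}(\lambda) = e^{-2t}\Ext_X(\lambda)$, while grafting by $s\lambda$ inserts a Euclidean cylinder of modulus comparable to $s/\ell_X(\lambda) = s$, forcing $\Ext_{Y_t}(\lambda) \asymp e^{-2t}$. Since $X$ is $\epsilon$--thick and $\ell_X(\lambda) = 1$, these estimates are uniform in $\lambda$, and the quantitative matching of vertical extremal length at every time is the first thing I would record.

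The core of the argument should be a comparison of the thick--thin decompositions of $Y_t$ and $Z_t$. For the Teichm\"uller ray, Rafi's description identifies the short curves as those $\gamma$ for which the balance between horizontal and vertical contributions degenerates, with twisting parameters determined by the flat structure. For the grafting ray, the flat structure supplied by the grafting annulus plus the hyperbolic complement plays an analogous role: a curve $\gamma$ becomes short on $Y_t$ precisely when $\ell_X(\gamma)$ is small and $i(\gamma,\lambda)$ is controlled, and its twist around $\lambda$ is read off from the grafted cylinder. I would make these comparisons quantitative, showing that for each essential subsurface $W$ the projection $\pi_W$ of $Y_t$ agrees with that of $Z_t$ up to an additive constant depending only on $\epsilon$ and the topology of $S$. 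Combined with Step 1, this puts $Y_t$ and $Z_t$ at bounded Teichm\"uller distance via the standard distance formula.

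To deploy the Lipschitz property stated in the abstract, I would use it in two ways. First, it lets one perturb $X$ within a bounded neighborhood (for example, to arrange convenient normalizations of the initial quadratic differential) without affecting the conclusion. Second, it provides the needed continuity to pass from discrete checkpoints $t_k = k t_0$ to all times: a bound of the form $d_\calT(Y_{t_k}, Z_{t_k}) \leq \FinalBound'$ at the checkpoints, together with $\Lipschitz$--Lipschitz behavior of the time-$t_0$ grafting map on the intervals $[t_k, t_{k+1}]$, yields the full conclusion with the constant absorbed into $\FinalBound$.

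The main obstacle will be Step 2: controlling the geometry of $Y_t$ inside the thin parts that develop as $t$ grows. When $\lambda$ is a multicurve or has isolated leaves, the inserted cylinders make the geometry of $Y_t$ essentially explicit and the comparison is local. The delicate case is when leaves of $\lambda$ spiral densely onto the short curves of $Y_t$: here the grafting is spread out and one must show that the twisting this induces is quantitatively the same as the twisting produced along $Z_t$ by the flat structure. This is where I expect the bulk of the technical work to go, and where the extension of grafting to quasi-Fuchsian space (used to prove the Lipschitz property) should provide the analytic tools needed to make the matching precise.
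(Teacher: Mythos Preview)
Your plan is genuinely different from the paper's, and as written it has real gaps at its two load-bearing points.

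\textbf{What the paper actually does.} There is no thick--thin analysis of the grafted surface and no subsurface--projection/distance--formula argument. Instead, for a fixed ``standard'' surface $X_{\mathrm{std}}$ and for $\lambda$ in a small open set $U_{\mathrm{std}}\subset\ML(S)$ near the stable lamination of a pseudo-Anosov, the paper constructs the orthogonal foliation $\mathcal H(\lambda,X_{\mathrm{std}})$, decomposes $X_{\mathrm{std}}$ into rectangles, and writes down by hand a uniformly quasiconformal map from $\gr(e^{2t}\lambda,X_{\mathrm{std}})$ to a singular Euclidean surface which is literally a point on a Teichm\"uller geodesic. This is Proposition~\ref{prop:TheMap}. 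The passage to arbitrary $(\lambda,X)$ is then purely soft: two pseudo-Anosovs are used so that some bounded power of one of them pushes $[\lambda]$ into $[U_{\mathrm{std}}]$; the Lipschitz property of $\gr_\lambda$ (Theorem~\ref{thm:Lipschitz}) moves the basepoint of the grafting ray by a bounded distance; and Rafi's fellow--traveling theorem for Teichm\"uller rays with the same vertical foliation (Theorem~\ref{thm:rafi}) moves the basepoint of the geodesic. The triangle inequality finishes.

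\textbf{Gap in your Step 2.} Your Step 2 is the whole theorem, and you have not supplied a mechanism. For a general $\lambda$ (no isolated leaves), grafting does not insert any cylinder at all; the Thurston metric changes on the support of $\lambda$, and there is no localized piece whose modulus you can read off. Showing that $\pi_W(Y_t)$ and $\pi_W(Z_t)$ agree up to $O_\epsilon(1)$ for \emph{every} subsurface $W$, uniformly in $\lambda$, is exactly the difficulty the paper circumvents with the explicit rectangle map. The paper in fact remarks that D{\'\i}az--Kim carried out a combinatorial comparison of this flavor for weighted simple closed curves, but with constants depending on the multicurve in a way that blocks a limiting argument to general $\lambda$. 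Your outline does not indicate how you would avoid the same obstruction.

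\textbf{Gap in your Step 3.} The checkpoint argument does not work as stated. Theorem~\ref{thm:Lipschitz} controls $X\mapsto \gr(\lambda,X)$ for fixed $\lambda$; it says nothing about advancing the grafting parameter. Moreover grafting does not compose additively along a ray: $\gr(s_1\lambda,\gr(s_2\lambda,X))\neq \gr((s_1+s_2)\lambda,X)$ in general, because the hyperbolic geodesic representative of $\lambda$ changes after the first grafting. So you cannot write $Y_{t_{k+1}}=\gr(\mu,Y_{t_k})$ for a fixed $\mu$ and invoke the $\Lipschitz$--Lipschitz bound. In the paper the Lipschitz property is used only once, to replace $X$ by the nearby standard surface $\hat X=\varphi^{-n}(X_{\mathrm{std}})$; it is not used to propagate estimates in $t$.
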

Here $d_\calT$ is the \Teich distance 
and we say that $X \in
\T(S)$ is \emph{$\epsilon$--thick} if the injectivity radius of the hyperbolic
metric on $X$ is at least $\epsilon$ at every point.

The proof of Theorem \ref{thm:Main} actually produces a bound on the distance 
that depends continuously on the point in moduli space determined by $X$;
the existence of a constant depending only on the injectivity radius
is then a consequence of the compactness of the $\epsilon$--thick part
of moduli space.
However, the dependence on the injectivity radius of $X$ is
unavoidable:

\begin{introthm} \label{introthm:Example} There exists a sequence of
  points $X_n$ in $\ts$ and measured laminations $\lambda_n$ with unit
  hyperbolic length on $X_n$ such that for any sequence $Y_n$ in
  $\ts$,
$$\sup_{n,{t \geq 0}} d_\calT \left(\gr(e^{2t} \lambda_n, X_n), \G(t,\lambda_n,Y_n) \right ) 
= \infty.$$
\end{introthm}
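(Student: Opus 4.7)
\bigskip
\noindent\emph{Proof proposal for Theorem \ref{introthm:Example}.}

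The plan is to construct $X_n$ with one very short simple closed geodesic $\alpha_n$ and to take $\lambda_n$ to be a fixed filling lamination appropriately rescaled, then to derive a lower bound on the Teichm\"uller distance from the extremal length of $\alpha_n$ that grows with $t$ regardless of the choice of $Y_n$.

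Fix a filling measured geodesic lamination $\mu$ on $S$. Choose $(X_n) \subset \ts$ with a simple closed geodesic $\alpha_n$ of hyperbolic length $\ell_n = 1/n \to 0$ and with the complement $X_n \setminus \alpha_n$ staying in a fixed thick part. Since the leaves of $\mu$ must traverse the collar of $\alpha_n$ (whose width is $\asymp \log(1/\ell_n)$) a fixed positive number of times, $L_n := \ell_{X_n}(\mu) \asymp \log(1/\ell_n)$. Set $\lambda_n := \mu/L_n$, which has unit hyperbolic length on $X_n$ and projective class $[\lambda_n] = [\mu]$ independent of $n$.

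The two key estimates are opposite in sign. First, grafting conformally embeds $X_n$ into $\gr(e^{2t}\lambda_n, X_n)$, so monotonicity of extremal length under conformal inclusion gives
\[
\Ext_{\gr(e^{2t}\lambda_n, X_n)}(\alpha_n) \leq \Ext_{X_n}(\alpha_n) \asymp \ell_n
\qquad \text{for all } t \geq 0.
\]
Second, Gardiner's inequality $\Ext(\alpha)\Ext(\beta) \geq i(\alpha,\beta)^2$ together with the scaling $\Ext_{\G(t,\lambda_n,Y_n)}(\mu) = e^{-2t}\Ext_{Y_n}(\mu)$ of the vertical foliation along a Teichm\"uller geodesic yields
\[
\Ext_{\G(t,\lambda_n,Y_n)}(\alpha_n) \geq \frac{i(\mu,\alpha_n)^2 \, e^{2t}}{\Ext_{Y_n}(\mu)}.
\]
Taking half-logs of the ratio and using Kerckhoff's formula $d_\T(X,Y) \geq \frac{1}{2}\log(\Ext_Y(\alpha)/\Ext_X(\alpha))$ gives
\[
d_\T\bigl(\gr(e^{2t}\lambda_n,X_n),\,\G(t,\lambda_n,Y_n)\bigr) \geq t \;-\; \tfrac{1}{2}\log\!\bigl(\ell_n\,\Ext_{Y_n}(\mu)\bigr) \;+\; O(1).
\]

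The conclusion follows by a dichotomy on the adversarial choice of $Y_n$. If $\ell_n\,\Ext_{Y_n}(\mu)$ is bounded in $n$, the displayed lower bound is $\geq t - O(1)$, so $\sup_t$ is infinite already for any fixed large $n$. Otherwise $\Ext_{Y_n}(\mu)$ grows faster than $\Ext_{X_n}(\mu) \asymp 1/\ell_n$, and Kerckhoff's formula forces $d_\T(Y_n, X_n) \to \infty$; since the small grafting weight $1/L_n \to 0$ makes $\gr(\lambda_n,X_n)$ stay at bounded distance from $X_n$, the $t=0$ comparison $d_\T(\gr(\lambda_n,X_n), Y_n)$ blows up. Either way $\sup_{n,t}$ diverges.

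The main technical obstacle is verifying the ancillary estimates $\Ext_{X_n}(\mu) \asymp 1/\ell_n$ and $d_\T(\gr(\lambda_n,X_n), X_n) = O(1)$. The former comes from the dominant contribution of the collar of $\alpha_n$ to the extremal length of a lamination with positive geometric intersection with $\alpha_n$ (via Minsky's combinatorial extremal length formula, or directly by producing an admissible metric concentrated on the collar). The latter requires control on the grafting deformation as the weight tends to zero; this is where the Lipschitz property from the abstract, applied with $1/L_n\to 0$, does the work. All other ingredients (Gardiner's inequality, the $e^{-2t}$ scaling of the vertical extremal length, monotonicity under conformal inclusion, Maskit's comparison $\Ext_{X_n}(\alpha_n) \asymp \ell_n$) are standard.
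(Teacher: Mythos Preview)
Your argument has a genuine gap at the first key estimate. The claim that ``grafting conformally embeds $X_n$ into $\gr(e^{2t}\lambda_n,X_n)$'' is false. For grafting along a weighted simple closed curve $c\gamma$, only the open surface $X\setminus\gamma$ embeds conformally in $\gr(c\gamma,X)$, not $X$ itself; for a general lamination $\lambda$ the conformally embedded piece is $X\setminus\lambda$. Since your $\mu$ is filling, $\alpha_n$ necessarily intersects $\mu$, so $\alpha_n$ is not contained in the conformally embedded part and the monotonicity argument for extremal length does not apply. In fact the conclusion $\Ext_{\gr(e^{2t}\lambda_n,X_n)}(\alpha_n)\lesssim\ell_n$ for all $t$ is itself false: since $\I(\alpha_n,\mu)>0$, the grafting ray in direction $[\mu]$ eventually drives $\Ext(\alpha_n)\to\infty$ (for fixed $n$ the grafting ray fellow-travels a Teichm\"uller ray with vertical foliation $[\mu]$, along which the extremal length of anything with positive intersection with $\mu$ tends to infinity).

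This tension cannot be removed by tweaking the construction while keeping a single lamination class: if you instead take $\mu$ disjoint from $\alpha_n$ so that the collar of $\alpha_n$ really does embed conformally, then $\I(\mu,\alpha_n)=0$ and your second (Minsky/Gardiner) inequality gives nothing. The paper's construction is designed precisely to sidestep this dichotomy. It takes a separating curve $\gamma$ with complementary pieces $R,R'$, chooses $\lambda_n=\tfrac{1}{n}\lambda+\tfrac{n-1}{n}\lambda'$ with $\lambda\subset R$, $\lambda'\subset R'$, and compares the two rays at the specific time $t=\tfrac{1}{2}\log n$. Because the weight on $R$ is only $1/n$, grafting barely moves the conformal structure on $R$ at that time, while Rafi's projection theorem (applied via Minsky's product region theorem) shows the Teichm\"uller geodesic has moved $\pi_R$ by roughly $\tfrac{1}{2}\log n$. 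The asymmetry between how grafting and Teichm\"uller flow distribute their effect across subsurfaces is what produces the divergence; a single filling direction cannot exhibit this.

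A secondary issue: your justification of $d_\T(\gr(\lambda_n,X_n),X_n)=O(1)$ via the Lipschitz theorem is not valid as stated. That theorem bounds $d_\T(\gr(\lambda,X),\gr(\lambda,Y))$ in terms of $d_\T(X,Y)$ for \emph{fixed} $\lambda$; it says nothing about $d_\T(\gr(\lambda,X),X)$ as $\lambda\to 0$ with $X$ varying and leaving every thick part. Indeed, unit-length grafting need not move a thin surface only a bounded amount.
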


We now outline the proof of \thmref{thm:Main}.  The main construction
(carried out in \secsref{sec:Dual}{sec:TheMap}, culminating in \propref{prop:TheMap}) produces an explicit family of quasiconformal maps
between the Riemann surfaces along a grafting ray and those of a
\Teich geodesic ray starting from another point $Y \in \T(S)$.  This
is done in the case that $S$ has no punctures.  Unfortunately, the
quasiconformal constant for these maps and the distance $d_\T(X,Y)$
depend on the pair $(X,\lambda)$, whereas for the main theorem we
seek a uniform upper bound.

Such a bound is derived from the construction in several steps. First,
we show that there exist points $\Xstd \in \T(S)$ for which the
quasiconformal constants are uniform over an open set in $\ML(S)$
(\secref{sec:TheMap}).  Then, using the action of the mapping class
group and the co-compactness of the $\epsilon$--thick part of
$\T(S)$, we show that for any $\epsilon$--thick surface $X$ and any
$\lambda \in \ML(S)$ there exists $\Xstd$ near $X$ 
for which the uniform estimates
apply to $(\Xstd, \lambda)$.

At this point we have proved the main theorem up to 
moving the base points of both the grafting ray and the \Teich
geodesic ray by a bounded distance from the given $X \in \T(S)$. 
The proof is concluded by showing that both the grafting and \Teich
rays starting from these perturbed
basepoints fellow travel those starting from the original point $X$.

For the Teichm\"uller ray case, we use a recent theorem of Rafi
\cite{rafi3} (generalizing earlier results of \cite{masur} and
\cite{ivanov}) which states that \Teich geodesics with the same
vertical foliation fellow travel, with a bound on the distance depending
only on the thickness $\epsilon$ and the distance between the starting points.

It remains to show that grafting rays in a given direction $\lambda$
fellow travel.  In \secref{sec:Lipschitz} we show that
$\lambda$--grafting defines a self-map of \Teich space that is
uniformly Lipschitz with respect to the \Teich metric, and since the
Lipschitz constant is independent of $\lambda$, the fellow traveling
property of grafting rays follows.  The key to this Lipschitz bound is
a certain extension of grafting to quasi-Fuchsian groups.

In order describe this extension, we regard grafting as a map $$\gr:
\ML(S) \times \F(S) \to \T(S),$$where $\ML(S)$ is the space of
measured laminations on $S$ and $\F(S) \simeq \T(S)$ is the
realization of \Teich space as the set of marked Fuchsian groups,
which is a real-analytic manifold parameterizing hyperbolic structures
on $S$.  By a construction of Thurston, this map lifts to a
projective grafting map $\Gr \from \ML(S) \times \F(S) \to \P(S)$, where
$\P(S)$ is the space of marked complex projective structures.

Interpreting \Teich space as the ``diagonal'' in quasi-Fuchsian space
$\QF(S) \simeq \T(S) \times \bar{\T(S)}$, we show that projective
grafting extends to a holomorphic map defined on a uniform metric
neighborhood of $\F(S)$ in $\QF(S)$.  Here we give $\QF(S)$ the
Kobayashi metric, which is the sup-product of the Teichm\"uller
metrics on $\T(S)$ and $\bar{\T(S)}$, and we show:

\begin{introthm}
\label{thm:extension}
There exists $\Extension > 0$ such that projective grafting extends to
a map $\Gr \from \ML(S) \times \QF_\Extension(S) \to \P(S)$ that is
holomorphic with respect to the second parameter, where
$\QF_\delta(S)$ is the open $\delta$--neighborhood of $\F(S)$ with
respect to the Kobayashi metric on $\QF(S)$.
\end{introthm}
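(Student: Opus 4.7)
The plan is to construct the extension explicitly, by generalizing Thurston's pleated-plane and bending-cocycle description of projective grafting from Fuchsian to quasi-Fuchsian representations, and then to establish holomorphic dependence on $\rho$ together with a \emph{uniform} local injectivity statement.

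First, I would recall that for $\rho \in \F(S)$ and $\lambda \in \ML(S)$ the projective structure $\Gr(\lambda,\rho)$ admits the following concrete model: an equivariant pleated plane $P_\lambda^\rho \from \tilde S \to \hyp^3$, bent along $\lambda$ via a $\PSL(2,\C)$-valued bending cocycle $B_\lambda^\rho$ assembled from the $\rho$-axes of leaves of $\lambda$ and the transverse measure. The developing map $f_\lambda^\rho \from \tilde S \to \CP^1$ of the projective structure is then produced from $P_\lambda^\rho$ by Epstein's envelope-at-infinity construction. The key observation enabling the extension is that every ingredient of this construction continues to make sense for arbitrary $\rho \in \QF(S)$: each leaf of $\lambda$ has a canonical geodesic realization in the quasi-Fuchsian $3$-manifold $\hyp^3/\rho(\pi_1(S))$, so the cocycle $B_\lambda^\rho$, the pleated plane, and its envelope are all defined. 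Running the same recipe but starting from the Bers (quasi-Fuchsian) developing map at infinity in place of the round Fuchsian circle produces a candidate map $f_\lambda^\rho \from \tilde S \to \CP^1$ for every $\rho \in \QF(S)$, together with a bent holonomy $\rho^\lambda \from \pi_1(S) \to \PSL(2,\C)$.

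Next I would verify the two properties required. Holomorphy of $\rho \mapsto f_\lambda^\rho$ is inherited from the holomorphic dependence of the axes of $\rho$ on leaves of $\lambda$ and of the Bers developing map on $\rho \in \QF(S)$, together with the fact that the cocycle and envelope are holomorphic algebraic functions of these inputs. To obtain a genuine projective structure the map $f_\lambda^\rho$ must be a local homeomorphism; this holds on $\F(S)$ by Thurston's theorem, and by openness of immersivity it persists on a neighborhood in $\QF(S)$ that depends \emph{a priori} on $\lambda$.

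The principal obstacle is removing this $\lambda$-dependence to produce a single universal $\Extension > 0$. I would argue as follows: if $\rho$ lies within Kobayashi distance $\delta$ of $\F(S)$, then its two conformal infinities differ by a $K(\delta)$-quasiconformal map, and $\rho$ itself differs from its nearest Fuchsian point by a quasi-isometry of $\hyp^3$ whose constant depends only on $\delta$. Under such a quasi-isometric perturbation the pleated plane deforms with universally controlled distortion, and its envelope-developing map remains a local homeomorphism provided $\delta$ is below a universal threshold. Uniformity over all $\lambda$ would be secured through the mapping-class-group equivariance of the entire construction, which reduces the question to a cocompact regime in the Fuchsian factor. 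This final uniformity step is the most delicate point: the threshold must not degenerate as $\lambda$ becomes topologically complicated or acquires large total mass, and the decisive input will be a quantitative non-collapsing estimate for equivariant pleated planes in $\hyp^3$ whose constants depend only on the ambient hyperbolic geometry and not on the bending lamination.
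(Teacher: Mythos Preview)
Your overall strategy---extend the bending-cocycle description of projective grafting from Fuchsian to nearby quasi-Fuchsian representations---matches the paper's.  But the implementation diverges at a crucial point, and the place you flag as ``the most delicate'' is where your argument has a genuine gap.

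You propose to build the developing map via the pleated plane in $\hyp^3$ and Epstein's envelope, then secure uniformity of $\Extension$ over $\lambda$ through mapping-class-group equivariance plus an unspecified ``quantitative non-collapsing estimate for equivariant pleated planes.''  The equivariance step does not do what you want: even after reducing the Fuchsian basepoint to a fundamental domain (which is itself noncompact), the lamination $\lambda$ still ranges over all of $\ML(S)$, and in particular its transverse mass is unbounded.  So you have not reduced to any compact parameter space, and the non-collapsing estimate you invoke is exactly the uniform statement that remains to be proved.  Without it, your argument yields only a $\lambda$-dependent neighborhood, which the paper explicitly notes is easy (via real-analyticity or local lifting of Kourouniotis' bending through $\hol$) and insufficient.

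The paper avoids three-dimensional pleated-plane estimates entirely.  It works on the Riemann-sphere side: for a weighted simple curve $t\gamma$ and $\rho=Q(X,Y)\in\QF_\Extension(S)$, it takes the ordinary grafted surface $Z=\gr(t\gamma,X)$, composes the collapse-then-Riemann-map $f_0:\tilde Z\to\Omega$ with the bending cocycle $\hat\beta$, and observes that the resulting map is a local homeomorphism with Beltrami coefficient supported on the grafting annuli.  The uniform bound $|\mu|\le\tfrac13$ comes from a purely two-dimensional lemma about $K$-quasidisks with $K$ close to $1$ (Lemma~\ref{lem:universal-K}): the elliptic-rotation map around a Poincar\'e geodesic of such a quasidisk is locally $2$-quasiconformal, and this is proved by Carath\'eodory convergence of normalized quasidisks to $\hyp$ as $K\to 1$.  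That lemma supplies $\Extension$ outright, with no reference to $\lambda$ at all.  Solving the Beltrami equation then gives a genuine projective structure; holomorphy in $\rho$ is obtained not by direct inspection but by writing $\Gr_{t\gamma}=\hol^{-1}\circ B_{t\gamma}$ locally, using continuity to select the branch.  Finally, general $\lambda$ is handled by taking limits $c_n\gamma_n\to\lambda$ and establishing normality of the family $\{\Gr_{c_n\gamma_n}\}$ via a compactness criterion for projective structures possessing an injective $\delta$-disk (Lemma~\ref{lem:compactness} and Corollary~\ref{cor:injectiveball}).

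In short: your three-dimensional route might be made to work, but the uniformity ingredient you would need is at least as hard as the theorem itself, whereas the paper isolates the uniformity into a short quasidisk lemma and handles general laminations by a limiting argument rather than a direct construction.
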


Composing this map $\Gr$ with the forgetful map $\pi \from \P(S) \to
\T(S)$, we also obtain a map $\gr = \pi \circ \Gr \from
\QF_\Extension(S) \to \T(S)$, holomorphic in the first factor, which
is the extension that we use in the proof of \thmref{thm:Main}.
Note that the original grafting map $\gr \from \ML(S) \times \T(S) \to \T(S)$ is
\emph{not} holomorphic with respect to the usual complex
structure on $\T(S)$; the holomorphic behavior described in
\thmref{thm:extension} can only be seen by considering \Teich space as a totally real submanifold of $\QF(S)$.

We remark that the existence of a local holomorphic extension of
$\Gr(\lambda, \param)$ (or $\gr(\lambda,\param)$) to a neighborhood of a
point in $\F(S)$ follows easily from results of Kourouniotis or
Scannell--Wolf (see \secref{subsec:extension} for details), but that
extension to a uniform neighborhood of $\F(S)$ (i.e.~the existence of
$\Extension$) is essential for application to \thmref{thm:Main} and
does not follow immediately from such local considerations.

Using the holomorphic extension of grafting and the contraction of
Kobayashi distance by holomorphic maps, we then establish the
Lipschitz property for grafting:

\begin{introthm}
\label{thm:Lipschitz}
There exists a constant $\Lipschitz$ such that for any measured lamination
$\lambda \in \ML(S)$, the grafting map $\gr_\lambda \from \ts \to \ts$ is
$\Lipschitz$--Lipschitz. That is, given any two 
points $X$ and $Y$ in $\ts$, we have
$$
d_{\calT} \big(\gr(\lambda,X),\gr(\lambda, Y) \big) 
  \leq \Lipschitz \, d_{\calT}(X,Y).
$$
\end{introthm}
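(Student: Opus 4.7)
The plan is to combine Theorem C with the fundamental fact that holomorphic maps contract the Kobayashi metric. Fix $\lambda \in \ML(S)$; by Theorem C the map $\gr_\lambda \from \QF_\Extension(S) \to \T(S)$ is holomorphic, and the Kobayashi metric on $\T(S)$ agrees with $d_\calT$. Kobayashi contraction immediately gives
\[
d_\calT\bigl(\gr_\lambda(A),\gr_\lambda(B)\bigr) \;\le\; d_{K,\QF_\Extension(S)}(A,B)
\]
for all $A, B \in \QF_\Extension(S)$, so it suffices to show that for $X, Y \in \F(S)$ the Kobayashi distance in $\QF_\Extension(S)$ is bounded by $\Lipschitz \cdot d_\calT(X,Y)$ for some universal constant $\Lipschitz = \Lipschitz(\Extension)$.

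To produce such a bound I would find a single holomorphic disk in $\QF_\Extension(S)$ containing both $X$ and $Y$ on its Fuchsian real slice. Let $\phi \from \D \to \T(S)$ be the Teichm\"uller embedding of a disk through $X$ and $Y$, normalized so that $X = \phi(x_1)$ and $Y = \phi(x_2)$ for real $x_1, x_2 \in (-1,1)$ with $d_\D(x_1,x_2) = d_\calT(X,Y)$. Using the product description $\QF(S) \simeq \T(S) \times \bar{\T(S)}$, define
\[
F \from \D \to \QF(S), \qquad F(z) = \bigl(\phi(z),\,\overline{\phi(\bar z)}\bigr),
\]
where the second coordinate is regarded as a point of $\bar{\T(S)}$. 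Because $\bar{\T(S)}$ carries the complex structure opposite to that of $\T(S)$, the map $F$ is holomorphic, and on the real segment one has $F(x) = (\phi(x),\overline{\phi(x)}) \in \F(S)$. Since the Kobayashi metric on $\QF(S)$ is the sup-product of Teichm\"uller metrics, a short computation yields
\[
d_{K,\QF(S)}\bigl(F(z),\F(S)\bigr) \;\le\; \tfrac{1}{2}\,d_\calT\bigl(\phi(z),\phi(\bar z)\bigr) \;=\; \tfrac{1}{2}\,d_\D(z,\bar z) \;=\; d_\D(z,\R \cap \D),
\]
the optimal nearby Fuchsian point being the midpoint of the Teichm\"uller geodesic between the two coordinates of $F(z)$, and the equalities using that $\phi$ is an isometric holomorphic embedding. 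Consequently, if $T_\Extension = \{z \in \D : d_\D(z,\R \cap \D) < \Extension\}$ denotes the hyperbolic $\Extension$-tube about the real axis in $\D$, then $F(T_\Extension) \subset \QF_\Extension(S)$.

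Composing yields a holomorphic map $\gr_\lambda \circ F|_{T_\Extension} \from T_\Extension \to \T(S)$, and Kobayashi contraction now reduces the theorem to bounding $d_{K,T_\Extension}(x_1,x_2)$ by a constant multiple of $d_\D(x_1,x_2)$ for real $x_1, x_2$. This is the only non-formal step in the plan, and it follows from a symmetry argument: the hyperbolic isometries of $\D$ preserving $\R \cap \D$ act on $T_\Extension$ by holomorphic automorphisms and transitively on the real axis, so the ratio of the Kobayashi density of $T_\Extension$ to the hyperbolic density of $\D$, evaluated at a real point on a vector tangent to the real axis, is a single finite positive number $\Lipschitz = \Lipschitz(\Extension)$; integrating along the real axis gives $d_{K,T_\Extension}(x_1,x_2) \le \Lipschitz \, d_\D(x_1,x_2)$. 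The main obstacle is really absorbed into Theorem C itself: the size $\Extension$ of the neighborhood must be a positive constant independent of $\lambda$ and of the basepoint, for only then does the tube $T_\Extension$ have positive width and the constant $\Lipschitz(\Extension)$ remain finite and universal. Pointwise holomorphic extensions would produce no uniform Lipschitz bound of this form.
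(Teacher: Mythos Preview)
Your argument is correct and uses the same essential ingredients as the paper --- the holomorphic extension of \thmref{thm:extension} together with Kobayashi contraction --- but packages them somewhat differently.  The paper restricts to nearby $X,Y$ (Lipschitz being a local condition), then uses \emph{two} coordinate disks: the holomorphic map $z\mapsto Q(X,\kappa(z))$ from a disk of radius $r_1$ into $\QF_\Extension(S)$ gives $d_\T(\gr_\lambda(X),\gr_\lambda(Q(X,Y)))\le C\,d_\T(X,Y)$, and the analogous map varying the first coordinate gives $d_\T(\gr_\lambda(Q(X,Y)),\gr_\lambda(Y))\le C\,d_\T(X,Y)$; the triangle inequality through the intermediate quasi-Fuchsian point $Q(X,Y)$ yields $\Lipschitz=2C$.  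Your symmetric disk $F(z)=(\phi(z),\overline{\phi(\bar z)})$ instead lands the whole tube $T_\Extension$ in $\QF_\Extension(S)$ at once, and the transitivity of the automorphism group of $T_\Extension$ along the real axis turns the infinitesimal Kobayashi comparison into a global one without any localization step.  Your route is a little slicker (one disk, no intermediate point, global from the start); the paper's has the minor virtue that each disk varies only one factor, so one never needs to think about the Poincar\'e metric of a tube.  Either way the substance is the same: the uniformity of $\Extension$ in \thmref{thm:extension} is exactly what makes the constant $\Lipschitz(\Extension)$ finite and independent of $\lambda$, as you correctly emphasize.
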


In \secref{sec:Conclusion} we combine the rectangle construction with
the fellow traveling properties for grafting and \Teich rays to derive
the main theorem for compact surfaces.  In \secref{sec:Punctures} we
show how the preceding argument can be modified to prove
\thmref{thm:Main} in the case $S$ has punctures.

Finally, in \secref{sec:Example} we construct an example illustrating
\thmref{introthm:Example}.

\subsection*{Shadows in the curve complex}
Given any point $X \in \ts$ and a projective
class $[\lambda]$ of a measured geodesic lamination on $X$, there are
different ways to geometrically define a ray which starts at $X$ and
``heads in the direction of $[\lambda]$''; examples include the \Teich
ray, the grafting ray, and the line of minima \cite{kerckhofflom}.
Given any path in \Teich space, by taking the shortest curve on each
surface, we get a path in the complex $\mathcal C(S)$ of curves of
$S$, which is often called the {\em shadow} of the original path.
Masur and Minsky showed \cite{masur-minsky} that the shadow of a
\Teich geodesic is an unparameterized quasi-geodesic in $\mathcal
C(S)$.  A consequence of \thmref{thm:Main} is that the shadow of a
grafting ray remains a bounded distance in $\mathcal C(S)$ from the
shadow of a \Teich geodesic ray. Hence, it follows that the same is
true of the grafting ray.  In the case of a line of minima, though it
may not remain a bounded distance from any \Teich geodesic, it was
shown \cite{crs1} that its shadow fellow-travels that of its
associated \Teich geodesic.  It is interesting that although these
paths are defined in rather different ways, at the level of the curve
complex, they are essentially the same.

\subsection*{Related results and references}
Using a combinatorial model for the Teichm\"uller metric, D\'{i}az and
Kim showed that the conclusion of \thmref{thm:Main} holds for grafting
rays of laminations supported on simple closed geodesics
\cite{diaz-kim}.  However, the resulting bound on distance depends on
the geometry of the geodesics in an essential way, obstructing the
extension of their method to more general laminations by a limiting
argument.

Grafting rays were also studied by Wolf and the second author in
\cite{dumas-wolf}, where it was shown that for any $X \in \T(S)$, the
map $\lambda \mapsto \gr(\lambda,X)$ gives a homeomorphism between
$\ML(S)$ and $\T(S)$.  In particular, \Teich space is the union
of the grafting rays based at $X$, which are pairwise disjoint.
 In light
of \thmref{thm:Main}, we find that this ``polar coordinate system''
defined using grafting is a bounded distance from the Teichm\"uller
exponential map at $X$.

\subsection*{Acknowledgments}

Some of this work was completed at the Mathematical Sciences Research
Institute during the Fall 2007 program ``Teichm\"uller Theory and
Kleinian Groups''.  The authors thank the institute and the organizers
of the program for their hospitality.  They also thank the referees
for helpful comments which improved the paper.

\section{The Orthogonal Foliation to a Lamination}
\label{sec:Dual}

Throughout \secsref{sec:Dual}{sec:Conclusion} we assume that $S$ has
no punctures. In this section we construct, for every $X \in \T(S)$
and every measured lamination $\lambda$, a measured foliation
$\horizontal(\lambda,X)$ orthogonal to $\lambda$ in $X$.  This is a
kind of approximation for the horizontal foliation of $\G(t,\lambda,
X)$, which we do not explicitly know.  In the case where $\lambda$ is
maximal (that is, the complement of $\lambda$ is a union of ideal
triangles) the measured foliation is equivalent to the horocyclic
foliation constructed by Thurston in \cite{thurston:minimal-stretch}.

\subsection*{A measured foliation orthogonal to $\lambda$} \label{sec:nu} 

Let $g$ be a geodesic in $\hyp^2$. Consider the closest point  projection map onto $g$,
which takes each point in $\hyp^2$ to the point on $g$ to which it is closest.
The fibers of the projection foliate $\hyp^2$ by geodesics
perpendicular  to $g$. Analogously,  if $\{g_i\}$  is a collection of
disjoint  geodesics, then the closest point projection to $\cup g_i$ is well-defined,
except  at the points that are equidistant
to two  or more geodesics in $\{g_i\}$. These points form a (possibly disconnected) graph  
where the edges are geodesic segments, rays, or lines. 
The fibers of the projection foliate $\hyp^2$ by piecewise
geodesics.  

The lamination $\lambda$ lifts to a set 
$\tilde \lambda$ of disjoint infinite geodesics which is invariant under deck 
transformations. 
 Let $\tilde \graph$ be 
the graph of points where the closest point projection to $\tilde \lambda$ 
is not well-defined and let $\graph$ be the projection of $\tilde
\graph$ to $X$. We call
the set $\graph=\graph(\lambda,X)$ the {\em singular locus} of the closest point 
projection
map.  As above, the fibers defined by the projection provides 
a foliation of $\hyp^2$ which projects down to a
foliation of 
$X$. Thus we obtain a singular foliation 
$\horizontal=\horizontal(\lambda, X)$ on $X$ orthogonal to $\lambda$.  The foliation has
singularities at 
the vertices of $\graph$, where the number of prongs at a singularity 
coincides with the valence of the vertex. The leaves of $\horizontal$ are piecewise 
geodesics whose non-smooth points lie on $\graph$.  For later
purposes, we prefer to maintain the non-smooth structure of $\horizontal$
along $\graph$. A leaf of $\horizontal$ that joins two vertices is
called a {\em saddle connection}.

\begin{proposition} \label{prop:hyp-dual}
The hyperbolic arc-length along $\lambda$ induces a transverse measure on 
$\horizontal(\lambda,X)$.
\end{proposition}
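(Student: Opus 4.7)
The plan is to push forward hyperbolic arc-length on $\tilde\lambda$ via the closest point projection. In the universal cover, the closest point projection $\tilde\pi \from \hyp^2 \setminus \tilde\graph \to \tilde\lambda$ is well-defined and $1$--Lipschitz on each complementary region, and its fibers are exactly the smooth pieces of leaves of the lifted foliation $\tilde\horizontal$. For a transverse arc $\tilde\alpha$ to $\tilde\horizontal$ that meets $\tilde\graph$ in only finitely many points, all in the interiors of edges, I split $\tilde\alpha$ at these crossings into subarcs $\tilde\alpha_i$, each contained in a single region of $\hyp^2 \setminus \tilde\graph$ projecting to a single geodesic $g_{j(i)} \subset \tilde\lambda$, and define
\[
\tilde\nu(\tilde\alpha) \;:=\; \sum_{i} \ell\bigl(\tilde\pi(\tilde\alpha_i)\bigr),
\]
where $\ell$ denotes hyperbolic arc-length. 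Transversality forces $\tilde\pi|_{\tilde\alpha_i}$ to be a strictly monotone map into $g_{j(i)}$, so each term is bounded by the hyperbolic length of $\tilde\alpha_i$, and the total is finite.

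The key point is the invariance of $\tilde\nu$ under transverse homotopies that slide endpoints along leaves of $\tilde\horizontal$. Inside a single complementary region, $\tilde\pi$ is constant on leaves, so invariance is automatic there. The essential step is at an edge $e$ of $\tilde\graph$ separating two regions $R_i, R_j$ projecting to $g_i, g_j$: I need to show that as the crossing point of $\tilde\alpha$ slides along $e$, the length added on the $R_i$-side cancels the length added on the $R_j$-side. This reduces to the observation that $e$ is equidistant from $g_i$ and $g_j$, so reflection of $\hyp^2$ across $e$ is an isometry swapping $g_i \leftrightarrow g_j$ and intertwining the respective closest point projections; hence a displacement of distance $\delta$ along $e$ produces equal displacements on $g_i$ and on $g_j$, yielding the required cancellation. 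I expect this symmetry at regular edge points to be the main geometric input, and verifying it carefully is the main obstacle in setting up the measure.

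Once invariance is in place for transverse arcs meeting only regular points of $\tilde\graph$, one extends by approximation to all transverse arcs, bounding the contribution near a vertex of $\tilde\graph$ by the hyperbolic length of the arc there via the $1$--Lipschitz property. Since both $\tilde\pi$ and hyperbolic length are $\pi_1(S)$--equivariant, the construction descends to a transverse measure $\nu$ on $\horizontal(\lambda, X)$. Because $\tilde\pi$ restricts to the identity on $\tilde\lambda$, the measure $\nu$ restricted to an arc contained in a leaf of $\lambda$ equals its hyperbolic length, so $\nu$ is indeed the transverse measure induced by hyperbolic arc-length along $\lambda$.
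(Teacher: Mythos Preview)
Your proposal is correct and follows essentially the same approach as the paper: both define the measure by lifting a transverse arc to $\hyp^2$, cutting at $\tilde\graph$, and summing the hyperbolic lengths of the closest-point projections to $\tilde\lambda$, and both rely on the equidistance of an edge of $\tilde\graph$ from its two adjacent leaves (your reflection argument) to make the definition consistent. You are more explicit than the paper about verifying invariance under transverse isotopy; the paper simply records the equidistance observation and leaves the rest to the reader.
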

\begin{proof}
Let $\eta$ be a smooth embedded arc in $X$. 
 First suppose that 
the interior of $\eta$ is contained in a component of $X \setminus \graph$ 
(the endpoints of $\eta$ may be contained in $\graph$) and that at every
  interior point, $\eta$ is transverse to $\horizontal$.  Let $\Tilde{\eta}$ be a
  lift of $\eta$. Then the closest point projection onto $\tilde
  \lambda$ projects $\Tilde{\eta}$ to an arc on a leaf of $\tilde \lambda$.
  (In the case that an endpoint of $\Tilde{\eta}$ is in $\tilde \graph$, project
  the endpoint to the same leaf of $\tilde \lambda$ as the interior
  points.)  Define the measure on $\eta$ to be the length of this arc.
  In the case that $\eta$ is contained in $\graph$, observe that although
  the closest point projection of $\Tilde{\eta}$ to $\tilde \lambda$ is not
  well-defined, any choice of projection has the same length because
  $\tilde \graph$ is equidistant to the corresponding leaves of $\tilde
  \lambda$.  Define the measure on $\eta$ to be the length of this
  arc.  If the interior of $\eta$ intersects $\graph$ at a point $p$, we
  say $\eta$ is {\em transverse} to $\horizontal$ at $p$ if, on a small circle
  $C$ centered at $p$, the points of $\eta \cap C$ separate the points
  of $\ell \cap C$, where $\ell$ is the leaf of $\horizontal$ through $p$. In
  general, if $\eta$ is transverse to $\horizontal$ at every point, we define
  the $\horizontal$-measure of $\eta$ to be the sum of the 
  $\horizontal$--measures of the subarcs $\cup [\eta \cap (X \setminus \theta)]$ 
  and any subarcs in $\eta \cap
  \theta$.  In this way, we equip $\horizontal$ with a transverse measure that
  coincides with arc-length along $\lambda$.
  \end{proof} 
   
   Note that neither $\horizontal$ nor its transverse measure depends in any way on the
measure on $\lambda$.

\section{Rectangle decomposition}
\label{sec:decomposition}
We now describe a decomposition of $X$ into \emph{rectangles} using 
the orthogonal foliation $\horizontal=\horizontal(\lambda,X)$. 

Choose an arc $\omega$ contained in $\theta$ that contains no
singularities of $\theta$.
For every point $p$ on $\omega$ and a choice of normal direction to $\omega$, 
consider the arc of $\horizontal$ starting from $p$ in that direction.
By  Poincar\'e recurrence (see for example \cite[\S 5.1]{FLP}), this arc either
ends at a singular point of $\theta$ or  intersects
$\omega$ again.  We call such an arc \emph{exceptional} if it intersects a singular 
point of $\theta$ or an endpoint of $\omega$ before intersecting the interior of 
$\omega$.  In particular we consider any arc starting from an endpoint of $\omega$ 
to be exceptional.  Let $P \subset \omega$ denote the set of endpoints of 
exceptional arcs.  Label the normal directions of $\omega$ as $n_1,n_2$.  Then 
$P = P_1 \cup P_2$, where $P_i$ corresponds to endpoints of exceptional arcs 
in the normal direction $n_i$. 

The first return map of $\horizontal$ is defined on the set of pairs $(p,n_i)$ where 
$p \in \omega$, $i \in \{1,2\}$, and $p \notin P_i$.  That is, the first return map is 
naturally a self-map of
$$
\big(\omega \setminus P_1\big) \times \{n_1 \} \cup
\big(\omega \setminus P_2 \big) \times \{n_2 \}.
$$
An open interval of $\omega \setminus P_i$ flows along $\horizontal$ in the direction 
$n_i$ until it returns to another (possibly overlapping) interval of $\omega$ 
sweeping out a \emph{rectangle} that has two edges attached to $\omega$ 
(see Figure~\ref{fig:rectangle}).  
We refer to it as a rectangle, despite the fact that the edges along $\horizontal$
are jagged, because the endpoints of the edges which are attached to
$\omega$ give four distinguished points on the boundary. We call these
points the \emph{vertices} of the rectangle.
\begin{figure}[tb]
\begin{center}
\includegraphics{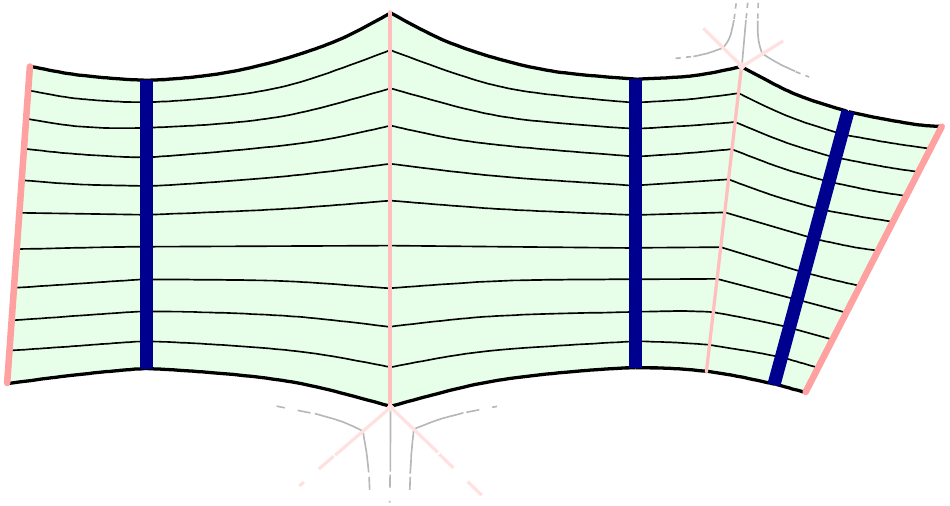}
\caption{A rectangle $R$ foliated by leaves of $\horizontal$ in the case
  where $\lambda$ is a weighted multicurve (of which three segments
  are shown in bold).  The leaves of $\horizontal$ are piecewise geodesics, smooth
  away from $\graph$ and orthogonal to $\lambda$ at each
  intersection. The left and right edges of $R$ are contained in $\omega$.
  \label{fig:rectangle}}
\end{center}
\end{figure}

If $\horizontal(\lambda, X)$ has no saddle connections, 
this decomposes $X$ into a union of rectangles.  If $\horizontal(\lambda, X)$ has saddle connections, then
the union of the rectangles may only be a subsurface of $X$, whose
boundary is made of saddle connections.  
We will however, assume below in (H1) that $\horizontal(\lambda, X)$ has no saddle connections.

The interiors of the rectangles are disjoint and contain no singularities.
For every rectangle $R$, we call the pair of opposite edges that 
are sub-arcs of $\omega$ the \emph{vertical edges} of $R$.
The refer to the other pair of opposite edges,
that are sub-arcs of leaves of $\horizontal$, as the \emph{horizontal edges} of $R$. 
The rectangle decomposition obtained from a transversal
$\omega$ in this way will be denoted by $\mathcal R(\omega,\lambda,X)$.

\subsection{Topological stability of the decomposition}
\label{sec:stability}
 
Suppose $\mu$ is a maximal measured lamination, that is, the complement of 
$\mu$ is a union of ideal triangles. The foliation $\mathcal H(\mu,X)$ has a 
three-prong singularity at the center of each ideal triangle.  If $\lambda$ is close 
to $\mu$ in the usual weak topology of $\ML(S)$ then, because $\mu$ is maximal, 
the lamination $\lambda$ is also close to $\mu$ in the Hausdorff
topology (see \cite[pp.~24--25]{thurston:minimal-stretch}
\cite{weiss}).  Since $\mathcal H (\lambda,X)$ varies
continuously with the support of $\lambda$, the singularities of
$\mathcal H(\lambda, X)$ remain isolated from one another and are the same in
number and type.  Similarly, the part of the singular graph
$\theta(\mu,X)$ that lies outside a small neighborhood of the support of
$\mu$ will be close (in the $C^1$ topology of embedded graphs) to
the corresponding part of $\theta(\lambda,X)$. 

We emphasize that the constructions above are not
continuous in any {\em neighborhood} of $\mu$ in the measure topology
of $\ML(S)$; rather, maximality of $\mu$ implies continuity at
$\mu$, since for maximal laminations, convergence in measure and
in the Hausdorff sense are the same.

Let us further assume that $\horizontal(\mu,X)$ and $\omega$ satisfy:
\begin{enumerate}
\item[(H1)] The foliation $\horizontal(\mu,X)$ has no saddle connections 
(and in particular, it is minimal). 
\item[(H2)] The horizontal sides of rectangles in $\calR(\omega,\mu, X)$
  containing the endpoints of $\omega$ do not meet the singularities 
  of $\horizontal(\mu,X)$.
\end{enumerate}
Then we can conclude that for $\lambda$ sufficiently close to
$\mu$ and an arc $\omega_\lambda \subset \theta(\lambda,X)$ sufficiently close 
to $\omega$, the rectangle decomposition $\calR(\omega_\lambda,\lambda,X)$ is 
well-defined and is topologically equivalent (i.e.~isotopic) to 
$\calR(\omega,\mu,X)$. First, note that $\omega_\lambda$ is still disjoint from
$\lambda$ and transverse to $\horizontal(\lambda,X)$.
Moreover, the condition (H1) ensures that every point in $P_1$
and $P_2$ corresponds to a unique point in $\omega_\lambda$.
(Note that if $\horizontal(\mu,X)$ had a saddle connection, both saddle
points would project to the same point in $\omega$. But the
corresponding points in $\horizontal(\lambda,X)$ may project to different points 
in $\omega_\lambda$.) Thus the rectangle decompositions 
$\calR(\omega,\mu,X)$ and $\calR(\omega_\lambda,\lambda,X)$ are topologically 
equivalent.

In order to analyze rectangle decompositions for laminations near a
given one, it will be convenient to work with an open neighborhood $U
\subset \ML(S)$ of $\mu$ and to extend the transversal $\omega$
to a family of transversals $\{ \omega_\lambda: \lambda \in U\}$.  
We require that this family satisfy the conditions:
\begin{enumerate}
\item[(T1)] For each $\lambda \in U$, the arc $\omega_\lambda$ lies in
  the singular locus $\theta(\lambda,X)$, and its endpoints
  are disjoint from the vertices of $\theta(\lambda,X)$.

\item[(T2)] The family of transversals is \emph{continuous at
  $\mu$}, meaning that for any $\lambda_n \in U$ such that
  $\lambda_n \to \mu$ in the measure topology, the transversals
  $\omega_{\lambda_n}$ converge to $\omega = \omega_\mu$ in the 
  $C^1$ topology.
\end{enumerate}
Note that for any maximal lamination $\mu \in \ML(S)$, we can
start with a transversal $\omega$ in an edge of its associated singular locus
and construct a family satisfying the conditions above on some neighborhood 
of $\lambda$ in $\ML(S)$.   For example, we can take $\omega_\lambda$ to
be the arc in $\theta(\lambda,X)$ whose endpoints are closest to those of
$\omega$.  The $C^1$ convergence of these arcs as $\lambda \to
\mu$ follows from the convergence of the singular graphs, once we
choose the neighborhood of $\lambda$ in $\ML(S)$ so that the original arc
$\omega_\mu$ has a definite distance from the support of any
lamination in the neighborhood.

\subsection{Geometric stability of the decomposition}
\label{subsec:Geometric}
To quantify the geometry of a rectangle decomposition, rather than its
topology, we introduce parameters describing aspects of the shape of a
rectangle $R \in \calR (\omega,\mu,X)$.
Let $\horizontal=\horizontal(\mu,X)$ and define
$$
\height_R(\mu) =  \text{$\horizontal$--measure of a vertical edge of $R$}.
$$
By construction, the vertical edges of $R$ have the same $\horizontal$-measure, 
so this is well-defined, and is equal to the length of any arc in $R \cap \mu$.

Since we are assuming $\horizontal (\mu, X)$ has no saddle connections,
each horizontal edge of $R$ either contains exactly one singularity 
of $\horizontal$ or does not contain any singularities, but ends at an endpoint of 
$\omega$.  In the former case, the
singularity divides the edge into two \emph{horizontal half-edges}.  Although in the
latter case, the edge is not divided, we nonetheless refer to it as
a horizontal ``half-edge" and include it in
the set $\calI_R$ of horizontal half-edges of $R$.  Define
$$ 
\length_R(\mu) = \max_{I \in \mathcal I_R} \ell(I),
$$ 
where $\ell(I)$ denotes the hyperbolic length of $I$. 

Also define
$$ 
\measure_R(\mu) = \min_{I \in \mathcal I_R} \I(I,\mu),
$$
where $\I(\param,\param)$ is the intersection number with the
transverse measure of $\mu$.

We consider the variation of the rectangle parameters over $U$,
continuing under the assumption that (H1),(H2),(T1),(T2) hold.
By construction, the parameters $\height_R(\mu)$ and
$\length_R(\mu)$ depend continuously on the foliation
$\horizontal(\mu,X)$ and on a compact part of the the singular locus
$\theta(\mu,X)$, both of which vary continuously with the support
of $\mu$ in the Hausdorff topology.  Thus both of these parameters
are continuous at a maximal lamination.  And, $\measure_R(\mu)$ varies
continuously with $\mu$.

For future reference, we summarize this discussion in the following
lemma:
\begin{lemma}
\label{lem:Stability}
Suppose $\mu$ is maximal and $\omega$ is a transversal such that the
pair satisfy (H1) and (H2) above.  Then there is a neighborhood $U$ of
$\mu$ in $\ML(S)$ and a family of transversals satisfying (T1) and (T2)
such that the associated rectangle decompositions are all topologically
equivalent, and  such that for any $\lambda \in U$, we have:
\begin{equation}
\begin{split}
\length_R(\lambda) &< 2\, \length_R(\mu);\\
\measure_R(\lambda) & > \frac{\measure_R(\mu)}{2};\\
\height_R(\lambda) &>  \frac {\height_R(\mu)}2.
\end{split}
\end{equation}
\end{lemma}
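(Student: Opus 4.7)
My plan is to apply the preceding topological-stability discussion to choose a neighborhood $U$ of $\mu$ in $\ML(S)$ and a family $\{\omega_\lambda\}_{\lambda\in U}$ satisfying (T1) and (T2) for which the rectangle decompositions $\calR(\omega_\lambda,\lambda,X)$ are all topologically equivalent to $\calR(\omega,\mu,X)$. Since the surface is closed and the decompositions are combinatorially identical, there are only finitely many rectangles $R$ to consider, so it suffices to show that each of the functions $\lambda \mapsto \length_R(\lambda)$, $\lambda \mapsto \measure_R(\lambda)$, $\lambda \mapsto \height_R(\lambda)$ is continuous at $\lambda = \mu$ and strictly positive there; the three factor-of-two bounds then follow by shrinking $U$ if necessary.

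Continuity at $\mu$ is the main content. Because $\mu$ is maximal, convergence $\lambda_n \to \mu$ in the measure topology forces Hausdorff convergence of supports, and hence (as recalled above) $C^1$ convergence of the singular graphs $\theta(\lambda_n,X) \to \theta(\mu,X)$ on compact sets disjoint from $\mu$, together with leafwise convergence of the foliations $\horizontal(\lambda_n,X) \to \horizontal(\mu,X)$ away from $\mu$. Combined with (T2), this shows that each horizontal half-edge $I$ of the rectangle corresponding to $R$ varies continuously in the $C^1$ topology with $\lambda$, since its endpoints are either endpoints of $\omega_\lambda$ or isolated singularities of $\horizontal(\lambda,X)$, both of which depend continuously on $\lambda$. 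Continuity of $\length_R$ is then read off from hyperbolic arc length along these converging arcs; continuity of $\measure_R$ follows from the joint continuity of the intersection pairing $\I(\param,\param)$ in the measure topology combined with the $C^1$ convergence of the arcs; and continuity of $\height_R$ follows from \propref{prop:hyp-dual}, which identifies $\height_R(\lambda)$ with the hyperbolic length of the closest-point projection of a vertical edge of $R$ (lying in $\omega_\lambda$) onto $\lambda$.

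Positivity at $\mu$ follows because $\mu$ is minimal (a consequence of (H1), which precludes saddle connections and hence isolated leaves): each horizontal half-edge is a nontrivial arc transverse to $\mu$, and the support of the transverse measure of a minimal measured lamination coincides with the full support, so such an arc has positive $\mu$--intersection; moreover the vertical edges of $R$ are nontrivial subarcs of $\omega$ whose projections to $\mu$ have positive hyperbolic length, giving $\height_R(\mu) > 0$; and $\length_R(\mu) > 0$ is immediate since horizontal half-edges are nondegenerate.

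The main obstacle is establishing the $C^1$ convergence of horizontal half-edges, because one must track how a singularity of $\horizontal(\lambda,X)$ sitting on an edge of $\theta(\lambda,X)$ moves as $\lambda$ is perturbed. Maximality of $\mu$ is what makes this possible: it pins down the combinatorial type of $\theta$ in a Hausdorff-continuous way, so the singularity on each horizontal edge persists and depends continuously on $\lambda$ throughout $U$, after shrinking if necessary.
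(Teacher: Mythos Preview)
Your argument is correct and matches the paper's approach: the lemma there is presented as a summary of the preceding continuity discussion (maximality of $\mu$ gives Hausdorff convergence of supports, hence continuity of $\height_R$, $\length_R$, $\measure_R$ at $\mu$), after which the factor-of-two inequalities follow by shrinking $U$. One small imprecision: your claim that ``$\mu$ is minimal as a consequence of (H1)'' conflates minimality of the foliation $\horizontal(\mu,X)$ with minimality of the lamination $\mu$; the cleaner reason $\measure_R(\mu)>0$---given by the paper later, in \secref{sec:Standard}---is that each horizontal half-edge is an arc in a leaf of $\horizontal$ joining two points of $\theta$, and any such arc necessarily crosses $\mu$.
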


\section{Construction of a Quasiconformal Map}
\label{sec:TheMap}
Our plan is to use the rectangle decomposition to define a quasiconformal map 
from the grafting ray to a \Teich ray.  In order to bound the quasiconformal constant, 
however, we need control over the shapes of the rectangles.  Thus we first consider 
a \emph{standard} surface $\Xstd$ for which the rectangle
decomposition is well-behaved.

We will need the following lemma. 
\begin{lemma} \label{lem:Dense} For every maximal lamination
$\mu$, the set $V_\mu$ of Riemann surfaces $Y \in \T(S)$ where 
$\horizontal(\mu,Y)$ has no saddle connections is the intersection of a countable 
number of open dense subsets of $\T(S)$.  For each $Y \in V_\mu$, there
is an arc $\omega$ in $\theta(\mu,Y)$ satisfying (H2) above.  
\end{lemma}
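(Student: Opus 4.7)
The plan is to exhibit $V_\mu$ explicitly as a countable intersection of open dense subsets of $\T(S)$, with the existence of $\omega$ deduced separately. Since $\mu$ is maximal, for every $Y \in \T(S)$ the singular locus $\theta(\mu, Y)$ has the same combinatorial type: one trivalent vertex at the center of each complementary ideal triangle of $\mu$, giving a three-prong singularity of $\horizontal(\mu, Y)$ there. A saddle connection of $\horizontal(\mu, Y)$ is determined up to isotopy by the ordered pair of prongs at its endpoints together with the combinatorial path it follows, i.e., the sequence of complementary triangles it crosses. This combinatorial datum ranges over a countable set; enumerate the types as $\tau_1, \tau_2, \ldots$ and let $W_{\tau_k} \subset \T(S)$ be the set of $Y$ for which $\horizontal(\mu, Y)$ has a saddle connection of type $\tau_k$, so that $V_\mu = \T(S) \setminus \bigcup_k W_{\tau_k}$.

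I would then show each $W_{\tau_k}$ is closed and nowhere dense. Closedness follows from continuity of $Y \mapsto \horizontal(\mu, Y)$ together with the fact that a saddle connection of fixed combinatorial type has uniformly bounded hyperbolic length on compact subsets of $\T(S)$, since it traverses a fixed number of ideal triangles of bounded diameter; any limit is then a saddle connection of the same type. Empty interior is the main obstacle. Using real-analytic coordinates on $\T(S)$ and the real-analytic dependence of the geodesic realization of $\mu$ on the hyperbolic structure, the condition for a type-$\tau_k$ saddle connection to exist is cut out by a single real-analytic equation $f_{\tau_k}(Y) = 0$ (the terminus of the prescribed combinatorial $\horizontal$--leaf must coincide with the chosen opposite prong). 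Real-analyticity reduces the problem to showing $f_{\tau_k} \not\equiv 0$; this I would verify by exhibiting a concrete deformation along which the equation varies. Given $Y_0 \in W_{\tau_k}$ with saddle connection $\sigma$, a Fenchel--Nielsen earthquake along a simple closed curve $\alpha$ having nontrivial algebraic intersection with $\sigma$ shears $\horizontal(\mu, Y)$ across $\alpha$ and perturbs the endpoint map nontrivially, so $f_{\tau_k}$ is not identically zero along this earthquake path. Making this nondegeneracy precise --- choosing $\alpha$ correctly and tracking the effect of the shear on the composed return maps --- is the delicate step.

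Granted that each $W_{\tau_k}$ is closed and nowhere dense, $V_\mu$ is the countable intersection $\bigcap_k \bigl(\T(S) \setminus W_{\tau_k}\bigr)$ of open dense sets, as required. For the second assertion, fix $Y \in V_\mu$ and any edge $e$ of $\theta(\mu, Y)$. Since $\horizontal(\mu, Y)$ has only finitely many singularities, each three-pronged, there are only finitely many separatrix leaves, and their intersection with the transversal $e$ is a countable subset $B \subset e$. Any sub-arc $\omega \subset e$ with endpoints in $e \setminus B$ and strictly interior to $e$ has the property that the leaves of $\horizontal$ through its endpoints never encounter a singularity; in particular the horizontal sides of the two rectangles containing the endpoints of $\omega$ satisfy (H2).
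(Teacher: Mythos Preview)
Your overall structure---index the possible saddle connections by a countable set and show each corresponding bad locus is closed and nowhere dense---matches the paper's, and your argument for the second assertion (existence of $\omega$) is essentially identical to theirs. The gap is in the nowhere-density step, which you yourself flag as ``the delicate step'' and do not carry out. Your proposed deformation, a Fenchel--Nielsen twist along an auxiliary simple closed curve $\alpha$, is hard to analyze: since $\mu$ is maximal, any such $\alpha$ necessarily crosses $\mu$, so twisting along $\alpha$ alters the geodesic realization of $\mu$ and hence $\horizontal(\mu,Y)$ in a way that is not simply a shear of $\horizontal$ across $\alpha$. Tracking the terminus of a given separatrix through this deformation and guaranteeing it moves off the opposite singularity is not straightforward, and the real-analytic dependence of that terminus on $Y$ is also asserted rather than established.

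The paper avoids all of this with a single clean observation: earthquake along $\mu$ itself. Under the left earthquake $Z \mapsto Z_t$ along $t\mu$, the support of $\mu$ is preserved, the complementary ideal triangles are rigid, and the $\horizontal_t$-measure of any fixed homotopy class of arc $\alpha$ joining two singularities is an explicit \emph{linear} function of $t$, namely its $\horizontal_0$-measure plus $t \cdot \I(\alpha,\mu)$. Because $\mu$ is maximal, every such arc has $\I(\alpha,\mu)>0$, so the slope is nonzero and the measure vanishes for at most one value of $t$. Hence if $Z$ lies in the bad set $V_\alpha^c$ then $Z_t \in V_\alpha$ for all $t \neq 0$, showing immediately that $V_\alpha^c$ is closed of codimension at least one. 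No real-analyticity, no auxiliary curve, and no tracking of separatrix endpoints is needed.
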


\begin{proof}
Since every point in $\ts$ comes with a marking, we can consider $\mu$ and 
$\horizontal(\mu,X)$ as a measured lamination and a measured foliation on $S$,
respectively.  Since
$\mu$ is maximal, there is one singularity of $\horizontal(\mu,X)$ contained
in each complementary ideal triangle.  Let $\calA=\calA(\mu)$ 
be the set of homotopy classes of arcs connecting the singular points of 
$\horizontal(\mu,X)$.  For any $Y\in \ts$, the set of homotopy classes of arcs 
connecting the singular points of $\horizontal(\mu,Y)$ is identified with $\calA$ via 
the marking map $S \to Y$.

Let $\alpha$ be an arc in $\calA$, and consider the set $V_\alpha$
of points $Y\in \ts$ such that $\alpha$ is not a saddle connection of
$\horizontal(\mu,Y)$. Suppose that $Z$ is in the complement of this set 
$V^c_\alpha$. Let $Z_t$ be the image of $Z$ after applying a left earthquake 
along the measured lamination $t\mu$ and let $\horizontal_t=\horizontal(\mu, Z_t)$. 
An arc in $\calA$ appears as a saddle connection of $\horizontal_t$ if its 
$\horizontal_t$--measure is zero. But the $\horizontal_t$--measure of each arc is 
a linear function of $t$; by the definition of 
the earthquake flow, the $\horizontal_t$--measure of such an arc is equal to its
$\horizontal_0$ measure plus its $\mu$--measure times $t$. Since
$\mu$ is maximal, every arc in $\calA$ has to intersect $\mu$ and hence the 
$\horizontal_t$--measure cannot remain constant. Therefore, $Z_t$ is in 
$V_\alpha$ for every $t>0$.  Since we can apply the same argument for right 
earthquakes, it follows that $V^c_\alpha$ is a closed subset of $\ts$ of co-dimension 
at least one. Thus, $V_\alpha$ is an open dense subset of $\ts$.  Since $\calA$ 
consists of a countable number of elements, the intersection 
$$
V_\mu= \cap_{\alpha \in \calA} V_\alpha
$$ 
is an intersection of a countable number of open dense subset of $\ts$.

For $Y \in V_\mu$, choose an arc $\omega_0  \in \theta(\mu,Y)$.  
There are finitely many leaves of $\horizontal(\mu,Y)$ that contain singularities, 
and these intersect $\omega_0$ in a countable set of points.  Any sub-interval 
$\omega \subset \omega_0$ whose endpoints are in the complement of this set
will satisfy (H2).
\end{proof}

\subsection{The standard surface}
\label{sec:Standard}
Consider a pair of pseudo-Anosov maps $\varphi$ and $\bvp$, 
so that the associated stable lamination $\nu$ and $\bnu$
are distinct and maximal. We perturb $X$ to a Riemann surface $\Xstd$ so that 
both orthogonal foliations $\horizontal=\horizontal(\nu,\Xstd)$ and 
$\bH=\horizontal(\bnu, \Xstd)$ satisfy (H1). This is possible because, 
by~\lemref{lem:Dense}, the intersection of $V_{\nu}$ and $V_{\bnu}$ is still 
dense and hence is non-empty.

Let $\theta$ and $\btheta$ be the singular loci of $\horizontal$ and
$\bH$ respectively. We choose arcs $\so$ and $\bso$ contained in 
$\theta$ and $\btheta$ respectively, satisfying (H2). 
Let $U$ and $\bU$ be open neighborhoods of $\nu$ and $\bnu$ as in 
\lemref{lem:Stability}. By making $U$ and $\bU$ smaller if necessary, we 
can assume that they are disjoint. 
Let $U_{\std} = U \cup \bU$, and for $\lambda \in \Ustd$, let 
$\mathcal R(\lambda)$ denote the rectangle decomposition
$\calR( \omega_\lambda, \lambda, \Xstd)$. 

For $R \in \calR(\nu)$, we know that $\length_R>0$ and 
$\height_R>0$ (this is true for any non-degenerate rectangle). 
An arc contained in a leaf of $\horizontal$ connecting two points
in $\theta$ must intersect $\lambda$. Therefore, such an arc 
connecting $\so$ to itself or to a singular point of $\horizontal$ 
(which also lies in $\theta$) has to have a positive $\nu$ measure. 
Hence, $\measure_R$ is also positive. Similar statements are true for $\bnu$. 
Define
\begin{align}
\label{eqn:lmc}
\Length &=\max \big\{ \length_R : R \in  \calR(\lambda), \ \lambda \in U_{\std} \big\};
\nonumber \\
\Measure &=\min \big\{\measure_R : R \in  \calR(\lambda), \ \lambda \in U_{\std} \big\};\\
\Height &=\min \, \big\{\height_R : R \in  \calR(\lambda), \ \lambda \in U_{\std} \big\}.
\nonumber 
 \end{align}
Then, by \lemref{lem:Stability}, $\Height$,  $\Measure$ and $\Length$ are
finite and positive.  These constants give uniform control over the shapes of all 
rectangles in any rectangle decomposition $\calR(\lambda)$ for $\lambda \in \Ustd$.

 For the rest of this section 
we restrict our attention to laminations 
$\lambda$  in $\Ustd$ only. 
We prove that the grafting ray $\gr(s\lambda, \Xstd)$ fellow 
travels a \Teich geodesic with constants depending on $\Xstd$, $\so$,
$\bso$ and $\Ustd$ but not on $\lambda$ (\propref{prop:TheMap}).

\subsection{Rectangle decomposition of $\gr(s\lambda,\Xstd)$.}

The rectangle decomposition $\calR(\lambda)$ can be extended to a
rectangle decomposition of the grafted surface $\gr(s\lambda,\Xstd)$
that is adapted to its Thurston metric rather than the hyperbolic
metric that uniformizes it. The surface $\gr(s\lambda, \Xstd)$ is obtained by 
cutting $\Xstd$ along the isolated leaves of $\lambda$ and 
attaching a cylinder of the appropriate thickness in their place. That is, the complement 
of the isolated leaves of $\lambda$ in $\Xstd$ is canonically homeomorphic to the 
complement of the corresponding cylinders in the grafted surface. 
However, when $\lambda$ has leaves that are not isolated,
 the complement of the cylinders changes as a metric space.
The length of an arc in the Thurston metric of $\gr(s\lambda, \Xstd)$
disjoint from the cylinders is its hyperbolic length plus its $s\lambda$--measure. 

The rectangle decomposition $\calR(\lambda)$ defines a rectangle decomposition
$\calR_s(\lambda)$ of $\gr(s\lambda, \Xstd)$ as follows. 
A rectangle $R$ in $\calR(\lambda)$ is extended to a rectangle $R_s$ by 
cutting along each isolated arc in $\lambda \cap R$ and inserting a Euclidean rectangle of 
width $s$ times the original $\lambda$--measure carried by the arc, as in Figure
\ref{fig:grafted}. Then $\calR_s(\lambda)$ is the collection of 
rectangles $R_s$. The foliation $\horizontal$ can be extended to a foliation
$\horizontal_s$ of $\gr(s\lambda, \Xstd)$; inside the cylinders corresponding to
isolated leaves, $\horizontal_s$ is the foliation by geodesic arcs (in the Euclidean
metric on the cylinder) that are perpendicular to the boundaries of the cylinder. 

\begin{figure}[tb]
\begin{center}
\includegraphics{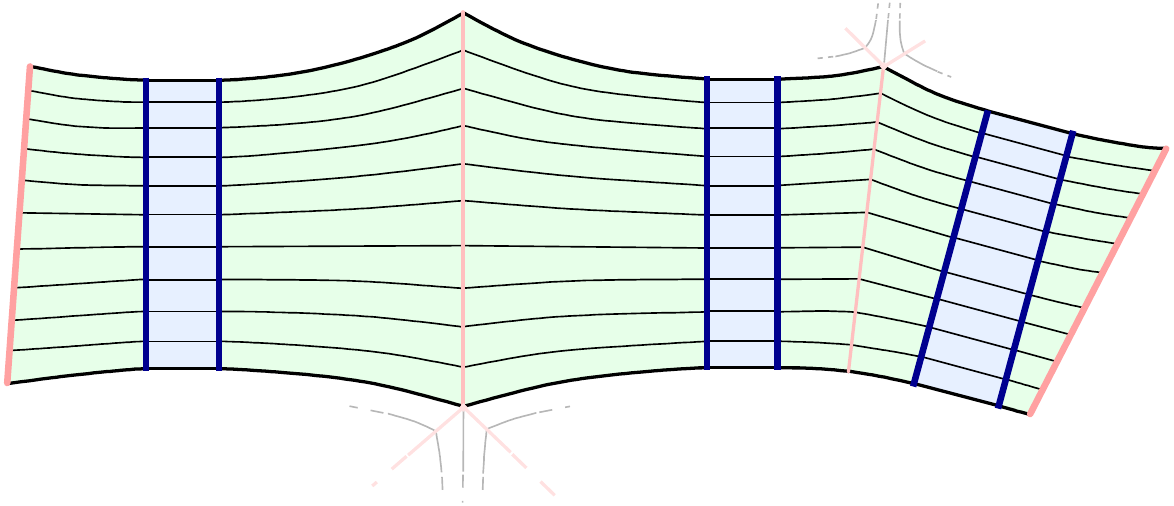}
\caption{The rectangle decomposition extends naturally to the grafted
  surface, replacing isolated arcs of $R \cap \lambda$ with Euclidean rectangles
  contained in the grafted annuli.\label{fig:grafted}}
\end{center}
\end{figure}

\subsection{Foliation parallel to $\lambda \cup \graph$}
\label{sec:coordinates}
Let $R$ be a rectangle in $\calR(\lambda)$.  We foliate $R$ with
geodesic arcs parallel to $\lambda \cup \theta$ as follows.
A component of $R \setminus (\lambda \cup \theta)$ is a geodesic
quadrilateral that has a pair of opposite sides lying in $\theta$ and $\lambda$
respectively. Consider this quadrilateral in the hyperbolic plane, where 
these opposite sides are contained in a pair of disjoint infinite geodesics
$g_1$ and $g_2$. If $g_1$ and $g_2$ do not meet at infinity, the 
region between them can be foliated by geodesics that
are perpendicular to the common perpendicular of $g_1$ and $g_2$. 
If $g_1$ and $g_2$ meet at infinity, the region between them can be foliated by
 geodesics sharing the same endpoint at infinity. 
This foliation restricts to a foliation of the quadrilateral by arcs.
Applying the same construction for each component in each rectangle,
we obtain a foliation $\vertical= \vertical( \omega_\lambda,\lambda, \Xstd)$ of $\Xstd$ that
is transverse to $\horizontal$. Note that, unlike $\horizontal$, the vertical foliation
$\vertical$ does not have a natural transverse measure.
  
Similar to $\horizontal$, the foliation $\vertical$
 can be extended to a foliation  $\vertical_s$ of 
the grafted surface $\gr(s\lambda, \Xstd)$; inside cylinders
corresponding to isolated leaves, $\vertical$ extends as the orthogonal
foliation to $\horizontal_s$. 

\subsection{Projections along $\vertical_s$.}
\label{sec:projections}
\begin{figure}[tb]
\begin{center}
\includegraphics{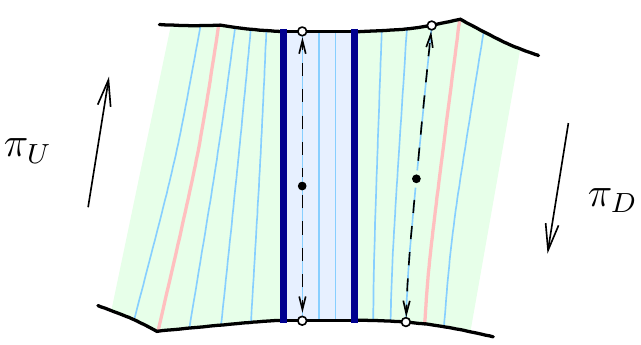}
\caption{A portion of a rectangle $R_s$ foliated by leaves of
  $\vertical_s$ in the case where $\lambda$ is a weighted multi-curve.
  The central rectangle is part of an annulus that has been grafted
  along $\lambda$. The top and bottom edges of $R_s$ are arcs in the
  leaves of $\horizontal_s$.\label{fig:projection}}
\end{center}
\end{figure}
Let $R$ be a rectangle in $\calR(\lambda)$. Orient $R$ so that the notions
of up, down, left and right are defined; these are still well defined 
for the grafted rectangle $R_s \in \calR_s(\lambda)$. 
We assume that the top and the bottom edges
are horizontal and the left and the right edges are vertical. 

Fixing the rectangle $R_s$, we define the map $\down$ from $R_s$
to the bottom edge of $R_s$ to be the projection downward 
along the leaves of $\vertical_s$ and the map $\up$ from $R_s$
to the top edge of $R_s$ to be the projection upward along the leaves of $\vertical_s$
(see Figure~\ref{fig:projection}). Also, define a map $h \from R_s \to \R_+$
to be the height. That is, for $p \in R_s$, $h(p)$ is the $\horizontal_s$--measure of 
any arc (transverse to $\horizontal$) connecting $p$ to the bottom edge of $R_s$. 

\begin{lemma} \label{Lem:bi-Lip}
There is a constant $\LipBound>0$ depending only on $\Xstd$ such that, for 
every $\lambda \in \Ustd$ and $R_s \in \calR_s(\lambda)$ equipped 
with the Thurston metric, the following holds.
\begin{enumerate}
\item The maps $\up$ and $\down$ are $\LipBound$--Lipschitz. 
Furthermore, the restrictions of these maps to a leaf of $\horizontal_s$ are
$\LipBound$--bi-Lipschitz. 
\item The map $h$ is $\LipBound$--Lipschitz. Furthermore, the restriction of $h$ 
to a leaf of $\vertical_s$ is $\LipBound$--bi-Lipschitz. 
\end{enumerate}
\end{lemma}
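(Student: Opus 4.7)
The plan is to decompose $R_s$ into finitely many simple pieces on which the foliations $\horizontal_s$ and $\vertical_s$ provide smooth coordinates, establish Lipschitz bounds on each piece, and then glue. The pieces are of two types: the Euclidean strips inserted along isolated leaves of $\lambda$ during grafting, and the hyperbolic geodesic quadrilateral components of $R \setminus (\lambda \cup \theta)$. On each Euclidean strip, $\horizontal_s$ and $\vertical_s$ are mutually orthogonal Euclidean foliations parallel to the sides, so the restrictions of $\up$ and $\down$ are Euclidean orthogonal projections (isometries on each leaf of $\horizontal_s$), and $h$ is the Euclidean height coordinate. Everything here is $1$-Lipschitz and $1$-bi-Lipschitz on the relevant leaves.

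The substantive case is a hyperbolic quadrilateral $Q$. Two opposite sides of $Q$ are geodesic arcs lying in $\lambda \cup \theta$, and $\vertical|_Q$ is the foliation by common perpendiculars of these sides (or by geodesics sharing their common ideal endpoint, in the asymptotic case); the other two sides are subarcs of leaves of $\horizontal|_Q$. By \lemref{lem:Stability} and the definitions of $\Length, \Measure, \Height$, for every $\lambda \in \Ustd$ and every such $Q$, the horizontal edges of $Q$ have hyperbolic length at most $2\Length$ and $\lambda$-transverse measure at least $\Measure/2$, while the vertical extent of $Q$ is bounded above by $\max_{\mu \in \{\nu,\bnu\},\, R} \height_R(\mu)$, which is finite. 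These bounds confine $Q$ to a compact family of hyperbolic quadrilaterals modulo isometry; on this family both $\horizontal$ and $\vertical$ depend continuously on the shape and are everywhere transverse, with angle of intersection bounded below away from zero. Hence the differentials of $\down|_Q$, $\up|_Q$, and $h|_Q$ are uniformly bounded, yielding the Lipschitz bound on $Q$ and the bi-Lipschitz bounds on the restrictions to leaves of $\horizontal|_Q$ and $\vertical|_Q$.

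To glue, observe that any leaf of $\horizontal_s$ inside $R_s$ is a concatenation of finitely many sub-arcs, one per piece, and that the total number of pieces traversed is uniformly bounded, because $\calR(\omega_\lambda, \lambda, \Xstd)$ has a fixed topology for $\lambda \in \Ustd$. Since $\vertical_s$ fits together continuously across transitions between pieces, the restriction of $\down$ to a horizontal leaf is a composition of uniformly bi-Lipschitz maps, and hence bi-Lipschitz with a uniform constant; the analogous statement holds for $\up$ and for $h$ restricted to leaves of $\vertical_s$. For the global Lipschitz bound on $R_s$, two points $p, q$ can be joined by an arc alternating between leaves of $\horizontal_s$ and $\vertical_s$ of total length comparable to the Thurston distance $d(p,q)$ (possible because $\horizontal_s$ and $\vertical_s$ are uniformly transverse), and the leaf-wise bounds combine to bound $d(\down(p),\down(q))$, $d(\up(p),\up(q))$, and $|h(p)-h(q)|$.

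The main obstacle is uniformity: every constant produced must be independent of $\lambda \in \Ustd$ and of the individual quadrilateral. This reduces, via \lemref{lem:Stability}, to the compactness of the shape space of hyperbolic quadrilaterals subject to the uniform parameter bounds $\Height, \Length, \Measure$, and to the fact that the topology of the decomposition $\calR(\omega_\lambda, \lambda, \Xstd)$ does not change as $\lambda$ varies in $\Ustd$.
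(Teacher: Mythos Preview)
Your overall strategy---reduce to the Euclidean strips and to the hyperbolic quadrilaterals $Q$ that make up $R\setminus(\lambda\cup\theta)$, then use compactness of the shape space of such $Q$---matches the paper's approach.  However, two related steps in your argument do not go through.

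\medskip
\textbf{The number of pieces is not bounded.}  You assert that a leaf of $\horizontal_s$ in $R_s$ traverses a uniformly bounded number of pieces ``because $\calR(\omega_\lambda,\lambda,\Xstd)$ has a fixed topology.''  That topological stability fixes the number of \emph{rectangles} $R$, not the number of quadrilaterals $Q$ inside a given $R$.  The laminations in $\Ustd$ are close to the stable lamination of a pseudo-Anosov; for such $\lambda$ (and for nearby multicurves of high complexity), a single rectangle $R$ meets $\lambda$ in arbitrarily many arcs, so a horizontal leaf crosses arbitrarily many $Q$'s.  Your ``composition of uniformly bi-Lipschitz maps'' gluing therefore gives no uniform constant.

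\medskip
\textbf{The compactness parameters are the wrong ones.}  The constants $\Length,\Measure,\Height$ from \lemref{lem:Stability} bound the \emph{rectangles} $R$, not the quadrilaterals $Q$.  In particular $\Measure$ gives no lower bound on the width of an individual $Q$; the quadrilaterals can (and do) degenerate.  The paper instead bounds $Q$ by two quantities intrinsic to $\Xstd$: the hyperbolic length of $\omega$ (bounding the $\lambda$--side $e_1$, via the $1$--Lipschitz closest-point projection) and the diameter of $\Xstd$ (bounding the distance from $e_1$ to the $\theta$--side $e_2$).  Crucially, the compact family of shapes must include the degenerate cases where $e_1$ has length zero or where $e_1$ and $e_2$ coincide; the Lipschitz constants remain finite there and vary continuously.

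\medskip
\textbf{What replaces the counting argument.}  The missing observation is that the Thurston length of an arc in $R_s$ is its hyperbolic length plus its $s\lambda$--measure, and that projection along $\vertical_s$ \emph{preserves the $s\lambda$--measure} (leaves of $\vertical_s$ are parallel to $\lambda$).  Hence it suffices to bound the change in \emph{hyperbolic} length under projection.  The uniform Lipschitz bound on each $Q$ then sums over the (possibly infinitely many) pieces by additivity of arc length---no bound on the number of pieces is needed.  Likewise, the bi-Lipschitz bounds on leaves are pointwise derivative bounds on each $Q$, so they transfer to the full leaf without any counting.  For the map $h$, one uses that the Thurston metric dominates the hyperbolic metric, so a hyperbolic Lipschitz bound on each $Q$ already gives a Thurston Lipschitz bound.
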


\begin{proof}
The lemma clearly holds for the interior of added cylinders with
$\LipBound=1$ as $\up$, $\down$ and $h$ are just Euclidean
projections. As mentioned before, in the complement of these added
cylinders, the Thurston length of an arc in $R_s$ is the sum of the
hyperbolic length of this arc and its $s\lambda$--transverse
measure. As one projects an arc up or down, the $s\lambda$--measure
does not change. Therefore, to prove the first part of the lemma, we
need only to prove it for the restriction of $\up$ and $\down$ to
every component of $R \setminus (\lambda \cup \graph)$. Similarly,
proving part two in each component of $R \setminus (\lambda \cup
\graph)$ is also sufficient. This is because showing $h$ is Lipschitz
with respect to the hyperbolic metric implies that it is Lipschitz
with respect to the Thurston metric as well (since the Thurston metric
is pointwise larger \cite[Prop.~2.2]{tanigawa}).  Also, the leaves of
$\vertical$ reside in one component.

Let $Q$ be a component of $R \setminus (\lambda \cup \graph)$.  
We know that $Q$ is a hyperbolic quadrilateral with one vertical edge $e_1$ in 
$\lambda$ and the other $e_2$ in $\theta$. Since $\horizontal$ was defined by
closest point projection to $\lambda$, the top and bottom edges of $Q$ make
an angle of $\pi/2$ with the edge $e_1$. The hyperbolic length of $e_1$ is
bounded by the hyperbolic length of $\omega$ and the maximum distance 
between $e_1$ and $e_2$ is bounded above by the diameter of $\Xstd$.
That is, fixing $\Xstd$ and $\omega$, the space of possible shapes (after including 
the degenerate cases) is compact in Hausdorff topology. For each possible
quadrilateral $Q$, the maps in question are Lipschitz (including the degenerate
cases where the length of $e_1$ is zero, or, $e_1$ and 
$e_2$ coincide) and the Lipschitz constants vary continuously with shape. 
Hence, the maps $\up$, $\down$ and $h$ are uniformly Lipschitz.

Also, the restriction of $\up$ and $\down$ to leaves of
$\horizontal$ are always bijections and have positive derivatives and the restriction
of $h$ to a leaf of $\vertical$ is a bijection and has a positive derivative. Thus, there 
is a uniform lower bound for these derivatives and hence they are uniformly
 bi-Lipschitz maps. 
\end{proof}

\subsection{Mapping to a singular Euclidean surface}
For each rectangle $R_s$ in $\calR_s(\lambda)$ consider a corresponding 
Euclidean rectangle $E_s$ with width  equal to the $s\lambda$--measure 
of the horizontal edges of $R_s$ and height equal to the $\horizontal_s$--measure 
of the vertical edge. Recall that a horizontal edge of a rectangle $R_s$ is divided
into two (or one; see \secref{subsec:Geometric})
 horizontal half-edges; the set of horizontal 
half-edges of $R_s$ is denoted by $\calI_{R_s}$. We also mark a special point
on each the horizontal edge of $E_s$, dividing it into horizontal half-edges, so that 
the Euclidean length of the interval associated to $I \in \calI_{R_s}$ is equal to 
$\I(s\lambda, I)$. 

We fix a correspondence between horizontal half-edges of $R_s$ and $E_s$. 
Glue the rectangles $E_s$ along these horizontal half-edges and the vertical 
edges with Euclidean isometries in the same pattern as the rectangles $R_s$ 
are glued in $\calR_s(\lambda)$. Each horizontal half-edge $I$ appears in two 
rectangles, but $\I(s\lambda, I)$ is independent of which rectangle we choose. 
Similarly, the vertical edges appear in two rectangles each, but their 
$\horizontal_s$--measures 
are independent of the choice of the rectangle. Hence the lengths of corresponding 
intervals in $E_s$ match and the gluing is possible; it results in a singular 
Euclidean surface $\calE_s$. Our goal in this section is to define a quasiconformal 
map $F$ between $\gr(s\lambda, \Xstd)$ and $\calE_s$. 

Consider the rectangles in $\calR_s(\lambda)$ as sitting in $\gr(s\lambda, \Xstd)$ 
and the corresponding rectangles as sitting in $\calE_s$. The horizontal 
half-edges and the vertical edges of the rectangles
 form a graph in $\gr(s\lambda, \Xstd)$. First, we define a map $f_s$ from 
this graph to the associated graph in $\calE_s$. Note that, for the gluing
to work, the map should depend on the edge only and not on the choice of 
rectangle containing it. We map any horizontal half-edge $I \in \calI_{R_s}$ of the rectangle $R_s$ linearly onto the associated interval in $E_s$, where we take 
$I$ to be equipped with the induced Thurston metric.  We map a vertical edge 
$J$ to the associated vertical edge in $E_s$ so that the 
$\horizontal_s$--measure is preserved. Note that $f_s$
is distance non-increasing.

Now that the map $f_s$ is defined on the $1$--skeleton, we extend it to a map 
$$F_s \from \gr(s\lambda, \Xstd) \to \calE_s,$$ 
as follows: for each rectangle $R_s$, we send leaves of $\vertical$ to geodesic
segments in $E_s$ so that $\horizontal_s$ is preserved. More precisely, 
for every point $p \in R_s$, consider the points 
$q=f_s(\up(p))$ and $q'=f_s(\down(p))$. Then, let $F_s(p)$ be the point in the
segment $[q,q']$ whose distance from the bottom edge of $E_s$ is $h(p)$. 
We observe the following:

\begin{lemma} \label{lem:Slope}
The slope of the segment $[q,q']$ is uniformly bounded below. 
\end{lemma}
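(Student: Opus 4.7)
The plan is to bound the horizontal displacement $|x_q - x_{q'}|$ above by a uniform constant depending only on the standard data $(\Xstd, \so, \bso, \Ustd)$; combined with $h_s = \height_R \ge \Height > 0$, this yields the required lower bound on the slope $h_s/|x_q - x_{q'}|$. I will aim for the bound $|x_q - x_{q'}| \le 2\Length$, giving slope at least $\Height/(2\Length)$.

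The key geometric observation is that the $\vertical_s$-leaf from $\up(p)$ to $\down(p)$ has zero $\lambda$-transverse measure. Indeed, $\vertical$ is orthogonal to $\horizontal$, which is itself orthogonal to $\lambda$, so leaves of $\vertical$ run parallel to $\lambda$ in the hyperbolic complement of the grafted annuli; inside each annulus, $\vertical_s$ runs parallel to the cylinder boundaries, which are copies of isolated $\lambda$-leaves. Closing up the leaf with sub-arcs of the top and bottom horizontal edges and one vertical edge of $R_s$---the vertical edges lie in $\theta$ and hence carry no $\lambda$-measure---produces a null-homotopic loop in $R_s$, forcing $\up(p)$ and $\down(p)$ to share a common $\lambda$-coordinate $\alpha$ on their respective edges. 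The topological stability of the rectangle decomposition (\S\ref{sec:stability}) guarantees that $\up(p)$ and $\down(p)$ lie in corresponding top and bottom half-edges, so this comparison is unambiguous within a single pair.

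Next I would compute $x_q$ directly. On a horizontal half-edge of hyperbolic length $l_h$ and $\lambda$-measure $m$, the Thurston length is $l = l_h + sm$, so $f_s$ is the linear scaling by $\kappa := sm/l \in [0,1)$. Since the Thurston distance from the corner to $\up(p)$ decomposes as $t_h + s\alpha$ (hyperbolic-part contribution plus $s$ times $\lambda$-measure), we have $x_q = \kappa(t_h + s\alpha)$ and hence
\[
x_q - s\alpha \;=\; \kappa\, t_h \,-\, (1-\kappa)\, s\alpha.
\]
Using $\alpha \le m$ together with the identity $(1-\kappa)sm = \kappa\, l_h$, both $\kappa\, t_h$ and $(1-\kappa)\, s\alpha$ are non-negative and bounded above by $l_h \le \Length$. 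Since $|A - B| \le \max(A, B)$ for non-negative $A, B$, we conclude $|x_q - s\alpha| \le \Length$.

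The same estimate applied to $x_{q'}$ on the corresponding bottom half-edge yields $|x_q - x_{q'}| \le 2\Length$, and therefore the slope is at least $\Height/(2\Length)$, a positive constant depending only on the standard data. The entire argument really hinges on the first step: verifying that $\vertical_s$ extends through each grafted annulus as arcs parallel to the cylinder boundaries (immediate from the definition in \S\ref{sec:coordinates}) and that the rectangle's singularity-alignment is preserved under grafting, so $\up(p)$ and $\down(p)$ lie in matching half-edges. Once these geometric facts are secured, the remaining bound is the short calculation above.
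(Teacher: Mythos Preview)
Your overall strategy matches the paper's: the height of $E_s$ is at least $\Height$, and the shared $\lambda$-coordinate (equivalently, the paper's identity $\I(s\lambda,I)=\I(s\lambda,I')$, where $I,I'$ run from the left vertices to $\up(p),\down(p)$) is the key to bounding $|x_q-x_{q'}|$. However, there is a genuine gap in how you handle the half-edge structure.

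Your claim that $\up(p)$ and $\down(p)$ lie in ``corresponding'' half-edges is false in general, and it is not a consequence of topological stability (\S\ref{sec:stability} concerns varying $\lambda$, not the internal geometry of a fixed rectangle). The singularities on the top and bottom horizontal edges of $R_s$ lie on \emph{different} $\vertical_s$-leaves; they would lie on the same leaf only if $\horizontal$ had a saddle connection, which is excluded by (H1). Hence for $p$ in the intermediate range, $\up(p)$ can be to the left of the top singularity while $\down(p)$ is to the right of the bottom one (or conversely). You explicitly say the argument ``hinges'' on this matching, so this is not a cosmetic issue.

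Because $f_s$ is only \emph{piecewise} linear on a horizontal edge, with a different factor $\kappa$ on each half-edge, your formula $x_q=\kappa(t_h+s\alpha)$ is literally valid only when $\up(p)$ lies in the left half-edge and $\alpha$ is measured from the left vertex. When $\up(p)$ lies in the right half-edge one has instead $x_q=sm_1+\kappa_2(t_{h,2}+s\alpha_2)$ with $\alpha=m_1+\alpha_2$; after cancellation one still gets $x_q-s\alpha=\kappa_2 t_{h,2}-(1-\kappa_2)s\alpha_2$ and hence $|x_q-s\alpha|\le \Length$, so your conclusion survives---but you never perform this case analysis, relying instead on the false half-edge-matching premise. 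Since $\up(p)$ and $\down(p)$ may sit in non-matching half-edges, the two instances of your displayed computation are governed by unrelated pairs $(l_h,m,\kappa)$, and only the common $\alpha$ ties them together.

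The paper sidesteps this by not computing $x_q$ explicitly: it observes that as $p$ moves rightward along a leaf of $\horizontal$, the quantity $(\text{Thurston length of }I)-x_q$ is monotone increasing (because $f_s$ has derivative at most $1$ everywhere on the edge), hence bounded by its terminal value, which is the hyperbolic length contribution. This monotonicity argument handles the piecewise-linear structure of $f_s$ uniformly, without any half-edge bookkeeping.
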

\begin{proof}
Consider the rectangle $E_s$ in $\R^2$ with the horizontal and the vertical
edges parallel to the $x$--axis and the $y$--axis respectively and the
bottom left vertex at the origin. 
The height of $R_s$ is at least $\Height$, so the same is true of 
$E_s$. Hence, we need to show that
the $x$--coordinates of $q$ and $q'$ differ by at most a bounded amount. 

Let $I$ be the interval connecting the top left vertex of $R_s$ to $\up(p)$
and let $I'$ be the interval connecting the bottom left vertex of $R_s$ to 
$\down(p)$. Note that, $\I(s\lambda, I) = \I(s\lambda, I')$. Also, the
Thurston length of $I$ is equal to $\I(s\lambda, I)$ plus the hyperbolic
length of $I$, which is bounded above by $\Length$ and the same holds
for $I'$. Therefore, the Thurston lengths of $I$ and $I'$ differ by at most
$2\Length$. It remains to be shown  that the difference between the
$x$--coordinate of $q$ and the Thurston length of $I$ and
the difference between the $x$--coordinate of $q'$ and the Thurston length of $I'$ 
are uniformly bounded. 

To see this last assertion note that, as $p$ moves to the right along a leaf of 
$\horizontal$, the difference between the Thurston length of $I$ and the
$x$--coordinate of $q$ increases (the derivatives of the map $f_s$ are 
always less than $1$). Thus, this difference is an increasing function 
that varies from zero to $\length_R$ and hence is uniformly
bounded by $\Length$. A similar argument works for $q'$ and $I'$. 
This finishes the proof. 
\end{proof}

\subsection{Bounding the quasiconformal constant}
We now bound the quasiconformal constant of $F_s$.  First we introduce some
notation.  

Given two quantities $A$ and $B$, we say $A$ is comparable to $B$
and write  $A \emul B$, if
$$ \frac{1}{c} B \le A \le cB,$$
for a constant $c$ that depends on predetermined values, 
such as the topology of $S$, or $\Height, \Length, \Measure$ 
as defined above. Similarly, $A \eadd B$ means that there is a 
constant $c>0$ such that
$$ B - c < A < B + c,$$
where the constant $c$ may have similar dependencies.
We say, $A$ is of order of $B$ and write $A \lmul B$ if 
$A \leq cB$, for $c$ as above. The notation $A \ladd B$ is defined analogously. 
 \begin{proposition} 
\label{prop:qc-calculation}
Let $s_0 > 0$.  Then there is a constant $k$, depending on $s_0$, such that
for any $\lambda \in \Ustd$ and $s \geq s_0$,
the map 
$$
F_s \from \gr(s\lambda, \Xstd) \to \calE_s
$$ 
is $k$--quasiconformal.   
\end{proposition}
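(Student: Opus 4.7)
The plan is to work one rectangle at a time. Since $F_s$ maps each rectangle $R_s \in \calR_s(\lambda)$ bijectively onto its Euclidean model $E_s \subset \calE_s$, it suffices to bound the quasiconformal dilatation of $F_s|_{R_s}$ by a constant depending only on $s_0$ and the data fixed in \secref{sec:Standard}. On $R_s$ I will use coordinates $(u,v)$, where $u$ is the Thurston arc-length of $\down(p)$ along the bottom edge from the lower-left vertex and $v = h(p)$. In these coordinates the leaves of $\horizontal_s$ become the level sets $\{v = \text{const}\}$, those of $\vertical_s$ become $\{u = \text{const}\}$, and the map takes the form
$$
F_s(u,v) \,=\, \Bigl( (1 - v/\height_R)\, a(u) + (v/\height_R)\, a'(u),\ v \Bigr),
$$
with $a(u) = f_s(\down(p))$ and $a'(u) = f_s(\up(p))$.

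Next I would bound the partial derivatives of the first coordinate $X(u,v)$ of $F_s$. On each horizontal half-edge $I$ the restriction of $f_s$ is affine with slope $\kappa_I = s\,\I(\lambda,I)/\bigl(\ell_{\mathrm{hyp}}(I) + s\,\I(\lambda,I)\bigr)$. The inequalities $\ell_{\mathrm{hyp}}(I) \leq \Length$ and $\I(\lambda,I) \geq \Measure$, together with $s \geq s_0$, give $\kappa_I \in [c_0, 1]$ where $c_0 = s_0\Measure/(\Length + s_0\Measure) > 0$. Combined with the fact that the top-to-bottom correspondence $\tilde u = (\up \circ \down^{-1})(u)$ is $\LipBound^2$-bi-Lipschitz (a consequence of \lemref{Lem:bi-Lip} applied along both the top and bottom horizontal leaves), this shows that $da/du$ and $da'/du$, and therefore the convex combination $\partial X/\partial u$, are bounded above and bounded away from zero by positive constants depending only on $s_0$, $\Measure$, $\Length$, and $\LipBound$. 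The other derivative $\partial X/\partial v = (a'(u) - a(u))/\height_R$ is bounded in absolute value by \lemref{lem:Slope} together with $\height_R \geq \Height$.

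To convert these estimates into a quasiconformal bound I would write $DF_s$ as a matrix between a Thurston-orthonormal frame on the domain and the standard Euclidean frame on the target. The coordinate vectors $\partial_u$, $\partial_v$ are tangent to $\horizontal_s$ and $\vertical_s$ respectively, so \lemref{Lem:bi-Lip} yields $|\partial_u|_T,\ |\partial_v|_T \in [1/\LipBound, \LipBound]$. The angle $\alpha$ between $\horizontal_s$ and $\vertical_s$ equals $\pi/2$ inside grafted cylinders and, inside each hyperbolic quadrilateral, is bounded away from $0$ and $\pi$ by the compactness of the space of quadrilateral shapes used in the proof of \lemref{Lem:bi-Lip}. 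Consequently every entry of the orthonormal-frame matrix of $DF_s$ is uniformly bounded, while its Jacobian determinant equals $(\partial X/\partial u)/(|\partial_u|_T\,|\partial_v|_T\,\sin\alpha)$, which is uniformly bounded away from zero. These two facts together control the ratio of the singular values, which is exactly the pointwise quasiconformal dilatation.

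The principal obstacle is the uniformity of every constant above over $\lambda \in \Ustd$ and over all rectangles $R_s$. This uniformity rests squarely on \lemref{lem:Stability} and the rectangle parameter bounds $\Height, \Length, \Measure$ established in \secref{sec:Standard}. The hypothesis $s \geq s_0$ enters only through the estimate $\kappa_I \geq c_0$, which is precisely what prevents $\partial X/\partial u$ from degenerating to zero; without it the dilatation could blow up along the grafting ray as $s \to 0$.
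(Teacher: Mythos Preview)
Your argument is correct and rests on exactly the same ingredients as the paper's proof (\lemref{Lem:bi-Lip}, \lemref{lem:Slope}, and the lower bound $\kappa_I \geq s_0\Measure/(\Length+s_0\Measure)$ coming from the hypothesis $s\ge s_0$), but the packaging is different.  The paper works metrically: it shows directly that $d_{Th}(p_1,p_2)\asymp d_{\R^2}(F_s(p_1),F_s(p_2))$ by pushing $p_1,p_2$ to the boundary edges via $\up,\down,h$ for the forward inequality, and by constructing an explicit short path through the intermediate point where the $\horizontal$--leaf through $p_1$ meets the $\vertical$--leaf through $p_2$ for the reverse inequality.  You instead introduce the $(u,v)$ coordinates and bound the Jacobian of $F_s$ entrywise.

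The one genuinely new ingredient your route requires is the uniform lower bound on the angle $\alpha$ between $\horizontal_s$ and $\vertical_s$ inside the hyperbolic quadrilaterals.  This is true, but your appeal to ``the compactness of the space of quadrilateral shapes used in the proof of \lemref{Lem:bi-Lip}'' deserves a sentence more: one should observe that a leaf of $\horizontal$ and a leaf of $\vertical$ are geodesics perpendicular to $g_1$ and to the common perpendicular $c$ of $g_1,g_2$ respectively, and since $g_1\perp c$ no geodesic can be perpendicular to both (that would give a hyperbolic triangle with three right angles); transversality plus the diameter bound on $Q$ then gives a positive angle on each quadrilateral, and compactness makes it uniform.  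The paper's metric argument sidesteps this issue entirely---the intermediate-point path in the reverse inequality is what replaces your $\sin\alpha$ in the denominator.  Either way the constants depend only on $\Height,\Length,\Measure,\LipBound$ and $s_0$, so uniformity over $\lambda\in\Ustd$ follows from \secref{sec:Standard} as you note.
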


\begin{proof}
Let $p_1$ and $p_2$ be two points in $R_s$. We will show:
$$
d_{Th}(p_1, p_2) \emul d_{\R^2} \big(F_s(p_1), F_s(p_2)\big). 
$$
Let $\ep=d_{Th}(p_1, p_2)$. By \lemref{Lem:bi-Lip}, we have
$$
d_{Th}(\up(p_1), \up(p_2)) \lmul \ep, \quad
d_{Th}(\down(p_1), \down(p_2)) \lmul \ep,
$$
and
$$
|h(p_1) -h(p_2) | \lmul \ep.
$$
For $i=1,2$, let $q_i = f_s(\up(p_i))$ and $q_i'=f_s(\down(p_i))$.   Note that the points $q_1,q_2$ lie on a horizontal line, the top side of the rectangle $E_s$, and similarly $q_1',q_2'$ lie on the bottom of the rectangle.
Since, $f_s$ is distance non-increasing, we have
$$
d_{\R^2}(q_1, q_2) \lmul \ep 
\quad\text{and}\quad
d_{\R^2}(q'_1, q'_2) \lmul \ep.
$$
Hence, the horizontal distance between the lines $[q_1, q_1']$ and $[q_2, q_2']$
is of order of $\ep$ at every height. Also, by \lemref{lem:Slope}, these lines have
slope bounded below.   For $i=1,2$, the point $F_s(p_i)$ lies on the line $[q_i,q_i']$ at height $h(p_i)$, so if we cut the lines $[q_1,q_2']$ and $[q_2,q_2']$ by the pair of horizontal lines corresponding to these heights, then $F_s(p_1)$ and $F_s(p_2)$ are opposite corners of the resulting quadrilateral.

A quadrilateral which has two opposite horizontal sides of length of order $\ep$,
a height of order $\ep$ and a definite angle at each vertex (guaranteed here 
by the slope condition) has a diameter that is also of order of $\epsilon$, 
thus the distance between $F_s(p_1)$ and $F_s(p_2)$
is also of order of $\ep$.

In the other direction, suppose $\delta =  d_{\R^2} \big(F_s(p_1), F_s(p_2)\big)$.
The restriction of $F_s$ to a horizontal half-edge $I$ is linear,
with derivative equal to $\I(s\lambda, I)$ divided by the Thurston
length of $I$, and we have
$$ 
\frac{ s\Measure}{ \Length + s\Measure} 
  \geq \frac{s_0 \Measure}{\Length + s_0 \Measure},
$$
so this derivative is bounded below independent of $s$.  The corresponding upper bound for the derivative of the inverse map gives
$$
d_{Th}(\up(p_1), \up(p_2)) \lmul \delta,
$$
and by \lemref{Lem:bi-Lip}, 
$$
|h(p_1)-h(p_2)| \lmul \delta.
$$
Consider the leaf $l$ of $\horizontal$ passing through $p_1$ and let $p$
be the intersection of $l$ with the leaf of $\vertical$ passing through $p_2$. 
Since $h$ restricted to a leaf of $\vertical$ is uniformly bi-Lipschitz, 
the arc  $[p_2,p]$ has a length of order $\delta$. Also, since $\up$ restricted to a 
leaf of $\horizontal$ is uniformly bi-Lipschitz, the arc $[p,p_1]$ along a leaf of 
$\horizontal$ has a length of order $\delta$. The triangle inequality implies: 
\begin{equation*} 
d_{\text{Th}}(p_1, p_2) \lmul \delta. 
\end{equation*}
The same argument works for every rectangle in $\calR_s(\lambda)$. That is,
the map $F$ is uniformly bi-Lipschitz and thus uniformly quasiconformal. 
\end{proof}

\subsection{The \Teich ray}

The map $F_s$ provides a marking for $\calE_s$ and thus we can consider
$\calE_s$ as a point in $\ts$. We will show that after 
reparameterization, this family of points traces a \Teich geodesic ray. 
The surface $\calE_s$ defines a quadratic differential $q_s$:
locally away from the singularities,  $\calE_s$ can be identified with a 
subset of $\R^2$ sending the horizontal and the vertical foliations to lines
parallel to the $x$--axis and $y$--axis respectively. We define $q_s$ in this local
coordinate to be $dz^2$. 

The leaves of the horizontal lines on $E_s$ (defined locally by $y$=constant)
match along the gluing intervals to define a singular foliation on $\calE_s$. 
Then $|dy|$ defines a transverse measure on this foliation. 
From the construction, we see that this measured foliation represents
$\horizontal_s$.  Similarly, the vertical lines on $E_s$ define a singular foliation with
transverse measure $|dx|$ to give 
a measured foliation representing $s\lambda$. 

The Euclidean area of $\calE_s$ is $s \cdot \ell_{\Xstd}(\lambda)$.
Thus, scaling by $1/{(s \ell _{\Xstd}(\lambda))}$, we get a unit-area quadratic
differential $q_s$ on $\calE_s$ whose vertical
and horizontal foliations are respectively,
$\sqrt{s} \lambda/\sqrt{\ell_{\Xstd}(\lambda)}$ and $\horizontal/\sqrt{s \ell _{\Xstd}(\lambda)}$.

Letting $s = e^{2t}$,
 we obtain
the one-parameter family of quadratic differentials 
$$q_s = \left[ \begin{matrix} e^t & 0 \\ 0 & e^{-t}  \end{matrix} \right] q_{1}.$$
It is well known that the underlying conformal structures of these quadratic differentials
traces a \Teich geodesic parameterized by arc length with parameter $t$.

We summarize the discussion in the following:
\begin{proposition}
\label{prop:TheMap} Let $t_0 \in \R$. Then there is a constant $\quasi$, depending on
$t_0$ such that the following holds:
For any $\lambda \in  \Ustd$ 
there is a Riemann surface $Y_\lambda$ such that for all $t \geq t_0$
$$
d_\T \Big(\gr(e^{2t} \lambda, \Xstd),\G(t,\lambda,Y_\lambda) \Big) \leq \quasi.
$$
\end{proposition}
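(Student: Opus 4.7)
The plan is to assemble the pieces constructed so far: the $k$--quasiconformal map $F_s$ from \propref{prop:qc-calculation} bounds the Teichm\"uller distance from the grafted surface to the singular Euclidean surface $\calE_s$, while the explicit description of the quadratic differentials $q_s$ identifies the latter family with a Teichm\"uller geodesic. The proposition should then follow by combining these two facts and choosing $Y_\lambda$ to be the base point of that geodesic.

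First I would set $s_0 = e^{2t_0}$ and apply \propref{prop:qc-calculation} to obtain a constant $k = k(s_0)$ such that, for every $\lambda \in \Ustd$ and every $s \geq s_0$, the map $F_s \from \gr(s\lambda, \Xstd) \to \calE_s$ is $k$--quasiconformal. The marking on the grafted surface (inherited from $\Xstd$) is transported through $F_s$ to a marking on $\calE_s$, so we may regard $\calE_s$ as a point in $\T(S)$.

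Next I would define $Y_\lambda := \calE_1 \in \T(S)$, with the marking induced by $F_1$. By the computation at the end of the preceding subsection, the unit-area quadratic differentials on the surfaces $\calE_s$ satisfy $q_{e^{2t}} = \bigl(\begin{smallmatrix} e^t & 0 \\ 0 & e^{-t} \end{smallmatrix}\bigr) q_1$, and the vertical foliation of $q_1$ lies in the projective class of $\lambda$. Standard Teichm\"uller theory then identifies the curve $t \mapsto \calE_{e^{2t}}$ with the Teichm\"uller geodesic starting at $Y_\lambda$ in the direction of $\lambda$, parameterized by arc length; that is, $\calE_{e^{2t}} = \G(t, \lambda, Y_\lambda)$ for all $t$.

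Finally, since the Teichm\"uller distance between two marked Riemann surfaces connected by a $k$--quasiconformal map is at most $\tfrac{1}{2}\log k$, for every $t \geq t_0$ we obtain
\[
d_\T\big(\gr(e^{2t}\lambda, \Xstd),\, \G(t, \lambda, Y_\lambda)\big)
 = d_\T\big(\gr(e^{2t}\lambda, \Xstd),\, \calE_{e^{2t}}\big)
 \leq \tfrac{1}{2}\log k,
\]
and the proposition follows with $\quasi = \tfrac{1}{2}\log k$. There is no real obstacle here beyond the bookkeeping; the one subtlety worth checking is that the marking on $\calE_{e^{2t}}$ coming from $F_{e^{2t}}$ agrees, up to isotopy, with the marking obtained by Teichm\"uller-deforming $(Y_\lambda, q_1)$ for time $t$. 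This is immediate from the construction, since both markings amount to the same affine stretching applied to the combinatorially identical rectangle patterns $\calR_s(\lambda)$.
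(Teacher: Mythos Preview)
Your proposal is correct and follows essentially the same approach as the paper: the proposition is stated there as a summary of the preceding construction, and your write-up simply makes explicit the choice $Y_\lambda = \calE_1$ and the bound $\quasi = \tfrac{1}{2}\log k$ coming from the $k$--quasiconformal map $F_s$ of \propref{prop:qc-calculation}. The marking-consistency check you flag is exactly the point the paper handles implicitly by observing that the family $q_{e^{2t}}$ is obtained from $q_1$ by the standard diagonal action.
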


This proposition is the final result in our study of the orthogonal
foliation and rectangle decomposition of a grafted surface, and it
provides the basic relation between Teichm\"uller geodesic rays and
grafting rays in the proof of the main theorem.

Before proceeding with this proof, however, we need to estimate the
effect (in terms of the Teichm\"uller metric) of moving the base point
of a grafting ray.  This is addressed in the next two sections, where
we discuss projective grafting and its extension to quasi-Fuchsian
groups, leading to the proofs of Theorems \ref{thm:extension} and
\ref{thm:Lipschitz}.  In \secref{sec:Conclusion}, these results will
be combined with \propref{prop:TheMap} in order to prove
\thmref{thm:Main} for surfaces without punctures.

\section{Projective structures, grafting, and bending}
\label{sec:projective}

In this section we collect some background material on complex
projective structures, grafting, and bending.  We also establish some
basic compactness results for projective structures and their
developing maps, which will be used in the proof of \thmref{thm:extension}.
We emphasize that in \secsref{sec:projective}{sec:Lipschitz} the argument 
does not depend on whether or not $S$ has punctures.

\subsection{Deformation space}

Let $\P(S)$ denote the deformation space of marked complex projective
structures on the surface $S$.  Each such structure is defined by an
atlas of charts with values in $\CP^1$ and transition functions in
$\PSL_2\C$.  If $S$ has punctures, we also require that a neighborhood
of each puncture is projectively isomorphic to a neighborhood of a
puncture in a finite-area hyperbolic surface (considered as a
projective structure, using a Poincar\'e conformal model of $\hyp^2$).
For background on projective structures, see \cite{gunning},
\cite{kamishima-tan}, \cite{tanigawa} \cite{survey}.

There is a forgetful projection map $\pi \from \P(S) \to \T(S)$, which
gives $\P(S)$ the structure of a complex affine vector bundle modeled
on the bundle $\qd(S) \simeq T^{1,0}\T(S)$ of integrable holomorphic quadratic
differentials.  We denote the fiber of $\qd(S)$ over $X \in \T(S)$ by
$Q(X)$.  A projective structure $Z$ with developing map $f \from
\Tilde{Z} \to \CP^1$ is identified with the quadratic differential
$S(f)$ on $\pi(Z)$, where
$$ S(f) = \left ( \frac{f''}{f'} \right )' - \frac{1}{2} \left (
  \frac{f''}{f'} \right )^2$$
is the Schwarzian derivative.  This \emph{Schwarzian parameterization}
gives $\P(S)$ the structure of a complex manifold of complex dimension
$3 | \chi(S) | = 2 \dim_\C \T(S)$.

\subsection{Projective grafting and holonomy}

A grafted Riemann surface carries a natural projective structure. A
local model for this projective grafting construction in the universal
cover of a hyperbolic surface $X$ is given by cutting the upper
half-plane $\hyp$ along $i \R^+$ and inserting a sector of angle $t$.
The quotient of this construction by a dilation $z \mapsto e^\ell z$
corresponds to inserting a cylinder of length $t$ and circumference
$\ell$ along the core geodesic $\gamma$ of a hyperbolic cylinder.
Using this model to define projective charts on a grafted surface
$\gr(t \gamma,X)$ gives a complex projective structure $\Gr(t\gamma,X)
\in \P(S)$.  As with grafting of complex structures, there is an
extension of this \emph{projective grafting map} to
$$ \Gr \from \ML(S) \times \T(S) \to \P(S),$$
which satisfies $\pi \circ \Gr = \gr$, i.e.~the underlying Riemann
surface of the projective structure $\Gr(\lambda,X)$ is
$\gr(\lambda,X)$.

For projective grafting with small weight along a simple closed curve,
the developed image of $\Tilde{\Gr(t \gamma,X)}$ is an open subset of
$\Hat{\C}$ obtained from $\Tilde{X} \simeq \hyp$ by inserting
\emph{$t$--lunes} along the geodesic lifts of $\gamma$, adjusting the
complementary regions in $\hyp^2$ by M\"obius transformations so that
these lunes and hyperbolic regions fit together.  The picture for
larger $t$ is locally similar, but on a larger scale the developing
map may fail to be injective.

Let $\X(S)$ denote the $\PSL_2(\C)$--character variety of $\pi_1(S)$,
that is,
$$ \X(S) = \Hom(\pi_1(S), \PSL_2(\C)) \sslash \PSL_2(\C), $$
where $\PSL_2(\C)$ acts on the variety $\Hom(\pi_1(S), \PSL_2(\C))$ by
conjugation of representations and $\sslash$ denotes the quotient
algebraic variety in the sense of geometric invariant theory (see
\cite{heusener-porti}, \cite[\S II.4]{morgan-shalen}).  Let $\hol \from
\P(S) \to \X(S)$ be the holonomy map, assigning to each projective
structure $Z$ the holonomy representation $\hol(Z) \from \pi_1(S) \to
\PSL_2(\C)$, well-defined up to conjugacy, which records the
obstruction to extending projective charts of $Z$ along homotopically
nontrivial loops.

\subsection{Bending}
\label{subsec:bending}
The composition of the grafting and holonomy maps,
\begin{equation*}
B = \hol \circ \Gr \from \ML(S) \times \T(S) \to \X(S)
\end{equation*}
is the \emph{bending map} (or \emph{bending holonomy map} in
\cite[\S2]{mcmullen}, a special case of the \emph{quakebend} of
\cite[Ch.~3]{epstein-marden}).  If $X \in \T(S)$ corresponds to a
Fuchsian representation $\rho_X \from \pi_1(S) \to \PSL_2\R$ preserving a
totally geodesic plane $\hyp^2 \subset \hyp^3$, then $B(\lambda,X)$ is
a deformation of this representation which preserves a pleated plane
$\pl \from \hyp^2 \to \hyp^3$ obtained by bending $\hyp^2 \simeq
\Tilde{X}$ along the lift of $\lambda$.

For later use, we describe this pleated plane explicitly in terms of
the data $(X,\lambda)$, first in the case where $\lambda = t \gamma$
is supported on a simple closed curve; see \cite{epstein-marden} for
further details.  Lift the closed geodesic $\gamma \subset X$ to a
family $\Tilde{\gamma}$ of complete hyperbolic geodesics in $\hyp^2$.
Given $(x,y) \in (\hyp^2 \setminus \Tilde{\gamma}) \times (\hyp^2
\setminus \Tilde{\gamma})$, let $\{g_1, \ldots g_n\} \subset
\Tilde{\gamma}$ denote the set of lifts of $\gamma$ that intersect the
hyperbolic geodesic segment $\overline{xy}$, ordered so that $g_1$ is
closest to $x$.  Let $(p_i, q_i) \in \Hat{\R} \times \Hat{\R}$ denote the ideal
endpoints of $g_i$, with $p_i$ chosen so as to lie to the left of the
oriented segment $\vec{xy}$.  Define the \emph{bending cocycle}
$$\beta_{\lambda,X}(x,y) = E_t(p_1,q_1) \cdots E_t(p_n,q_n)$$
where $E_\theta(p,q) \in \PSL_2(\C)$ is the elliptic M\"obius
transformation with fixed points $p,q$, rotating counter-clockwise
angle $\theta$ about $p$.  If $x$ and $y$ are contained in the same
component of $(\hyp^2 \setminus \Tilde{\gamma})$, then we define
$\beta_{\lambda,X}(x,y) = \identity$.

Thus the map $\beta_{\lambda,X} \from (\hyp^2 \setminus \Tilde{\gamma})
\times (\hyp^2 \setminus \Tilde{\gamma}) \to \PSL_2(\C)$ is locally constant in
each variable, with a discontinuity along each lift of $\gamma$, where
the values on either side of $g \in \Tilde{\gamma}$ differ by an
elliptic M\"obius transformation fixing the endpoints of $g$.  The
bending cocycle is related to the bending map $B$ as follows: choose a
basepoint $\base \in (\hyp^2 \setminus \Tilde{\gamma})$ and for each
$\alpha \in \pi_1(S)$ define
$$\rho_{X,\lambda}(\alpha) = \beta_{\lambda,X}(\base, \rho_X(\alpha)
\base) \cdot \rho_X(\alpha).$$
Then $B(\lambda,X)$ and $\rho_{X,\lambda}$ are conjugate, i.e.~they
represent the same point in $\X(S)$.

The developing map of $\Gr(t \gamma,X)$ has a similar description in
terms of the bending cocycle:  We define $f \from (\hyp^2 \setminus
\Tilde{\gamma}) \to \Hat{\C}$ by
$$f(y) = \beta_{\lambda,X}(\base, y) \cdot y$$
where in this formula, the M\"obius map $\beta_{\lambda,X}(\base,y)$
  acts on the upper half-plane $\hyp^2$ (and thus on $y$) by the usual
  linear fractional transformation.  Unlike the pleating map, the map
  $f$ is discontinuous along each lift of $\gamma$, where it omits a
  $t$--lune in $\Hat{\C}$.  The developing map of $\Gr(\lambda,X)$
  fills in these lunes with developing maps for the projective annulus
  $\gamma \times [0,t]$.

  The bending map $B$, bending cocycle $\beta$, and the above
  description of the developing map all extend to the case of a general
  measured lamination $\lambda \in \ML(S)$.

\subsection{Quasi-Fuchsian bending}

Let $\QF(S) \subset \X(S)$ denote the quasi-Fuchsian
space of $S$, consisting of conjugacy classes of faithful
quasi-Fuchsian representations of $\pi_1(S)$.  By the Bers
simultaneous uniformization theorem, we have a biholomorphic
parameterization
$$ Q \from \T(S) \times \bar{\T(S)} \to \QF(S). $$
With respect to this parameterization, the space $\F(S) \subset
\QF(S)$ of Fuchsian groups is exactly the diagonal $\{ Q(X,\bar{X}) \:
| \: X \in \ts \}$, and this is a properly embedded totally real
submanifold of maximal dimension.

By the uniformization theorem, we can identity the \Teich space with
the space of Fuchsian groups, $\T(S) \simeq \F(S)$.  We use this
identification to regard projective grafting and bending as maps
defined on $\ML(S) \times \F(S)$.  Kourouniotis
\cite{kourouniotis:bending} showed that the bending map extends
naturally to a continuous map
$$ B \from \ML(S) \times \QF(S) \to \X(S)$$
which is holomorphic in the second factor (since it is the flow of a
holomorphic vector field \cite[Thm.~3]{kourouniotis:continuity}, see
also \cite[\S4]{goldman:complex-symplectic}).

\subsection{Developing maps and compactness}

In the next section, we will need a compactness criterion for sets of
complex projective structures.  Let $Z \in \P(S)$ be a marked complex
projective structure.  An open set $U \subset Z$ \emph{develops
  injectively} if the developing map of $Z$ is injective on any lift
of $U$ to the universal cover.

By the uniformization theorem, the marked complex structure $\pi(Z)
\in \T(S)$ underlying a projective structure $Z \in \P(S)$ is
compatible with a unique hyperbolic metric on $Z$ up to isotopy.  We
say that $Z$ has an \emph{injective $r$--disk} if there is an open disk
in $Z$ of radius $r$ (with respect to this hyperbolic metric) that
develops injectively.  Note that we assume here that $r$ is less than
the hyperbolic injectivity radius of $Z$.

The following lemma is essentially an adaptation of Nehari's estimate
for univalent functions \cite{nehari}:

\begin{lemma}
\label{lem:compactness}
For any compact set $K \subset \T(S)$ and any $\delta > 0$, the set of
projective structures $Z \in \pi^{-1}(K) \subset \P(S)$ that contain
an injective $\delta$--disk is compact.
\end{lemma}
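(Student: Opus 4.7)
The plan is to reduce the question to compactness of Schwarzian derivatives via the Schwarzian parameterization $\P(S) \simeq \qd(S)$, apply Nehari's classical bound on the Schwarzian of a univalent function on each injective $\delta$--disk, and then bootstrap from a local bound on the disk to a global bound using finite-dimensionality of the fibers $Q(X)$. Given a sequence $Z_n$ in the set in question, I write $Z_n = (X_n, q_n)$ with $X_n = \pi(Z_n) \in K$ and $q_n \in Q(X_n)$, and let $D_n \subset Z_n$ be an injective $\delta$--disk centered at a point $p_n$. Using compactness of $K$ and of the universal curve over $K$, I pass to a subsequence so that $X_n \to X_\infty \in K$ and $p_n \to p_\infty \in X_\infty$; the hyperbolic disks $D_n$ then converge smoothly to a $\delta$--disk $D_\infty$ about $p_\infty$.

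Next I would invoke Nehari's theorem. Lifting to $\Tilde X_n \simeq \hyp^2$, the developing map $f_n$ of $Z_n$ is univalent on a lift $\Tilde D_n$ of $D_n$. Pre-composing with a M\"obius isomorphism of the unit disk onto $\Tilde D_n$ and applying the classical Nehari inequality yields a pointwise bound, in the hyperbolic norm of $X_n$, on the Schwarzian $q_n = S(f_n)$ over the concentric disk $D_n'$ of half the radius, with a constant depending only on $\delta$ (coming from the ratio of the intrinsic hyperbolic metric of $\Tilde D_n$ to the ambient metric from $\hyp^2$).

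To upgrade this local bound to sequential compactness of $\{q_n\}$, I would use that $\qd \to \T(S)$ is a finite-rank holomorphic vector bundle, trivializable over the compact set $K$. For any connected Riemann surface $X$ and any nonempty open disk $D \subset X$, the supremum of the hyperbolic norm of $q$ over $D$ defines a genuine norm on $Q(X)$ by the identity principle, and depends continuously on the pair $(X, D)$. Since $(X_n, D_n')$ lies in a convergent (hence compact) family, these norms are uniformly equivalent to a fixed reference norm coming from a trivialization of $\qd$ near $X_\infty$; hence $\{q_n\}$ is bounded in the bundle, and a further subsequence yields $q_n \to q_\infty \in Q(X_\infty)$.

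It remains to check that $Z_\infty = (X_\infty, q_\infty)$ still contains an injective $\delta$--disk. After normalizing the developing maps by post-composition with $\PSL_2(\C)$ at $\Tilde p_n$, the sequence $f_n$ converges locally uniformly to the developing map $f_\infty$ of $Z_\infty$, and each $f_n$ is univalent on $\Tilde D_n$, with $\Tilde D_n \to \Tilde D_\infty$. Hurwitz's theorem then forces $f_\infty$ to be univalent on $\Tilde D_\infty$, the normalization ruling out constancy, so $Z_\infty$ lies in our set. The main obstacle I anticipate is the bootstrap step: one must carefully set up the family of fiberwise norms on $Q(X_n)$ coming from the moving disks $D_n'$ and verify their uniform equivalence to a reference norm, which requires both the continuous dependence of the local sup-norm on the pair $(X, D)$ and the compactness of the relevant bundle of disks over $K$.
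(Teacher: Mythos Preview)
Your proposal is correct and follows the same core idea as the paper---Nehari's bound on the Schwarzian over the injective disk, then a finite-dimensionality argument to pass from a local bound to a global one---but the execution differs in two places worth noting.

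First, where you use the \emph{sup} of the hyperbolic norm $|q|/\rhohyp^2$ over the half-disk $D_n'$ and then invoke equivalence of norms on the finite-dimensional fibers $Q(X)$ (via the identity principle and continuity in $(X,D)$), the paper instead integrates Nehari's pointwise bound $|\phi| \le \tfrac32 \rho_D$ over the half-disk to get an $L^1$ bound, and then uses a direct ``mass concentration'' inequality
\[
\|\phi\|_{L^1(D)} \ge m(K,\delta)\,\|\phi\|_{L^1(X)}
\]
valid for any $\delta$--disk $D$ in any $X \in K$. This yields an explicit uniform bound on $\|\phi\|_{L^1(X)}$ in one stroke, avoiding the moving family of sup-norms you flagged as the delicate step. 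Your route is fine, but the $L^1$ version is cleaner precisely because the right-hand side of Nehari integrates to a quantity depending only on $\delta$.

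Second, the paper simply asserts that the set in question is closed and only proves boundedness; you actually supply the closedness argument via Hurwitz on the normalized developing maps. So your proof is more complete on that point, while the paper's is shorter on the boundedness.
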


\begin{proof}
  The set of such projective structures is closed, so we need only
  show that it is contained in a compact subset of $\P(S)$.

  Because $K \subset \T(S)$ is compact, the integrable quadratic differentials on
  the Riemann surfaces in $K$ have a definite amount of mass in each
  $\delta$--disk, i.e.~there exists a constant $m(K,\delta) > 0$ such
  that
\begin{equation}
\label{eqn:l1bound}
 \| \phi \|_{L^1(D)} \geq m(K,\delta) \| \phi\|_{L^1(X)}
\end{equation}
for all $X \in K$, $\phi \in Q(X)$, and any open disk $D \subset X$ of
hyperbolic radius $\delta$.  Here $\|\cdot\|_{L^1}$ is the conformally
natural norm on quadratic differentials:
$$ \| \phi \|_{L^1(U)} = \int_U |\phi|.$$

By Nehari's theorem, if $Z$ is a projective structure with Schwarzian
differential $\phi$, then on any open set $U \subset Z$ that develops
injectively, we have
\begin{equation}
\label{eqn:nehari}
| \phi | \leq \frac{3}{2} \rho_U
\end{equation}
where $\rho_U$ is the area element of the Poincar\'e metric of $U$.  

Now suppose that $Z$ contains an injective $\delta$--disk $D$, and let
$\frac{1}{2}D$ denote the concentric disk with radius $\delta/2$ with
respect to the hyperbolic metric of $Z$.  Applying \eqref{eqn:l1bound}
to $\frac{1}{2}D$ and \eqref{eqn:nehari} to $D$, we have
\begin{equation}
\label{eqn:uniform-norm-bound}
\begin{split}
 \|\phi\|_{L^1(X)} &\leq \frac{1}{m(K,\delta/2)} \| \phi
 \|_{L^1(\frac{1}{2}D)}\\
& \leq
\frac{1}{m(K,\delta/2)} \int_{\frac{1}{2}D}  \frac{3}{2} \rho_D
= \frac{3 \area(\frac{1}{2}D, \rho_D)}{2 m(K,\delta/2)}
\end{split}
\end{equation}
where we use the notation $\area(\Omega,\rho)$ for the area of a set
$\Omega$ with respect to the area form $\rho$.  The quantity
$A(\delta) = \area(\frac{1}{2}D, \rho_D)$ depends only on $\delta$,
and using elementary hyperbolic geometry we find
$$ A(\delta) = \frac{2 \pi (1 + \cosh(\delta))}{1 + 2 \cosh \left (
    \frac{\delta}{2} \right )}.$$
Therefore, \eqref{eqn:uniform-norm-bound} gives a uniform upper bound
$N(\delta,K) = 3 A(\delta) / (2 m(K,\delta/2))$ on the
$L^1$ norm of $\phi$.

Since $\|\param\|_{L^1(X)}$ gives a continuously varying norm on the
fibers of $\qd(S) \simeq \P(S)$, the union of the closed
$N(\delta,K)$--balls over the compact set $K \subset \T(S)$ is
compact.
\end{proof}

\subsection{Hyperbolic geometry of grafting}
\label{sec:hyperbolic}

By construction, the grafted surface $\gr(t \gamma,X)$ has an open
subset that is naturally identified with $(X \setminus \gamma)$.  In
this subsection with study the geometry of this open set with respect
to the hyperbolic metric of $\gr(t \gamma,X)$.  We do so by comparing
the hyperbolic metric on a grafted surface with the \emph{Thurston
  metric}, which is obtained by gluing the Euclidean metric of $\gamma
\times [0,t]$ to the hyperbolic metric of $X$ (see \cite[\S2.1]{tanigawa}).

Since the Thurston metric is conformally equivalent to the hyperbolic
metric on $\gr(t \gamma,X)$, its length element can be expressed as
$\rhoth = e^u \rhohyp$, where $\rhohyp$ is the hyperbolic
length element and $u$ is a real-valued function.  The
Gaussian curvature of the Thurston metric is well-defined except on
the boundary of the grafting cylinder, and wherever defined it is
equal to $0$ or $-1$.  These bounds on the curvature correspond to the
density function $u$ weakly satisfying
\begin{equation}
\label{eqn:curvature}
-1 \leq \hyplap u \leq -1 + e^{2u}
\end{equation}
where $\hyplap = \frac{4}{\rhohyp}\partial\bar{\partial}$ is the
Laplace-Beltrami operator of the hyperbolic metric on $\gr(t\gamma,X)$
(see \cite{huber}).  Approximating $u$ by a smooth function and
considering its Laplacian at a minimum, it follows easily from the
right hand inequality of \eqref{eqn:curvature} that $u \geq 0$,
i.e.~the Thurston metric is pointwise larger than the hyperbolic
metric.  Thus $u \leq e^u$, and since the area of the Thurston metric
is $A = \int_{\gr(t \gamma,X)} e^{2u} \rhoth^2 = \left ( \| e^u
\|_2\right )^2$, we have
\begin{equation}
\label{eqn:area}
\| u \|_2 \leq \| e^u \|_2 \leq A^{\frac{1}{2}}.
\end{equation}

These area and curvature considerations are sufficient to give a
pointwise upper bound for $u$:

\begin{lemma}
\label{lem:boundabove}
If $Y$ is a compact hyperbolic surface and $u : Y \to \R^+$ is a
function satisfying inequalities \eqref{eqn:curvature} and \eqref{eqn:area},
then we have
$$ \sup u \leq M(A,Y)$$
where $M$ is a continuous function of $A \in \R$ and of the image of
$Y$ in moduli space.
\end{lemma}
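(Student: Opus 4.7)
The plan is to use the lower Laplacian bound $\hyplap u \geq -1$ from \eqref{eqn:curvature} to make $u$ behave like a subharmonic function on small hyperbolic disks, and then to couple this with the bound $\int_Y e^{2u} \leq A$ (equivalent to \eqref{eqn:area}) via Jensen's inequality applied to the convex function $x \mapsto e^{2x}$.

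First I would fix an arbitrary $p \in Y$, lift $u$ to the universal cover $\hyp^2$, and choose a radius $r$ less than the injectivity radius of $Y$, so that the hyperbolic disk $B_r(p) \subset \hyp^2$ embeds isometrically into $Y$. Let $v$ be the radially symmetric solution of $\hyplap v = -1$ on $B_r(p)$ with $v \equiv 0$ on $\partial B_r(p)$. Solving the associated ODE in geodesic polar coordinates gives $v(p) = 2\log\cosh(r/2)$, and $v \geq 0$ on $B_r(p)$ by the maximum principle. Since $\hyplap(u - v) \geq 0$ holds weakly, $u - v$ is subharmonic, and the ball mean value inequality on hyperbolic disks combined with $v \geq 0$ yields
$$
\frac{1}{A_{\mathrm{hyp}}(r)} \int_{B_r(p)} u \, dA_{\mathrm{hyp}} \ \geq \ u(p) - 2\log\cosh(r/2),
$$
where $A_{\mathrm{hyp}}(r) = 2\pi(\cosh r - 1)$ is the hyperbolic area of a disk of radius $r$.

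Applying Jensen's inequality to $x \mapsto e^{2x}$ and using that $B_r(p) \hookrightarrow Y$ would then give
$$
A \ \geq \ \int_{B_r(p)} e^{2u} \, dA_{\mathrm{hyp}} \ \geq \ A_{\mathrm{hyp}}(r) \, \exp\!\left( \frac{2}{A_{\mathrm{hyp}}(r)} \int_{B_r(p)} u \, dA_{\mathrm{hyp}} \right) \ \geq \ \frac{A_{\mathrm{hyp}}(r) \, e^{2 u(p)}}{\cosh(r/2)^{4}},
$$
so $u(p) \leq 2 \log\cosh(r/2) + \tfrac{1}{2} \log\bigl( A/A_{\mathrm{hyp}}(r) \bigr)$. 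Taking $r = \min(\mathrm{inj}(Y), 1)$ produces a bound $M(A,Y)$ continuous in $A$ and in $\mathrm{inj}(Y)$, and since the injectivity radius is continuous on moduli space, this gives the desired dependence.

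The main technical point I expect to need to address is justifying the subharmonic mean value inequality for the weak subsolution $u - v$. Since the two inequalities in \eqref{eqn:curvature} together with $e^{2u} \in L^1(Y)$ imply that $\hyplap u \in L^1(Y)$, standard elliptic regularity and approximation by smooth subharmonic functions provide the required smoothness. It is worth remarking that the upper bound in \eqref{eqn:curvature} enters the proof only indirectly, through its role in establishing $u \geq 0$ and the area bound \eqref{eqn:area}; the bulk of the argument rests solely on the lower Laplacian bound $\hyplap u \geq -1$ and the $L^1$ control on $e^{2u}$.
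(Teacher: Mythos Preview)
Your argument is correct and genuinely different from the paper's. The paper invokes the Di~Giorgi--Nash--Moser local maximum principle as a black box: from $\hyplap u \geq -1$ and $u \geq 0$ it reads off $\sup u \leq C(Y)\bigl(\|u\|_2 + \|1\|_2\bigr)$, and then substitutes the $L^2$ bound $\|u\|_2 \leq A^{1/2}$ from \eqref{eqn:area}. Your route is more elementary and more explicit: you correct $u$ by the radial solution $v$ of $\hyplap v = -1$ on an embedded disk to produce a genuine subharmonic function, apply the sub-mean-value property, and then exploit the exponential via Jensen and the $L^1$ bound $\int_Y e^{2u} \leq A$ directly, bypassing $\|u\|_2$ entirely. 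What your approach buys is an explicit bound depending on $Y$ only through the injectivity radius, whereas the paper's constant $C(Y)$ comes from Moser iteration and is less transparent; what the paper's approach buys is brevity if one is willing to cite the elliptic estimate. Both dependencies descend to moduli space.

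Two small remarks. First, your justification of the mean-value inequality is slightly over-engineered: on a geodesic ball in $\hyp^2$ the harmonic measure at the center is rotationally invariant, so the sphere (and hence ball) sub-mean-value inequality holds for any distributional subsolution without needing $\hyplap u \in L^1$. Second, your closing remark slightly undersells your own argument: you never actually use $u \geq 0$ (you use $v \geq 0$, a property of your explicit comparison function), so in fact only the lower Laplacian bound and the $L^1$ control on $e^{2u}$ are needed.
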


An equivalent geometric statement of this lemma is: \emph{On any
  compact subset of moduli space, the conformal metrics with bounded
  area and with curvatures pinched in $[-1,0]$ are uniformly bounded
  relative to the associated hyperbolic metrics.}

\begin{proof}
  For any compact Riemannian manifold $(M,g)$ and any nonnegative
  function $u$ with square-integrable derivatives satisfying $\Delta_g
  u \geq f$, we have the Di~Giorgi-Nash-Moser maximum principle (see
  e.g.~\cite[\S4.2]{han-lin} \cite[Thm.~9.20]{gilbarg-trudinger}):
\begin{equation}
\label{eqn:moser}
 \sup u \leq C(M,g) \left ( \|u\|_2 + \|f\|_2 \right ),
\end{equation}
where $C(M,g)$ is a constant that for fixed $M$ can be taken to vary
continuously with $g$.

We apply this to the function $u$ in the statement of the lemma, using
the hyperbolic metric of $Y$, so $C(M,g) = C(Y)$.  By
\eqref{eqn:curvature} we can use $f \equiv -1$ and
$$ \left (\| f \|_2\right)^2 = \area(Y,\rhohyp) = 2 \pi |\chi(Y)|.$$
Substituting this and \eqref{eqn:area} into the right hand side of
\eqref{eqn:moser} gives
$$ \sup u \leq C(Y) \left ( A^\frac{1}{2} + \left (2 \pi
    |\chi(Y)|\right)^{\frac{1}{2}} \right ) = M(A,Y).$$
\end{proof}

\noindent The following corollary relates \lemref{lem:boundabove} to the
geometry of grafting.

\begin{corollary}
\label{cor:injectiveball}
  There exists a continuous positive function $r : \ML(S) \times \T(S)
  \to \R^+$ such that for any $(t \gamma,X) \in \ML(S) \times \T(S)$,
  the image of $(X \setminus \gamma)$ in $\gr(t \gamma,X)$ contains a
  ball of radius $r(t \gamma,X)$ with respect to the hyperbolic metric
  on $\gr(t \gamma,X)$.
\end{corollary}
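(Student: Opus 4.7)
The plan is to apply \lemref{lem:boundabove} to the conformal density $u$ that compares the Thurston and hyperbolic metrics on the grafted surface $Y = \gr(\lambda,X)$, and to then convert the resulting pointwise bound on $u$ into a containment relation between hyperbolic balls of $Y$ and Thurston balls (which are easy to control on the image of $X\setminus|\lambda|$).

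First, I would verify that the hypotheses of \lemref{lem:boundabove} hold for $u = \log(\rhoth/\rhohyp)$ on $Y = \gr(\lambda,X)$, with $Y$-data varying continuously in $(\lambda, X)$. The curvature inequalities \eqref{eqn:curvature} hold by the construction of the Thurston metric, and the total Thurston area $A(\lambda,X) = 2\pi|\chi(S)| + \ell_X(\lambda)$ (which gives the $L^2$ bound \eqref{eqn:area}) depends continuously on $(\lambda,X)$. Since the image of $\gr(\lambda,X)$ in moduli space likewise varies continuously, \lemref{lem:boundabove} yields a continuous function $M(\lambda,X)$ with $\sup_Y u \leq M(\lambda,X)$. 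Equivalently, $\rhoth \leq e^M \rhohyp$ pointwise, hence $d_{Th}(p,q) \leq e^{M} \, d_{hyp}(p,q)$ for all $p,q \in Y$.

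Next, I would exploit the fact that on the open subset $X\setminus|\lambda| \subset Y$ the Thurston metric coincides with the restriction of the hyperbolic metric of $X$, and that every path in $Y$ from $X\setminus|\lambda|$ into a grafting region must cross $|\lambda|$. Consequently, if $p \in X\setminus|\lambda|$ and
\[
d_{0}(\lambda,X) \;=\; \sup_{p \in X\setminus|\lambda|} d_{X}\bigl(p,\,|\lambda|\bigr)
\]
is attained at $p$, then the Thurston ball $B_{Th}(p,d_0)$ in $Y$ is entirely contained in the image of $X\setminus|\lambda|$. The function $d_0(\lambda,X)$ is continuous and strictly positive (since $|\lambda|$ is a geodesic lamination, hence a closed nowhere-dense subset of $X$).

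Combining the two previous steps, the hyperbolic ball $B_{hyp}(p,\, d_0 \, e^{-M})$ in $Y$ is contained in the Thurston ball $B_{Th}(p,d_0)$, which is contained in the image of $X\setminus|\lambda|$. Setting
\[
r(\lambda,X) \;=\; d_0(\lambda,X)\, e^{-M(\lambda,X)}
\]
gives the desired continuous positive function. The only subtle point is verifying continuity of $d_0$ and of $M$ as $(\lambda,X)$ varies in $\ML(S)\times\T(S)$, which I expect to be the main technical step; it follows from the continuous dependence of the hyperbolic length $\ell_X(\lambda)$, of the support $|\lambda|$ in the Hausdorff topology on closed subsets of $X$, and of the conformal class of $\gr(\lambda,X)$ on the parameters.
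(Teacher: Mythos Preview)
Your overall strategy matches the paper's: bound $e^u$ via \lemref{lem:boundabove}, then transfer a Thurston-metric ball inside $X\setminus|\lambda|$ to a hyperbolic ball by the factor $e^{-M}$. The problem is your continuity claim for $d_0(\lambda,X)$. The support $|\lambda|$ does \emph{not} depend continuously on $\lambda$ in the Hausdorff topology when $\ML(S)$ carries the measure topology (the paper itself remarks on this in \secref{sec:stability}). For a concrete failure, take $\lambda_n=\gamma_1+\tfrac1n\gamma_2$ with $\gamma_1,\gamma_2$ disjoint simple closed geodesics: then $\lambda_n\to\gamma_1$ in $\ML(S)$, but $|\lambda_n|=\gamma_1\cup\gamma_2$ for all $n$, so $d_0(\lambda_n,X)$ is constant and in general strictly smaller than $d_0(\gamma_1,X)$. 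Thus your proposed $r(\lambda,X)=d_0(\lambda,X)e^{-M(\lambda,X)}$ is not continuous on $\ML(S)\times\T(S)$, even though the corollary requires a function continuous on the whole space.

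The paper sidesteps this by replacing $d_0(\lambda,X)$ with a \emph{universal} positive lower bound: every complete hyperbolic surface with geodesic boundary contains an embedded ideal triangle, so one can always find $x_0\in X\setminus\gamma$ with $d_X(x_0,\gamma)\ge r_0$, where $r_0$ is the inradius of an ideal triangle. Setting $r(\lambda,X)=r_0/\exp\bigl(M(A(\lambda,X),\gr(\lambda,X))\bigr)$ then gives a manifestly continuous function, since it depends on $\lambda$ only through the hyperbolic length $\ell_X(\lambda)$ (via the area $A$) and through the moduli-space image of $\gr(\lambda,X)$, both of which are continuous on all of $\ML(S)\times\T(S)$. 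Your argument is easily repaired by making this substitution.
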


\begin{proof}
  An equivalent statement is that there exists a point in $\gr(t
  \gamma,X)$ whose distance from the grafting cylinder $A$ is at least
  $r(t \gamma,X)$.

  Let $x_0 \in (X \setminus \gamma)$ be a point whose distance from
  $\gamma$ in the hyperbolic metric of $X$ is at least $r_0$, where
  $r_0$ is the radius of the inscribed disk of a hyperbolic ideal
  triangle.  Such a point always exists, since every complete
  hyperbolic surface with geodesic boundary contains an isometrically
  embedded ideal triangle.  We identify $x_0$ with its image in
  $\gr(t \gamma,X)$, which is a point whose Thurston distance from the
  grafting cylinder $A$ is at least $r_0$.

  We will show that the distance $\dhyp(x_0,A)$ from $x_0$ to $A$ with
  respect to the hyperbolic metric on $\gr(t \gamma, X)$ is also bounded below.

  The Thurston metric on $\gr(t \gamma,X)$ is obtained by gluing a
  hyperbolic surface of area $2 \pi |\chi(S)|$ and a cylinder of area
  $t \ell_X(\gamma)$.  Therefore we have
$$ \area(X,\rhoth) = 2 \pi |\chi(S)| + t \ell_X(\gamma) =: A(t \gamma,X).$$
By Lemma \ref{lem:boundabove}, this area bound and the curvature of the
Thurston metric imply that
$$ \sup e^u \leq \exp(M(A(t \gamma,X),\gr(t \gamma,X))).$$
Since the length function is continuous on $\ML(S) \times \T(S)$, and
$\gr : \ML(S) \times \T(S) \to \T(S)$ is continuous, the right hand
side of this estimate is a continuous function $f(t \gamma,X)$.

Let $\alpha$ be a minimizing geodesic arc from $x_0$ to $\partial A$
with respect to the hyperbolic metric of $\gr(t \gamma,X)$.  Then the
length of this arc with respect to the Thurston metric is
$$ \int_\alpha e^u \rhohyp \leq (\sup e^u) \int_\alpha \rhohyp \leq
f(t \gamma,X) \dhyp(x_0, A).$$
Since $x_0$ was chosen so that this length is at least $r_0$, we
have shown that
\begin{equation*}
\dhyp(x_0,A) \geq \frac{r_0}{f(t \gamma,X)}. \qedhere
\end{equation*}
\end{proof}

\section{Grafting is Lipschitz}  \label{sec:Lipschitz}

In this section we prove Theorems~\ref{thm:extension} and
\ref{thm:Lipschitz}, after developing some preliminary results about
quasidisks and quasi-Fuchsian groups.

\subsection{Extension theorem}
\label{subsec:extension}
Let $\QF_\delta(S) \subset \QF(S)$ denote the set of
\emph{$\delta$--almost-Fuchsian groups}, i.e.~quasi-Fuchsian
groups of the form $Q(X,Y)$ where $d_{\T}(X,Y) < \delta$.  Thus
$\QF_\delta(S)$ is a connected, contractible and open neighborhood of
$\F(S)$ in $\QF(S)$.

Let $d_{\QF}$ denote the Kobayashi distance function on 
$$
\QF(S) \simeq\T(S) \times \bar{\T(S)}.
$$  T
he Kobayashi metric on $\T(S)$ (or
$\bar{\T(S)}$) is equal to the Teichm\"uller metric, and the Kobayashi
metric on a product of manifolds is the sup-metric.  Thus we can also
describe $\QF_\delta(S)$ in terms of $d_{\QF}$:
$$ 
\QF_\delta(S) = \big\{ \rho \in \QF(S) \: | \: d_{\QF}(\rho,\F(S)) 
  < \delta \big\}.
$$

As explained in the introduction, our first goal in this section is to
show that 
$$
\Gr\from \ML(S) \times \F(S) \to \P(S)
$$ 
extends holomorphically to $\QF_\Extension(S)$.  The local existence of 
an extension $\Gr(\lambda,\param)$ near $\F(S)$ is clear: The map 
$\Gr_\lambda\from \F(S) \to \T(S)$ is known to be real-analytic (see
\cite{scannell-wolf}\cite{mcmullen}), so it has a holomorphic
extension in a small neighborhood of the totally real manifold $\F(S)
\subset \QF(S)$.  Alternatively, the map $\hol : \P(S) \to \X(S)$ is a
local biholomorphism, so the quasi-Fuchsian bending map $B_\lambda:
\QF(S) \to \X(S)$ constructed by Kourouniotis (see
\cite{kourouniotis:bending}) can be locally lifted through $\hol$ to
define $\Gr(\lambda,\param)$.

Unfortunately it seems difficult to control the domain of definition
of the extensions that arise from these considerations. If using
real-analyticity, one would need to control the domain of convergence
of a series representation for the grafting map, or to analyze its
analytic continuation.  When using quasi-Fuchsian bending, the failure
of the holonomy map to be a topological covering (\cite{hejhal}) is a potential
obstruction to lifting the bending map to $\P(S)$ beyond a
small neighborhood of a given point in $\T(S)$.

However for our purposes the uniformity of this extension as $X$ and
$\lambda$ vary (i.e.~the existence of the universal constant
$\Extension$) is essential since extension on a smaller neighborhood of
a point in $\T(S)$ corresponds to a larger Lipschitz constant in the
Kobayashi metric argument of \secref{subsec:kobayashi}.  We will
establish such uniformity using a geometric property of $K$--quasi-disks
with $K \approx 1$, however we do not know whether the restriction to
small $K$ is strictly necessary here.  It is natural to ask:
\begin{question}
  Does projective grafting extend to all quasi-Fuchsian groups, or
  further to an open subset $\X(S)$ properly containing $\QF(S)$?
\end{question}
This question can also be interpreted in terms of the domain of
integrability of an incomplete holomorphic vector field on $\P(S)$,
see \cite{goldman:complex-symplectic}.

\subsection{Quasidisks}
The constant $\Extension$ in our proof of \thmref{thm:extension}
comes from the following lemma about quasidisks.  Note that this
constant is independent of the topological type of $S$.

\begin{lemma}
\label{lem:universal-K}
  There exists $\Extension > 0$ with the following property: Let $\Omega$
  be a $K$--quasidisk, where $K < \frac{1+\Extension}{1-\Extension}$.  Let $p,q
  \in \partial \Omega$, and denote by $\gamma(t)$ the Poincar\'e
  geodesic with ideal endpoints $p,q$, parameterized by arc length.
  Then the map $T : \C \to \Hat{\C}$ defined by
$$ T(x + iy) = E_y(p,q) \cdot \gamma(x)$$ is locally
$2$--quasiconformal.
\end{lemma}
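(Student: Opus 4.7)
By post-composing $T$ with a M\"obius transformation sending $p$ to $0$ and $q$ to $\infty$ (which leaves the local dilatation of $T$ and the quasidisk constant of $\Omega$ unchanged), we may assume $p=0$ and $q=\infty$. Then $E_y(p,q)\cdot z = e^{iy}z$ and $T(x+iy) = e^{iy}\gamma(x)$. Writing $T = \exp \circ L$ with $L(x+iy) = \log\gamma(x) + iy$, and noting that $\exp$ is a local biholomorphism, the local dilatation of $T$ equals that of $L$. A direct computation with $w(x) := \gamma'(x)/\gamma(x)$ yields
$$
\partial L = \tfrac{1}{2}(w+1), \qquad \bar\partial L = \tfrac{1}{2}(w-1), \qquad \mu_L = \frac{w(x)-1}{w(x)+1}.
$$
In the round model $\Omega = \hyp^2$, $\gamma(x) = ie^x$ and $w\equiv 1$, so $\mu_L\equiv 0$. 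Since the bound $|\mu_L|\leq 1/3$ (equivalent to local dilatation $\leq 2$) is implied by $|w(x)-1|\leq 1/4$, it suffices to bound $|w-1|$ uniformly in $x\in\R$.

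Next, I would use a M\"obius symmetry to reduce the uniform bound over $\R$ to a pointwise estimate. Given $x_0\in\R$, the M\"obius transformation $N(z) = iz/\gamma(x_0)$ fixes $0$ and $\infty$ and sends $\gamma(x_0)$ to $i$; it carries $\Omega$ to another $K$-quasidisk $\tilde\Omega$, and as a hyperbolic isometry it takes $\gamma$ to the arc-length geodesic $\tilde\gamma = N\gamma$ of $\tilde\Omega$ from $0$ to $\infty$. A short calculation gives $\tilde w(x_0) = w(x_0)$. After also shifting the arc-length parameter so that the base point is $0$, the problem reduces to bounding $|w(0)-1|$ over the family $\mathcal Q_K$ of $K$-quasidisks $\Omega$ with $0,\infty\in\partial\Omega$ and $\gamma(0) = i$.

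Finally, I would establish this uniform estimate by a continuity argument on $\mathcal Q_K$. The upper half-plane is the unique round disk in $\mathcal Q_1$, and for $\Omega\in\mathcal Q_K$ the Riemann map $f\colon \hyp^2\to\Omega$ normalized by $f(0)=0$, $f(\infty)=\infty$, $f(i)=i$ satisfies $\gamma(x) = f(ie^x)$, so $w(0) = f'(i)$. Standard quasiconformal deformation theory --- specifically the bound on the Schwarzian derivative of the Riemann map of a $K$-quasidisk, together with compactness of the resulting normal family of normalized Riemann maps --- shows that $f \to \identity$ in $C^\infty$ on compact subsets of $\hyp^2$, uniformly over $\mathcal Q_K$, as $K\to 1$. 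In particular $f'(i) \to 1$ uniformly, so one may choose $\Extension>0$ such that $K < (1+\Extension)/(1-\Extension)$ forces $|w(0)-1|<1/4$, which yields the lemma.

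The main subtlety is the uniformity in this last step: while pointwise continuity of $f \mapsto f'(i)$ on the space of normalized Riemann maps is standard, obtaining a quantitative bound purely in terms of the quasidisk constant $K$ of $\Omega$ requires translating the compactness argument into an explicit smallness condition, which is where the hypothesis that $K$ is close to $1$ --- matching the form $K < (1+\Extension)/(1-\Extension)$ with $\Extension$ small --- becomes essential.
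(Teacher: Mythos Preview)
Your proof is correct and follows essentially the same route as the paper: normalize to $p=0$, $q=\infty$, reduce via M\"obius symmetry to estimating the dilatation at a single point, express that dilatation as $(f'(i)-1)/(f'(i)+1)$ for the normalized Riemann map $f$, and then argue $f'(i)\to 1$ uniformly as $K\to 1$. The only notable differences are cosmetic---you factor $T=\exp\circ L$ to compute the Beltrami coefficient and spell out the $x$-reduction more carefully, while the paper handles the final limiting step with a concrete Carath\'eodory convergence argument (the boundary of a normalized $K$-quasidisk lies in a $\log K$--neighborhood of $\hat\R$ in the Poincar\'e metric of $\hat\C\setminus\{0,i,\infty\}$) rather than the general compactness principle you invoke.
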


Recall from \secref{subsec:bending} that $E_t(p,q)$ is the elliptic
M\"obius transformation fixing $p$ and $q$, and rotating
counterclockwise about $p$ by angle $t$.

\begin{proof}[Proof of \lemref{lem:universal-K}.]
We need to determine a value of $\Extension$ such that $T$ is a local
diffeomorphism and that its dilatation is bounded by $1/3$ (that is,
it is locally $2$--quasiconformal).

In fact we need only study the derivative of $T$ along $y=0$,
since $T(z)$ and $T(z + i y)$ differ by composition with an
elliptic M\"obius transformation, leaving the dilatation (and the property of being
locally diffeomorphic) invariant.

Also note that the condition we wish to establish is invariant under
applying M\"obius transformations to $\Omega$, so we can assume
$p=0$, $q=\infty$ and that $i \in \Omega$ lies on the Poincar\'e geodesic
(parameterized so $\gamma(0) = i$), and we must show that $T$ is
a local diffeomorphism at $0$, and its dilatation at $0$ is bounded by
$1/3$.

Consider the Riemann map $f : \hyp \to \Omega$ normalized to fix
$\{0,i,\infty\}$.  Then we have
$$ \gamma(t) = f \left ( i e^t \right ).$$
By explicit calculation we find
$$
D_0 T = \frac{1}{2}\begin{pmatrix}
-\im f'(i) & -1\\
\re f'(i) & 0\\
\end{pmatrix}
$$
which has dilatation $\mu(0) = \frac{f'(i) - 1}{f'(i) + 1}$.  The
proof will therefore be complete if for some $\Extension$, the normalized Riemann
map satisfies $|f'(i) - 1| < 1/2$ (which, using the formula above,
gives $|\mu(0)| < 1/3$).

Suppose on the contrary that no such $\Extension$ exists.  Then there is a
sequence of $K_n$--quasidisks $\Omega_n$ with $K_n \to 1$, normalized
as above, so that the associated Riemann maps satisfy
\begin{equation}
\label{eq:deriv-close-one}
|f_n'(i) - 1| \geq 1/2 \text{ for all } n.
\end{equation}

Since $\Omega_n$ is obtained from $\hyp$ by applying a
$K_n$--quasiconformal homeomorphism fixing $\{0,i,\infty\}$, the
boundary $\partial \Omega_n$ lies in a $\log(K_n)$--neighborhood of
$\Hat{\R}$ in the Poincar\'e metric of $\Hat{\C} \setminus
\{0,i,\infty\}$ (see \cite[\S 3.D]{ahlfors}), as
pictured in \figref{fig:circlenbd}.  Since $\log(K_n) \to 0$, the
pointed domains $(\Omega_n,i)$ converge to $(\hyp,i)$ in the
Carath\'eodory topology, and thus Riemann maps $f_n$ and their
derivatives converge to the identity uniformly on compact sets.  In
particular $f'_n(i) \to 1$, contradicting \eqref{eq:deriv-close-one}.
This contradiction establishes the lemma.
\end{proof}

\begin{figure}
\begin{center}
\fbox{\includegraphics[width=10cm]{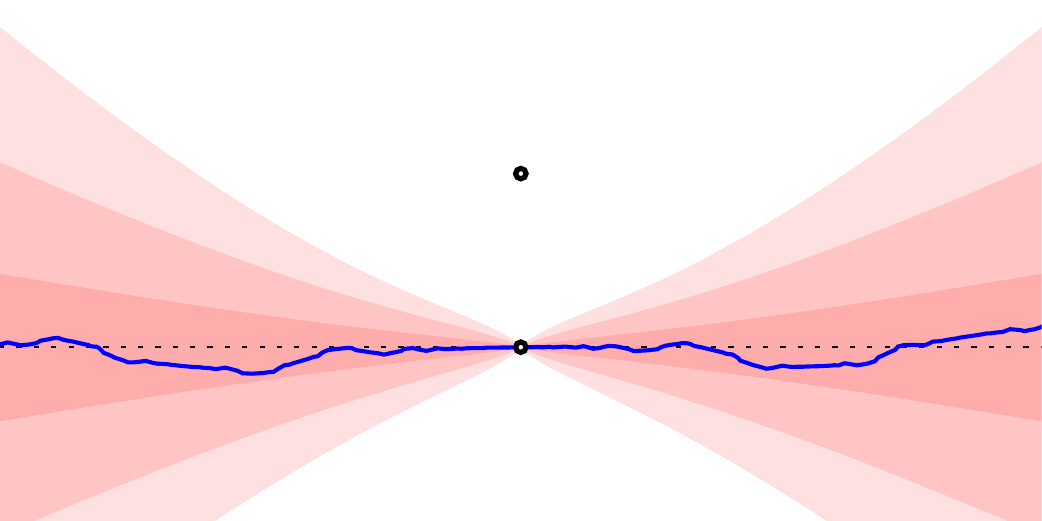}}
\end{center}
\caption{The image of $\R$ under a $K$--quasiconformal deformation of
$\R$
  (normalized to fix $\{0,i,\infty\}$) lies in a
  $\log(K)$--neighborhood of $\R$ with respect to the Poincar\'e metric of $\Hat \C 
\setminus
  \{0,i\}$.
  \label{fig:circlenbd}}
\end{figure}

\subsection{Proof of the extension theorem.}

\begin{proof}[Proof of \thmref{thm:extension}.]
  To fix notation, let $\rho = Q(X,Y) \in \QF_{\Extension}(S)$.
  We construct the extension of grafting and verify its properties in several steps:

\smallskip
\noindent \emph{Step 1: Construction for simple closed curves.}\\
We consider the lamination $t \gamma \in \ML(S)$, where $\gamma$ is a
simple closed geodesic and $t \in \R^+$.  Recall that $\beta_{t
  \gamma,X} : (\hyp \setminus \Tilde{\gamma}) \times (\hyp \setminus
\Tilde{\gamma}) \to \PSL_2(\C)$ is the bending cocycle map for
$(t \gamma, X)$, and as before fix a basepoint $\base \in (\hyp \setminus
\Tilde{\gamma})$.  Abusing notation, we abbreviate $\beta(y) =
\beta_{t \gamma,X}(\base,y)$.

  While $\beta : (\hyp \setminus \Tilde{\gamma}) \to \PSL_2(\C)$ does
  not extend continuously to $\hyp$, there is a natural way to extend it
  to a continuous map $\hat{\beta} : \Tilde{Z} \to \PSL_2(\C)$, where
  $Z = \gr(t \gamma,X)$.  Recall that $\Tilde{Z}$ is obtained from
  $\Tilde{X}$ by replacing each lift of $\gamma$ with a Euclidean
  strip of width $t$ foliated by parallel geodesics.  If $z \in
  \Tilde{Z}$ corresponds to a point $x \in (\Tilde{X} \setminus
  \Tilde{\gamma})$, then we let $\hat{\beta}(z) = \beta(x)$; otherwise
  $z$ belongs to a strip that replaces a lift $g$, and we define
$$ \hat{\beta}(z) = E_s(p,q) \cdot \hat{\beta}(x_0)$$
where $x_0$ is a point in the connected component $P$ of $\Tilde{X}
\setminus \Tilde{\gamma}$ adjacent to $g$ and closer to $\base$,
and where $s$ is the Euclidean distance from $z$ to the edge of the
strip meeting $P$.  Thus, while $\beta(x)$ jumps discontinuously by an
elliptic when $x$ crosses a geodesic lift of $\gamma$ in $\Tilde{X}$,
the extension $\hat{\beta}(z)$ gradually accumulates the same elliptic
as $z$ crosses the associated strip in $\Tilde{Z}$.
  
We can now define the developing map of $\Gr(t \gamma,\rho)$ in terms
of $\hat{\beta}$: Let $f_0 : \Tilde{Z} \to \CP^1$ be the composition
of the lift of the map $Z \to X$ that collapses the grafted cylinder
orthogonally onto $\gamma$ with the Riemann map from $\Tilde{X}$ to the
domain of discontinuity $\Omega$ of $\rho$ covering $X$.  Note that
$f_0$ is holomorphic on the part of $\Tilde{Z}$ coming from $\Tilde{X}
- \Tilde{\gamma}$.  Define
$$
f(z) = \hat{\beta}(z) f_0(z).
$$ By \lemref{lem:universal-K}, the map $f : \Tilde{Z} \to \CP^1$
is a local homeomorphism with complex dilatation $\mu$ satisfying
$|\mu| \leq \frac{1}{3}$.  Identify $\Tilde{Z}$ with the upper
half-plane $\hyp$ equipped with a Fuchsian action of $\pi_1(S)$.  Extend
$\mu$ to $\bar{\hyp}$ by reflection and let $w^\mu : \CP^1 \to \CP^1$ be
the normalized solution to the Beltrami equation.

The quotient of $\hyp$ by the $w^\mu$--conjugated Fuchsian action of
$\pi_1(S)$ gives a Riemann surface $Z^\mu$ (a deformation of $Z$)
and the holomorphic map $f \circ (w^\mu)^{-1} : \hyp \to \CP^1$ is the
developing map of a projective structure $\Gr(t \gamma, \rho)$ on
$Z^\mu$ with holonomy $B_{t\gamma}$.

Thus we have a map $\Gr_{t \gamma} : \QF_{\Extension}(S) \to \P(S)$
satisfying $\hol \circ \Gr_{t \gamma} = B_{t \gamma}$.  Since the
developing map is a holomorphic map from $\Tilde{Z^\mu}$, the
conformal version of this quasi-Fuchsian grafting operation is
$$ \gr(t \gamma,\rho) = \pi \left ( \Gr(t \gamma,\rho) \right ) =
Z^\mu = \left ( \gr(t \gamma, X) \right )^\mu \;\;\text{ where } \; \;
\rho = Q(X,Y).$$  Note that as in the case of Fuchsian grafting along
a simple closed curve, this
grafting operation induces a decomposition of the surface $\gr(t
  \gamma,\rho)$ into a cylinder $A$ and a complementary surface $X_0$
equipped with a conformal isomorphism to $X \setminus \gamma$.
However in the quasi-Fuchsian case, the natural identification of $A$
with $\gamma \times [0,t]$ is only quasiconformal (rather than
conformal).

It is also easy to see that this procedure is a generalization of the
usual projective grafting operation, since when $\rho = Q(X,X)$ is
Fuchsian, the Poincar\'e geodesic joining $0$ to $\infty$ in the
domain of discontinuity is the imaginary axis, the map $T(z) = e^z$ is 
holomorphic, $\mu 
\equiv 0$, $Z^\mu = Z = \gr(t\gamma,X)$, and
$\Gr(t \gamma,Q(X,X)) = \Gr(t \gamma,X)$.

\smallskip
\noindent \emph{Step 2: Continuity and holomorphicity.}\\
  We now analyze the continuity of this extension of grafting as $\rho
  = Q(X,Y)$ is varied in $\QF_\Extension(S)$ (while the lamination $t
  \gamma$ remains fixed).  Recall that Fuchsian grafting is a
  real-analytic map, so $Z = \gr(t \gamma,X)$ varies smoothly with
  $\rho$, as does the grafting cylinder $A \subset Z$.  The Poincar\'e
  geodesic in the domain of discontinuity of $\rho$ also depends
  real-analytically on $\rho$, since the limit set of $\rho$ undergoes
  a holomorphic motion as $\rho$ is varies in $\QF_\Extension(S)$ and the
  Poincar\'e geodesic is the image of a fixed line in $\hyp$ under the
  associated holomorphic family of Riemann maps.  Thus the Beltrami
  coefficient $\mu$, which is supported on the grafting cylinder $A
  \subset Z$, varies smoothly in the interior of $A$.  Combining this
  with the smooth variation of the boundary of $A$ and the continuous
  dependence of solutions of the Beltrami equation on $\mu$, we
  conclude that both the deformed domain surface $Z^\mu$ and the local
  charts of the projective structure vary continuously with $\rho$.
  Thus $\Gr_{t \gamma} : \QF_\Extension(S) \to \P(S)$ and $\gr_{t \gamma}
  : \QF_\Extension(S) \to \T(S)$ are continuous maps.

Since $\hol \circ \Gr_{t \gamma} = B_{t \gamma}$, and $\hol$ is a local
homeomorphism, we can locally express the extension of grafting as
$$\Gr_{t \gamma} = \hol^{-1} \circ B_{t \gamma},$$ where $\hol^{-1}$
is a suitable local branch of the inverse of $\hol$.  Note that the
continuity of $\Gr_{t \gamma}$ ensures that this description is valid
on an open neighborhood of any point in $\QF_\Extension(S)$.  Since $B_{t
  \gamma}$ and $\hol$ are holomorphic maps, it follows that $\Gr_{t
  \gamma} : \QF_\Extension(S) \to \P(S)$ is itself holomorphic.

\smallskip
\noindent \emph{Step 3: Extension to general measured laminations.}\\
For any $\lambda \in \ML(S)$, let $c_n \gamma_n \to
\lambda$ where $c_n \in \R^+$ and $\gamma_n$ are simple closed curves.
To study the convergence of grafting maps, realize $\T(S)$ as a
bounded open set $\Omega \subset \C^N$, which induces an
identification of $\P(S) \simeq T^{1,0}\T(S)$ with the set $\Omega
\times \C^N \subset \C^{2N}$.  In the rest of the proof, we use these
identifications to regard the grafting maps as tuples of holomorphic
functions.

Since the usual grafting operation extends continuously to measured
laminations, the holomorphic maps $\Gr_{c_n \gamma_n} : \QF_\Extension(S)
\to \P(S)$ (and thus also $\gr_{c_n \gamma_n} = \pi \circ \Gr_{c_n
  \gamma_n}$) converge locally uniformly on the set $\F(S)$ of
Fuchsian representations.  To show that $\Gr_{c_n \gamma_n}$ converges
locally uniformly to a holomorphic map $\Gr_\lambda : \QF_\Extension(S)
\to \P(S)$, we need only show that this family of maps is normal,
since any two limit maps of subsequences would then agree on $\F(S)$,
a maximal totally real submanifold, and hence they would agree
throughout $\QF_\Extension(S)$.

Normality is immediate for $\gr_{c_n \gamma_n}$ by the boundedness of
the embedding of $\T(S)$ in $\C^N$, thus these conformal grafting maps
converge locally uniformly to $\gr_{\lambda} : \QF_\Extension(S) \to
\T(S)$.  

\begin{remark}
  We have now established the extension of conformal grafting to
  $\QF_\Extension(S)$.  As mentioned in the introduction, this is the only
  part of \thmref{thm:extension} which is used in the proof of
  \thmref{thm:Lipschitz}.
\end{remark}

Suppose $E \subset \QF_\Extension(S)$ is compact.  We will show that the
restrictions of $\Gr_{c_n \gamma_n}$ to $E$ are uniformly bounded.  Let
  $K \subset \T(S)$ be a compact set in $\T(S)$ containing
$$ F = \bigcup_{n=1}^\infty \gr_{c_n \gamma_n}(E),$$
which exists by uniform convergence of $\gr_{c_n \gamma_n}$ on $E$.
By \lemref{lem:compactness}, in order to construct a compact set
$\Hat{K} \subset \P(S)$ that contains
$$ \Hat{F} = \bigcup_{n=1}^\infty \Gr_{c_n \gamma_n}(E),$$
it suffices to show that all projective structures in $\Hat{F}$ have a
$\delta$--injective disk, for some $\delta > 0$.

Recall that the Riemann surface $\gr(c_n \gamma_n,Q(X,Y))$ is a
$2$--quasiconformal deformation of $\gr(c_n \gamma_n,X)$, and that the
$2$--quasiconformal map $f : \gr(c_n \gamma_n,X) \to \gr(c_n
  \gamma_n,Q(X,Y))$ respects the inclusion of $X \setminus \gamma_n$
into each of these surfaces.  The developing map of $\Gr(c_n
  \gamma_n,Q(X,Y))$ is injective on each connected component of the
lift of $(X \setminus \gamma_n)$ to $\Tilde{\Gr(c_n \gamma_n,
  Q(X,Y))}$.  Since the set of grafted surfaces $\{ X \: | Q(X,Y) \in E
\}$ is compact, and the sequence $c_n \gamma_n$ is convergent in
$\ML(S)$, \corref{cor:injectiveball} provides a uniform radius $r_1$
such that the image of $X \setminus \gamma_n$ in $\gr(c_n \gamma_n,
X)$ contains an $r_1$--ball with respect to the hyperbolic metric on
$\gr(c_n \gamma_n,X)$.

The $2$--quasiconformal map $f: \gr(c_n \gamma_n,X) \to \gr(c_n
\gamma_n,Q(X,Y))$ is uniformly $\frac{1}{2}$--H\"older with respect to
the hyperbolic metric \cite[\S~3C]{ahlfors}, so the image of $X \setminus
\gamma_n$ in $\gr(c_n \gamma_n,Q(X,Y))$ contains a hyperbolic ball of
radius $r_2 = C \sqrt{r_1}$, for a universal constant $C$.  In
particular, the developing map of $\Gr(c_n \gamma_n,Q(X,Y))$ has an
$r_2$--injective disk for all $n \in \N$ and $Q(X,Y) \in
\QF_\Extension(S)$, and we conclude that the sequence of maps
$\Gr_{c_n \gamma_n}$ converges locally uniformly to a holomorphic map
$\Gr_\lambda : \QF_\Extension(S) \to \P(S)$.
\end{proof}

\begin{remark}
  Tanigawa showed that for projective structures on compact surfaces,
  the holonomy map is proper when restricted to $\pi^{-1}(K)$, where
  $K \subset \T(S)$ is any compact set \cite{tanigawa}.  Combined with
  the continuity of the shear-bend map, this provides a shorter (if
  less elementary) alternative to the last three paragraphs of the
  proof of \thmref{thm:extension}.  However, Tanigawa's proof does not
  immediately extend to punctured surfaces, nor do those of the
  similar properness results of Gallo-Kapovich-Marden \cite{GKM}.
\end{remark}

\subsection{The Kobayashi estimate}
\label{subsec:kobayashi}

Using the extension theorem (\ref{thm:extension}), we now prove
the Lipschitz property for the grafting map.

\begin{proof}[Proof of \thmref{thm:Lipschitz}.]
We want to find an upper bound for $d(\gr(\lambda,X), \gr(\lambda
,Y))$. Let $R = \frac{1}{2} \log \frac{1+\Extension}{1-\Extension}$, where
$\Extension$ is as in \thmref{thm:extension}.  It is enough to
prove the Lipschitz property for $X,Y$ such that $d(X,Y) < R/2$, so we
assume this for the rest of the proof.

Let $r_0,r_1 < 1$ be the radii of Euclidean disks concentric with
$\Delta$ that represent hyperbolic disks of radius $R/2$ and $R$,
respectively, in the unit disk model of $\hyp^2$.  Let $\kappa : \Delta
\to \T(S)$ be the \Teich disk such that
\begin{enumerate}
\item $X = \kappa(0)$
\item $Y = \kappa(r)$ for some $r \in \R^+$, $r < r_0$
\end{enumerate}
Note that $Q(X,\kappa(z)) \in \QF_\delta(S)$ for all  $|z| < r_1$

Let $d_{\QF}$ denote the Kobayashi metric on $\QF_{\Extension}(S)$.  Since
$z \mapsto Q(X,\kappa(z))$ is a holomorphic map of $\Delta_{r_1}$
into $\QF_\Extension(S)$, we have $d_{\QF}(Q(X,X),Q(X,Y)) < C d(X,Y)$
where $C$ depends only on $\Extension$ (through $r_0,r_1$).  Since
$\gr_\lambda : \QF_\Extension(S) \to \T(S)$ is holomorphic, it does not
expand the Kobayashi distance, and $$d(\gr(\lambda,X), \gr(\lambda,
Q(X,Y))) < C d(X,Y).$$ Exchanging the roles of $X$ and $Y$ and using the
triangle inequality, we have $d(\gr(\lambda,X), \gr(\lambda,Y)) < 2 C
d(X,Y)$, establishing the theorem for $\Lipschitz = 2 C$.
\end{proof}

\begin{remark}
Since it depends only on $\Extension$ from \lemref{lem:universal-K},
the Lipschitz constant $\Lipschitz$ in \thmref{thm:Lipschitz} is
also universal.  In particular it does not depend on the genus of the
surface $S$.
\end{remark}

\section{Conclusion of Proof}
 \label{sec:Conclusion}
 
In terms of comparing Teichm\"uller geodesic rays and grafting rays,
Theorem \ref{thm:Lipschitz} allows us to freely move the starting
point of a grafting ray by a bounded distance.  We will also need a
similar result of Rafi for Teichm\"uller geodesic rays:

\begin{theorem}[{\cite[\S7]{rafi3}}]
\label{thm:rafi}
For any $\epsilon > 0$ and $\distance > 0$ there exists a constant 
$\Distance>0$ so that for any $X,Y \in \T(S)$ where $X$ is $\epsilon$--thick 
and $d_\calT(X,Y) \leq \distance$ and any
$\lambda \in \ML(S)$, we have
$$
d_\T\big(\G(t,\lambda,X), \G(t, \lambda, Y)\big) < \Distance.
$$
for all $t \geq 0$.
\end{theorem}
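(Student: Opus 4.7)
The plan is to show directly that the Teichmüller distance $d_\T(X_t, Y_t)$, where $X_t = \G(t,\lambda,X)$ and $Y_t = \G(t,\lambda,Y)$, remains bounded as $t \to \infty$. Let $q_X$ and $q_Y$ be the unit-area holomorphic quadratic differentials on $X$ and $Y$ whose vertical measured foliations are projectively equal to $\lambda$; by Hubbard--Masur these exist and are unique. Then $X_t$ (resp.\ $Y_t$) carries the quadratic differential obtained from $q_X$ (resp.\ $q_Y$) by the Teichmüller flow $\mathrm{diag}(e^t, e^{-t})$, so the problem reduces to estimating the quasiconformal distortion between these two flat structures at every time $t$.

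The first step is to set up the initial comparison at $t=0$. Since $X$ is $\epsilon$--thick and $d_\T(X,Y) \leq \distance$, the surface $Y$ is $\epsilon'$--thick with $\epsilon'$ depending only on $\epsilon$ and $\distance$. On thick surfaces, the flat metric of a unit-area quadratic differential is two-sided comparable to the hyperbolic metric, and since $q_X$ and $q_Y$ have the same vertical foliation class, their horizontal foliations have bounded-ratio intersection numbers with every simple closed curve. This produces a bounded-dilatation quasiconformal map $h \from X \to Y$ which is roughly adapted to the singular Euclidean structures.

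Next I would propagate this comparison along the rays. The naive conjugate of $h$ by the Teichmüller maps has dilatation that grows exponentially, so this needs considerable care. Here I would follow a thick-thin decomposition approach: along each ray, there is a well-defined collection of \emph{active intervals}, each associated to a proper subsurface $Y_0 \subseteq S$, during which the boundary curves of $Y_0$ are short in the hyperbolic metric. Because the vertical foliations on the two rays are projectively equal, these active intervals and the subsurfaces they correspond to coincide up to an additive error depending on $\epsilon$ and $\distance$. Outside the active intervals, the flat metric of $q_X$ (stretched by the flow) is uniformly comparable to the hyperbolic metric of $X_t$, and the same holds on the $Y$--side, so one obtains a direct uniform quasiconformal comparison of $X_t$ and $Y_t$ over thick intervals.

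The main obstacle, and the technical heart of the argument, is controlling the twisting that occurs inside the active intervals. Inside such an interval, the geometry of $X_t$ is dominated by an annular neighborhood of a short curve $\alpha$, and the Teichmüller flow effectively moves along a twist-path for $\alpha$. The discrepancy between $X_t$ and $Y_t$ over the active interval is then measured by the difference of the subsurface projections $\pi_\alpha(\mu_X) - \pi_\alpha(\mu_Y)$ of the horizontal foliations to the annular curve complex. The hard part is showing that this discrepancy is bounded by a constant depending only on $\epsilon$ and $\distance$; using Minsky's product regions theorem for the thin part of $\T(S)$, this reduces to a uniform bound on subsurface projections of the horizontal foliations of the two adapted quadratic differentials at $t=0$, which in turn follows from the initial quasiconformal comparison. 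Assembling the bounds over the (essentially disjoint) active intervals yields the uniform constant $\Distance$ asserted in the theorem.
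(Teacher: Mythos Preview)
The paper does not prove this theorem: it is quoted from \cite[\S7]{rafi3} as a black-box input to the proof of Theorem~\ref{thm:Closed}, and no argument for it appears anywhere in the present paper. So there is nothing to compare your proposal against here.

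That said, your sketch is broadly in the spirit of the combinatorial model developed in \cite{rafi3}, where the behavior of a Teichm\"uller geodesic is governed by subsurface projections and the associated \emph{active intervals}. A few of your intermediate claims, however, are stated more confidently than the argument warrants. First, the assertion that the horizontal foliations of $q_X$ and $q_Y$ have ``bounded-ratio intersection numbers with every simple closed curve'' does not follow merely from the vertical foliations being projectively equal; the horizontal foliation is determined by the Riemann surface, and a bounded Teichm\"uller distance between $X$ and $Y$ does not by itself give pointwise control over the Hubbard--Masur horizontal foliation for a \emph{fixed} vertical class. Second, the statement that the active intervals on the two rays ``coincide up to an additive error'' is essentially the substantive content of the theorem, and in \cite{rafi3} it requires the full machinery of short-curve characterization along Teichm\"uller geodesics \cite{rafi1} together with the distance formula; you have asserted it rather than derived it. Finally, the twisting bound in the annular case is also nontrivial: one needs to control the difference of projections $d_\alpha(\mu_X,\mu_Y)$ for the horizontal foliations, and this does not reduce to ``the initial quasiconformal comparison'' without further argument relating quasiconformal distortion to annular subsurface projection. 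So while the scaffolding is right, the load-bearing steps are precisely the ones you have left as assertions.
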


We are now ready to prove \thmref{thm:Main} in the case $S$ has
no punctures by combining Proposition \ref{prop:TheMap},
Theorem \ref{thm:Lipschitz}, and Theorem \ref{thm:rafi}.  The action
of the mapping class group will be used to bridge the gap between the
considerations of section \ref{sec:TheMap}, which give uniform
estimates only for certain points $\Xstd \in \T(S)$ and for $\lambda$
in an open set $U \subset \ML(S)$, and the general case of arbitrary
$\lambda$ and any $\epsilon$--thick $X$.

\begin{theorem}
\label{thm:Closed}
Let $S$ be a compact surface, $X \in \T(S)$ and let $\lambda$
be a measured geodesic lamination on $X$ with unit hyperbolic length.
Then, for all $t \geq 0$ we have
$$ 
d_\calT \Big ( \gr \big(e^{2t}\lambda, X \big), \G \big(t,\lambda,X \big) \Big) 
   \leq \FinalBound,
$$
where $\FinalBound$ is a constant depending on $X$ but not on $\lambda$.
\end{theorem}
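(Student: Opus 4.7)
The plan is to combine the three main ingredients developed above: \propref{prop:TheMap}, which compares a grafting ray from a ``standard'' surface to a \Teich geodesic; \thmref{thm:Lipschitz}, which controls the effect of moving the starting point of a grafting ray; and \thmref{thm:rafi}, which does the same for \Teich geodesics in a fixed vertical direction.

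First I would produce a single standard surface $\Xstd$ close to $X$ that handles every projective class of measured lamination. By compactness of $\PML(S)$ I cover it by finitely many projective neighborhoods, each of the form $[U^{(i)}_{\std}]$ coming from a standard setup as in \secref{sec:TheMap} built on a pair of distinct maximal pseudo-Anosov laminations $(\nu_i, \bnu_i)$, $i = 1, \dots, N$. By \lemref{lem:Dense} combined with the Baire category theorem, the intersection $\bigcap_{i=1}^N (V_{\nu_i} \cap V_{\bnu_i})$ is a dense $G_\delta$ in $\ts$, so I can take $\Xstd$ within unit \Teich distance of $X$ that satisfies (H1) for every $\nu_i$ and $\bnu_i$ and admits transversals satisfying (H2). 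The finiteness of the cover yields uniform values of the parameters $\Length, \Measure, \Height$, and hence, fixing $t_0 = 0$, a single quasiconformality constant $\quasi$ controlling all conclusions of \propref{prop:TheMap}.

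Given $\lambda$ of unit hyperbolic length on $X$, its projective class lies in some $[U^{(i)}_{\std}]$; rescaling $\lambda$ by a bounded factor (which only shifts the parameterization of the grafting ray by a bounded amount) I may assume $\lambda \in U^{(i)}_{\std}$. Then \propref{prop:TheMap} gives a surface $Y_\lambda$ with
$$
d_\calT\bigl(\gr(e^{2t}\lambda, \Xstd),\; \G(t,\lambda, Y_\lambda)\bigr) \leq \quasi
$$
for all $t \geq 0$, while \thmref{thm:Lipschitz} gives $d_\calT(\gr(e^{2t}\lambda, X), \gr(e^{2t}\lambda, \Xstd)) \leq \Lipschitz$. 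It remains to replace $Y_\lambda$ by $X$ in the \Teich ray, whose vertical foliation direction is already $[\lambda]$.

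I expect the main obstacle to be bounding $d_\calT(X, Y_\lambda)$ uniformly in $\lambda$, since \propref{prop:TheMap} produces $Y_\lambda$ abstractly with no a priori bound on its location. The key is compactness: the set $\{\lambda \in \ML(S) : \ell_X(\lambda) = 1\}$ is compact, so $\ell_{\Xstd}(\lambda)$ is bounded above and below, and by continuity of grafting the point $\gr(\lambda, \Xstd)$ lies within a constant $C$ of $\Xstd$ depending only on $X$. Evaluating the proposition at $t = 0$ gives $d_\calT(Y_\lambda, \gr(\lambda, \Xstd)) \leq \quasi$, so the triangle inequality yields $d_\calT(X, Y_\lambda) \leq 1 + C + \quasi =: D$. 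Because $X$ has positive injectivity radius $\epsilon(X)$, \thmref{thm:rafi} applied with parameters $\epsilon(X), D$ produces $\Distance$ with $d_\calT(\G(t,\lambda,X), \G(t,\lambda,Y_\lambda)) \leq \Distance$ for all $t \geq 0$. Three applications of the triangle inequality then give the desired bound with $\FinalBound = \Lipschitz + \quasi + \Distance$.
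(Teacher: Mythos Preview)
Your approach has a genuine gap in the very first step: the projective neighborhoods $[U^{(i)}_{\std}]$ cannot cover $\PML(S)$.  By the construction in \secref{sec:stability} and \lemref{lem:Stability}, each $U^{(i)}$ is a neighborhood of a \emph{maximal} lamination $\nu_i$ consisting of $\lambda$ for which the rectangle decomposition $\calR(\omega_\lambda,\lambda,\Xstd)$ is isotopic to $\calR(\omega,\nu_i,\Xstd)$; the stability argument there hinges on the fact that, at a maximal $\mu$, measure--convergence forces Hausdorff--convergence of supports.  In particular every $\lambda\in U^{(i)}$ must have support Hausdorff--close to the filling support of $\nu_i$.  But a fixed simple closed curve $\gamma_0$ (with any weight) has support a single closed geodesic, which is not Hausdorff--close to any maximal lamination.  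Concretely, the orthogonal foliation $\horizontal(\gamma_0,\Xstd)$ and its singular locus $\theta(\gamma_0,\Xstd)$ look nothing like those of $\nu_i$, so the rectangle decompositions cannot be isotopic.  Hence $[\gamma_0]$ lies outside every $[U^{(i)}_{\std}]$, and no finite (or even infinite) collection of such neighborhoods covers $\PML(S)$.  There is also a circularity problem you glossed over: the sets $U^{(i)}_{\std}$ are only defined \emph{after} $\Xstd$ is fixed, so you cannot first extract a finite cover and then choose $\Xstd$ by Baire category.

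The paper sidesteps both issues with a single idea: rather than covering $\PML(S)$ statically, it uses the north--south dynamics of the pseudo-Anosov maps $\varphi,\bvp$ on $\PML(S)$.  For a uniform power $n$, every $[\lambda]$ satisfies $[\varphi^n(\lambda)]\in[U]$ or $[\bvp^{\,n}(\lambda)]\in[\bU]$; one then applies \propref{prop:TheMap} to $\varphi^n(\lambda)$ on the fixed $\Xstd$, and pushes the resulting estimate back by the mapping class $\varphi^{-n}$ using equivariance of grafting and of \Teich geodesics.  This replaces $\Xstd$ by $\hat X=\varphi^{-n}(\Xstd)$, which is independent of $\lambda$, and the rest of your argument (the Lipschitz bound, the compactness bound on $d_\T(X,Y_\lambda)$, and \thmref{thm:rafi}) goes through exactly as you outlined.
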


\begin{proof}
Recall from \secref{sec:Standard} that $U$ and $\bU$ are disjoint open sets in 
$\ML(S)$ containing $\nu$ and $\bnu$, where $[\nu]$ and $[\, \bnu\,]$ are the 
stable laminations for $\varphi$ and $\bvp$ respectively.  We can assume 
the representatives $\nu$ and $\bnu$ are chosen so that they
have unit length on $\Xstd$. Let $[ U]$ and $[\, \bU \,]$ be the images of $U$ 
and $\bU$, under the projection of $\ML(S)$ to $\PML(S)$.  
Then there is a power $n$ such that for every measured 
lamination $\lambda \in \ML(S)$, either 
$$
[\varphi^n(\lambda)] \in [U]
\qquad\text{or}\qquad
[\, \bvp^{\,n}(\lambda) \,] \in [\,\bU\,].
$$
The key point is that the value of $n$ is independently of $\lambda$
and depends only on sets $U$ and $\bU$. For the rest of the proof, assume 
$[\varphi^n(\lambda)] \in [U]$; the case where
$[\, \bvp^{\, n}(\lambda)\,] \in [\, \bU \,]$ can be dealt with similarly. 

Let $c_\lambda$ be a constant such that 
$c_\lambda \cdot \varphi^n(\lambda) \in U$.
Then it follows from \propref{prop:TheMap} that there is a Riemann surface 
$Z_{\varphi^n(\lambda)}$ and a constant $\quasi = \quasi(t_0)$ such that
for all $t \geq t_0$,
$$
d_\T \Big(\gr\big( e^{2t} c_\lambda \varphi^n(\lambda), \Xstd \big),
 \G \big( t,\varphi^n(\lambda), Z_{\varphi^n(\lambda)} \big) \Big) \leq \quasi.
$$
To get rid of $c_\lambda$, choose $t_\lambda$ so that
$e^{2t_\lambda} = c_\lambda$ and reparameterize using the parameter
$(t + t_\lambda)$.  Since $\nu$ has unit length on $\Xstd$, all the measured 
laminations in $U$ have length close to $1$.  Hence 
$$
\ell_{\Xstd}\big(c_\lambda \varphi^n(\lambda)\big) = 
   c_\lambda \ell_{\Xstd}\big(\varphi^n(\lambda)\big)
$$
is close to $1$. On the other hand, $\ell_{\Xstd}(\varphi^n(\param))$,
as a function on  all measured laminations of unit length on $\Xstd$,
attains a maximum and minimum value.
Therefore, the $c_\lambda$ are bounded above and below, independently of 
$\lambda$, and so the same is true of $t_\lambda$.
 Thus, there is a surface $Y_{\varphi^n(\lambda)}$
(the marked conformal structure determined by the pair of
measured foliations
$e^{-t_\lambda} \varphi^n(\lambda)$ and  $e^{t_\lambda} \horizontal (\varphi^n(\lambda), \Xstd$))
and a constant $\quasi$ such that for all $t \geq 0$,
$$ 
d_\T \Big(\gr\big( e^{2t} \varphi^n(\lambda), \Xstd \big),
 \G \big( t,\varphi^n(\lambda), Y_{\varphi^n(\lambda)}\big) \Big) \leq \quasi.
 $$

Let $Y = \varphi^{-n}(  Y_{\varphi^n(\lambda)})$ and $\hat X= \varphi^{-n}(\Xstd)$.
After moving the above \Teich ray and grafting ray by $\varphi^{-n}$ we have
$$
d_\T \Big( \gr \big(e^{2t} \, \lambda, \Xnew \, \big),
  \G \big( t,\lambda,Y\big) \Big) \leq \quasi.
$$
From \thmref{thm:Lipschitz} we have
$$
d_\T \Big(\gr \big( e^{2t} \lambda, X \big), \gr \big(e^{2t} \lambda,
\Xnew\,\big) \Big) 
  \leq \Lipschitz  \ d_\T(X, \Xnew),
$$
and by \thmref{thm:rafi} we have
$$
d_\T \big(\G(t,\lambda,Y), \G(t,\lambda,X)) < \Distance
$$
Now, combining these three inequalities and the triangle inequality, we have:
\begin{equation*}
d_\calT \Big(\gr\big( e^{2t} \lambda, X \big), \G \big( t, \lambda, X) \Big ) \leq 
\Lipschitz \, d_\T 
(X, \Xnew) + \quasi + \Distance.
\end{equation*}
Note that $\Xnew$ is chosen independently of $\lambda$; the same $n$ works for
all $\lambda$.  Therefore, $d_\T(X, \Xnew)$ depends only on
$X$ and choosing
$$
\FinalBound = \Lipschitz \, d_\T (X, \Xnew) + \quasi + \Distance
$$ 
concludes the proof. The constant $\FinalBound$ depends on $X$ only.
\end{proof}
\section{The case of punctures}
\label{sec:Punctures}
Here we sketch how the argument of
\secsref{sec:Dual}{sec:TheMap} can be modified to prove
\propref{prop:TheMap} when the surface $S$ has finitely many punctures.
By truncating small neighborhoods of the punctures and doubling the
resulting surface along its boundary, we obtain a closed surface
$S^D$.  For $X \in \ts$, the basic strategy is to truncate
horoball neighborhoods of the punctures and deform the complement
slightly to a hyperbolic surface with geodesic boundary, and then
geometrically double this across the boundary to get a surface in
$\T(S^D)$.  For a measured lamination $\lambda$ with compact support
on $S$, let $\lambda_D$ be the measured lamination on $S^D$ which is
the union of $\lambda$ and its mirror image.  Then \thmref{thm:Closed}
provides a map between the graftings along $\lambda_D$ of  the double 
and a \Teich geodesic ray in $\T(S^D)$.  One can assure that this map
is symmetric and obtain a map from $\gr(e^{2t} \lambda, X)$ to a
\Teich geodesic.  However, one needs to be careful so that deforming the
surface commutes with grafting.

\subsection{Projection of a geodesic} 
\label{sec:Projection}
We first recall a theorem which we will use here and in \secref{sec:Example}.  

Let $Q$ be a surface of finite genus, possibly with finitely many punctures.
 Let $\Gamma$ be a collection of disjoint, homotopically distinct simple closed curves 
on $Q$.  Let $R$ be the closure of a component of $S \setminus \Gamma$. 
Extend $\Gamma$ to a pants decomposition
and define associated Fenchel-Nielsen length and twist coordinates.
By forgetting the Fenchel-Nielsen length and twist coordinates associated
to the curves in $\Gamma$ but retaining all remaining Fenchel-Nielsen coordinates,
we obtain a projection 
$$\pi_R: \T(Q) \to \T(R).$$
Here, $\T(R)$ is the space of analytically finite, marked conformal structures on the 
interior of $R$ (so the boundary of $R$ is pinched).

Let $q_0$ be a unit area quadratic differential (see \cite{strebel} 
for definition and background information) and let 
$q_t = \begin{bmatrix} e^t & 0 \\ 0 & e^{-t}\\\end{bmatrix} q_0$ be the image
of $q_0$ under the \Teich geodesic flow. Then, the map
$\G : [a,b] \to \T(Q)$ sending $t$ to the underlying conformal structure 
of $q_t$ is a \Teich geodesic in $\T(Q)$. A description of a the
behavior of a \Teich geodesic is given in \cite{rafi3}.  
We recall from \cite{rafi3} the following theorem that gives a sufficient condition
for $\pi_R(\G(t))$ to fellow travel a \Teich geodesic in $\T(R)$. 

Let $\gamma$ be a boundary component of $R$, and let $\beta$ be an 
essential arc in 
$R$ with both endpoints in $\gamma$.  By the $q_t$--length of $\beta$, 
we mean the $q_t$--length of the shortest arc representing $\beta$ that 
starts and ends on a $q_t$--geodesic representative of $\gamma$. 
Denote this length by $\ell_{q_t}(\beta)$. Define
$$
M_t(\gamma, R) = \min_{\beta} \frac{\ell_{q_t}(\beta) }{\ell_{q_t}(\gamma)},
$$
where $\beta$ ranges over all arcs in $R$ with both endpoints on $\gamma$.
If this quantity is large, then there is an annulus round $\gamma$ in $\G(t)$
that has large modulus.  

\begin{theorem}
[Rafi \cite{rafi3}]
\label{thm:expanding}  
Let $\G: [a,b] \to \T(Q)$ be a \Teich geodesic, and let $q_t$ and $R$ be as above.  
 Then there exists a constant $M>0$ such that if 
\begin{equation} \label{eq:M}
M_t(\gamma, R )> M
\end{equation}
for all $t \in [a,b]$ and for every boundary curve $\gamma$ of $R$,
then there is a geodesic $\G_R \from [a,b] \to \T(R)$ such that
$$ d_{\T(R)} \big( \pi_R(\G(t)), \G_R(t) \big)=O(1).$$
Furthermore, if $R$ is a thick component of the
thick-thin decomposition of $\G(t)$ for every $t \in [a,b]$, then the
condition \eqref{eq:M} can be replaced with
\begin{equation} \label{eq:Diam}
\frac{\diam_{q_t}(R)}{\ell_{q_t}(\gamma)}> M.
\end{equation}
\end{theorem}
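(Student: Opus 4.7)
The plan is to extract, at each time $t$, a quadratic differential $q_t^R$ on the interior of $R$ from the restriction of $q_t$ to $R$, and to show that the resulting family traces a quasi-geodesic in $\T(R)$ whose underlying marked Riemann surface stays within bounded distance of $\pi_R(\G(t))$. The fellow traveler $\G_R$ is then obtained by reparameterizing this quasi-geodesic to unit speed.

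First, I would use the hypothesis $M_t(\gamma,R)>M$ to produce, for each boundary component $\gamma$ of $R$ and each $t$, a flat annular neighborhood $A_t(\gamma)$ of $\gamma$ in the $q_t$-metric whose modulus is comparable to $M$. The quantity $\ell_{q_t}(\beta)/\ell_{q_t}(\gamma)$, minimized over arcs $\beta \subset R$ with both endpoints on $\gamma$, detects either the modulus of a flat cylinder in $q_t$ homotopic to $\gamma$ or, when no such cylinder exists, the modulus of a maximal expanding annulus around $\gamma$; either way one gets a definite annular collar of large modulus around $\gamma$ inside $R$. This is precisely the quantitative input that lets one cut $q_t$ along $\Gamma$ without losing too much geometric information.

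Next, I would define $q_t^R$ as a meromorphic quadratic differential on the punctured surface obtained from $R$ by collapsing each boundary curve $\gamma$ to a puncture. One can either pinch $A_t(\gamma)$ to a node and take the natural limiting differential, or equivalently extend $q_t|_{R \setminus A_t(\gamma)}$ across the punctures as a differential with at most double poles, the residues being determined by the transverse measures of the horizontal and vertical foliations around $\gamma$. By the product region theorem of Minsky, the conformal structure underlying $q_t^R$ lies within bounded Teichm\"uller distance of $\pi_R(\G(t))$: the large-modulus hypothesis ensures that $\G(t)$ sits in a product region where $d_{\T(Q)}$ is comparable, up to an additive error, to the sup of $d_{\T(R')}$ over subsurface factors and hyperbolic plane distances in the Fenchel--Nielsen coordinates of $\Gamma$.

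The key observation is then that the horizontal and vertical foliations of $q_t^R$ are, by construction, the restrictions of those of $q_t$ to $R$, and under the Teichm\"uller flow on $\T(Q)$ these scale by $e^{\pm t}$. Hence, after renormalizing to unit area, the family $q_t^R$ moves along an honest Teichm\"uller geodesic $\G_R$ in $\T(R)$, up to a reparameterization by a factor bounded above and below (the area of $q_t^R$ relative to $q_t$ changes only by amounts controlled by the modulus of $A_t(\gamma)$, which is uniformly bounded below). Combining this with the previous step gives $d_{\T(R)}(\pi_R(\G(t)),\G_R(t))=O(1)$.

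For the second statement, when $R$ is thick throughout, the injectivity radius of the $q_t$-metric in the interior of $R$ is bounded below, so any essential arc $\beta \subset R$ with endpoints on $\gamma$ has $q_t$-length bounded below by a definite multiple of $\diam_{q_t}(R)$. Consequently the hypothesis \eqref{eq:Diam} implies \eqref{eq:M} with a controlled constant, and the first case applies. The main obstacle will be making the cutting-and-collapsing procedure well defined in a family continuously in $t$, and invoking a version of Minsky's product region theorem with explicit dependence on the modulus threshold $M$, so that the implicit constant in $O(1)$ can be shown to be independent of the particular geodesic $\G$.
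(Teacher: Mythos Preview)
This theorem is not proved in the paper; it is stated as a result of Rafi and cited from \cite{rafi3} without proof. Consequently there is no proof in this paper against which to compare your proposal.

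That said, your outline is broadly in the right spirit for how such results are typically proved: one uses the large-modulus hypothesis to isolate $R$ from the rest of the surface, constructs a quadratic differential on $R$ from the restriction of $q_t$, and invokes Minsky's product region theorem to compare the underlying conformal structure with $\pi_R(\G(t))$. A few points in your sketch would need more care. First, the family $q_t^R$ you describe need not trace an \emph{honest} Teichm\"uller geodesic after renormalization: the area of $q_t|_R$ changes with $t$ (since the proportion of $q_t$-area in $R$ versus its complement is not constant along the flow), so the reparameterization is not just a global scale and the path in $\T(R)$ is genuinely only a quasi-geodesic that must then be approximated by a true geodesic. Second, the implication from thickness of $R$ plus \eqref{eq:Diam} to \eqref{eq:M} is not as direct as you suggest: an essential arc $\beta$ in $R$ with endpoints on $\gamma$ need not have $q_t$-length comparable to $\diam_{q_t}(R)$ just because $R$ is hyperbolically thick; the actual argument in \cite{rafi3} uses the thick-thin decomposition in the flat metric and the relation between flat and hyperbolic geometry on thick pieces developed in \cite{rafi2}. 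If you want the full argument you should consult \cite{rafi3} directly.
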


\subsection{The doubling argument}

Let $\ep > 0$ be a constant smaller than the Margulis constant and
 let $X^T$ be the
surface obtained from $X$ by truncating horoball neighborhoods of
the punctures, which are bounded by horocycles of lengths $\ep$.

\begin{lemma} \label{lem:Truncate}
There is a constant $K_0$ such that the following holds:
For any $X \in \ts$, any measured lamination $\lambda$ of compact support on $S$, and
any sufficiently small $\epsilon$,
there is a hyperbolic surface $X^B$ 
with geodesic boundaries of length $\ep$ and a 
$K_0$--quasiconformal homeomorphism $\phi \from X^T \to X^B$
that sends the geodesic representative of $\lambda$ in $X^T$
isometrically to the geodesic representative of $\lambda$ in $X^B$.
\end{lemma}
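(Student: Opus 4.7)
The plan is to construct $X^B$ via a Fenchel--Nielsen deformation of $X$ localized near the punctures and to define $\phi$ using explicit collar coordinates near the horocycle/geodesic boundaries and the tautological isometry elsewhere.

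Since $\lambda$ has compact support disjoint from the cusps of $X$, choose $\epsilon$ small enough that the horoballs of horocycle length $\epsilon$ around each puncture are disjoint from a neighborhood of the geodesic representative of $\lambda$, and also smaller than the Margulis constant.  Pick a pants decomposition of $S$ in which each puncture is a boundary component of a unique ``cusp pair of pants'' whose other two boundary components are simple closed geodesics in $X$ disjoint from $\lambda$; such a decomposition exists by our choice of $\epsilon$.  Define $X^B$ by Fenchel--Nielsen coordinates: keep every length and twist parameter of $X$ the same on interior curves, and change each cusp ``length'' from $0$ to $\epsilon$.  Then $X^B$ is a hyperbolic surface with geodesic boundaries of length $\epsilon$ around each puncture, and every pair of pants not containing a puncture is isometric in $X$ and $X^B$.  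Since $\lambda$ lies in the union of these non-cusp pants, its geodesic representative agrees as a subset in $X$ and $X^B$.

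Define $\phi$ to be the tautological isometry on the non-cusp pants; this sends the geodesic representative of $\lambda$ in $X^T$ isometrically to that in $X^B$.  On each cusp pair of pants, use collar coordinates $(r,\theta)$ near the puncture boundary, with $r \in [0,R]$ the distance from the boundary and $\theta \in \R/\epsilon\Z$ the arclength parameter.  In $X^T$ the hyperbolic metric in these coordinates is $dr^2 + e^{2r} d\theta^2$, while in $X^B$ it is $dr^2 + \cosh^2(r)\, d\theta^2$.  The coordinate identity $(r,\theta) \mapsto (r,\theta)$ sends horocycle to geodesic isometrically on the boundary and is a diffeomorphism of dilatation $e^r/\cosh(r) = 2/(1+e^{-2r}) \leq 2$ on the collar.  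Interpolate between this coordinate identity (near the cusp/geodesic boundary) and the identity (near the two non-puncture boundaries) via an explicit quasiconformal map in Fenchel--Nielsen-adapted coordinates on the pants.

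The main obstacle is to bound the dilatation of this interpolation uniformly in the boundary lengths $a, b$ of the cusp pants.  For $\epsilon$ sufficiently small (depending on $X$), the two hyperbolic structures on the cusp pants agree closely outside the collar: as $\epsilon \to 0$ the metric of the $(a,b,\epsilon)$-pants converges to that of the cusp pants $(a,b,0)$ in the $C^\infty$ topology on compact subsets bounded away from the puncture.  Thus a straightforward interpolation in collar-adapted coordinates has dilatation tending to $1$ as $\epsilon \to 0$, and for $\epsilon$ small enough the overall dilatation of $\phi$ is bounded by some universal constant $K_0$ (independent of $X$ and $\lambda$).
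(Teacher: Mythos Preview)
There is a genuine gap in your argument: the pants decomposition you need does not exist in general.  You assert that one can choose a pants decomposition in which the two non-puncture boundary curves of each cusp pair of pants are disjoint from $\lambda$, and you justify this only by having chosen the horoballs small.  But smallness of the horoballs gives you peripheral curves disjoint from $\lambda$, and peripheral curves bound punctured disks, not pairs of pants.  To obtain a genuine cusp pair of pants you need two \emph{non-peripheral} simple closed curves, and when $\lambda$ fills the complement of the cusps there are no such curves disjoint from $\lambda$.  The once-punctured torus already exhibits the problem: a pants decomposition consists of a single non-separating curve $\gamma$, and if $\lambda$ is an irrational lamination then every simple closed curve, in particular $\gamma$, meets $\lambda$.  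Your subsequent claim that ``$\lambda$ lies in the union of the non-cusp pants'' therefore has no content, and the ``tautological isometry'' carrying $\lambda$ isometrically is not defined.

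The paper avoids this obstruction by working relative to the lamination rather than relative to a pants decomposition.  It extends the support of $\lambda$ to a \emph{maximal} compact geodesic lamination $\Lambda$; the complementary components of $\Lambda$ containing punctures are then punctured monogons, a fixed local model independent of $\lambda$.  The map $\phi$ is constructed to be the identity on $\Lambda$ (hence on $\lambda$) and is given explicitly on each truncated monogon by a horocyclic-foliation argument, sending a horocyclic arc of length $c$ to one of length $c\cosh(\epsilon/2)$.  This localizes the deformation to pieces that are always disjoint from $\lambda$ by construction, which is exactly what your pants approach cannot guarantee.
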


\begin{proof}
By adding finitely many leaves, extend the support of $\lambda$ to a
maximal compact geodesic lamination $\Lambda \subset X$.
Note that $\Lambda$ no longer supports a measure. Each
connected component of $X \setminus \Lambda$ that contains a puncture
is isometric to a punctured monogon (the result of
symmetrically gluing two edges of an ideal triangle).
We describe how to construct the desired map on a truncated monogon. 
\begin{figure}
\begin{center}
\includegraphics{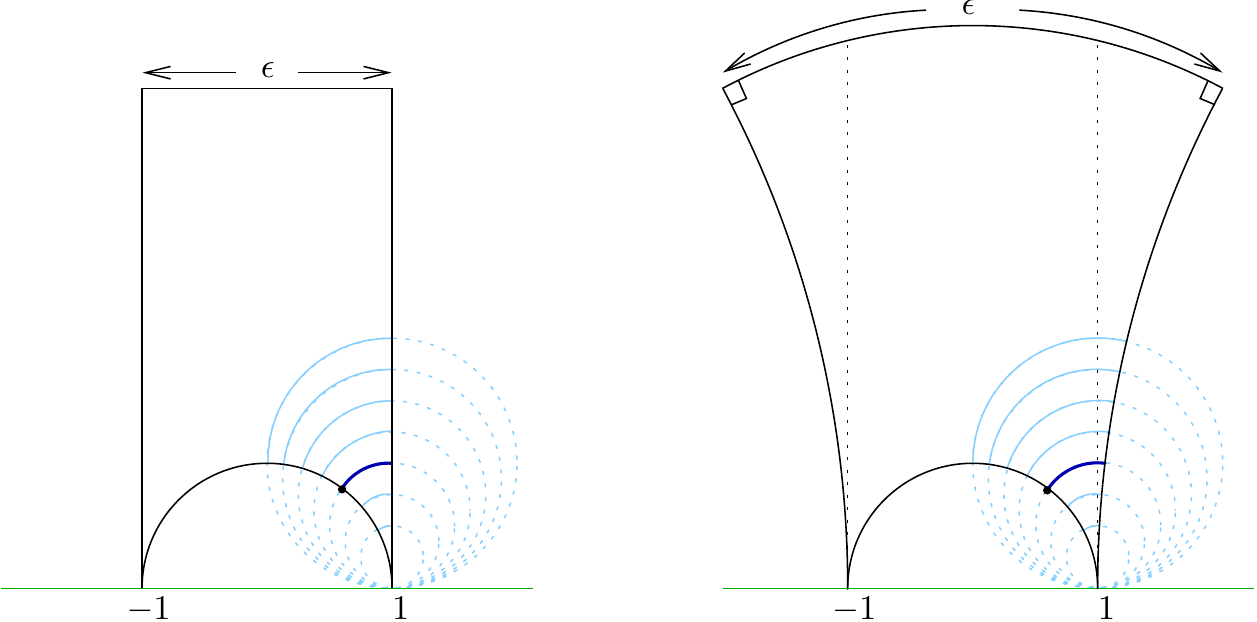}
\end{center}
\caption{On the left is the truncated ideal triangle $P^T$ and on the right is the
polygon $P^B$ to which it is mapped.   A horocyclic segment around $1$ or
$-1$ of length $c < 1$ is mapped to a horocylic segment of length
$c \cosh(\ep/2)$ on the same horocycle.}
\label{fig:monogon}
\end{figure}

Take a punctured monogon and 
cut it into an ideal triangle.  Let $P^T$ be the truncated ideal triangle,
bounded by the horocyclic segment of length $\ep$, as shown in \figref{fig:monogon}.
In the figure, the leaf of $\Lambda$ is represented by
the geodesic $g$ joining $-1$ and $1$.  
Now consider the two geodesics with endpoints at $-1$ and $1$ respectively, that
are symmetric across the geodesic joining $0$ and $\infty$, such that
their common perpendicular has length $\ep$, as shown
on the right in \figref{fig:monogon}.  Let $P^B$ be the polygon bounded
by $g$, these two geodesics, and their common perpendicular. 
To prove the lemma, it 
 is sufficient to show that there is a bi-Lipschitz homeomorphism $h: P^T \to P^B$ which is the identity on $g$.  We need to take care
defining $h$ in the horoball neighborhoods of $-1$ and $1$,
since in the surface $X$, the leaf of $\Lambda$ corresponding to $g$ accumulates. 

  Denote the left and right vertical
edges of $P^T$ by  $g_{-}^T$ and $g_+^T$, respectively,
and denote the left and right edges of $P^B$ 
by  $g_{-}^B$ and $g_+^B$, respectively.
Consider the horocyclic segments of length $1$, that joins $g$ to 
$g_{-}^T$ and $g$ to $g_+^T$. 
Foliate the horoball neighborhoods they bound, with horocyclic segments, as indicated
partially in the figure.  If $J$ is such a segment which joins a point $p$ on $g$ to $g_+^T$,
then define $h$ to map $J$ linearly
onto the horocyclic segment in $P^B$ that joins $p$ to $g_+^B$.
As a result, the portion of $g_+^T$ contained in the horoball neighborhood
$H$ of $1$ is mapped isometrically  (by a parabolic isometry fixing $1$) onto
 $g_+^B \cap H$.  
The analogous statement holds for $g_{-}^T$.  Note that the construction
is symmetric with respect to the geodesic joining $0$ and $\infty$. 

  If $J$ has length $c$, then it follows from elementary calculations that $h(J)$ has
length $c \cosh(\ep/2).$
Thus, for any sufficiently small $\epsilon$, it follows that the map $h$ is bi-Lipschitz on the
two 
horoball neighborhoods so that the Lipschitz constant is uniformly bounded. 

It is easy to extend $h$ to a symmetric bi-Lipschitz map on the remainder of $P^T$; 
a horoball neighborhood of the $\ep$--horocycle in $P^T$ can be mapped 
to a quadrilateral in $P^B$ whose one edge is the common perpendicular in $P^B$ and
whose adjacent edges are contained in $g_-^B$ and $g_+^B$.  The map
can be further extended to the remaining compact part easily.  
\end{proof}

For $t>0$, let
$$
X_t= \gr\big( e^{2t} \lambda, X \big).
$$
The surface $X_t$ can also be truncated to a surface $X_t^T$ and the
map $\phi$ in \lemref{lem:Truncate} extends by identity
to a $K_0$--quasiconformal map $\phi_t$ from $X_t^T$ to a surface
$X_t^B$ with geodesic boundary.
Now double the surface $X^B$ along its boundaries to obtain a closed
surface $X^D$.
Then the marking map $S \to X$ extends naturally (up to a Dehn twists 
around the boundary)
to a homeomorphism $S^D \to X^D$. We fix this marking
map and consider $X^D$ as an element of $\calT(S^D)$.

We argue as in \secref{sec:TheMap}, but this time we choose $\omega$ to 
be a disjoint union of two arcs with a component in each half of $X^D$ that 
is preserved under the reflection.
Then by \thmref{thm:Closed}, for all $t \geq 0$, we have uniformly quasiconformal
maps between
$$
X_t^D= \gr(e^{2t} \lambda_D, X^D)
\qquad\text{and}\qquad
Y_t^D= \G(t, \lambda_D, Y^D)
$$
for some surface $Y^D \in \calT(S^D)$.
 But since all the initial data is
symmetric, from the construction, we can conclude that this map is symmetric
as well. That is, if $Y_t^B$ is one half of $Y_t^D$, then there are
uniformly quasiconformal maps
$$
f_t \from X_t^B \to Y_t^B.
$$

Let $\Gamma$ be the set of curves in $S^D$ preserved by the
reflection, i.e., the curves corresponding to $\partial X^B$.
Every curve in $\Gamma$ has length $\ep$ in the Thurston metric
of $X_t^D$.  Since the hyperbolic metric on $X_t^D$ is pointwise smaller than
the Thurston metric, it follows that every curve in $\Gamma$ has length
less than $\ep$ in $X_t^D$.  Since the distance between $X_t^D$ and $Y_t^D$ is bounded
by some constant $\quasi$,  the curves in $\Gamma$ have length
less than $e^{2 \quasi} \ep$ in  $Y_t^D$ \cite{wolpert}.  In particular, by choosing $\ep$ small enough, we can ensure that the lengths 
of curves in $\Gamma$ are small as we like in  $Y_t^D$.

As discussed above, we have a projection
$$ \pi: \T(S^D) \to \ts$$
which pinches all the curves in $\Gamma$. 
Let $Y_t = \pi(Y_t^D)$.  We can again truncate $Y_t$ to a surface
$Y_t^T$.  It follows from the proof of \lemref{lem:Truncate} that
there is a $K_0$--quasiconformal map $\psi_t \from Y^T_t \to Y^B_t$.
To summarize, we have:
$$
\gr (e^{2t} \lambda, X) = X_t
\stackrel{\text{\tiny cut}}{\leadsto}
X_t^T
\stackrel{\! \phi_t}{\longrightarrow}
X_t^B
\stackrel{\! f_t}{\longrightarrow}
Y_t^B
\stackrel{\:\: \psi_t}{\longleftarrow}
Y_t^T
\stackrel{\text{\tiny glue}}{\leadsto}
Y_t
$$
After gluing back the neighborhoods of the punctures, the map 
$$
\psi_t^{-1} \circ f_t \circ \phi_t : X_t^T \to Y_t^T
$$
can be extended to a quasiconformal map between 
$X_t$ and $Y_t$ whose quasiconformal constants are uniformly bounded for all $t\geq 0$.

It remains to be shown that $Y_t$ fellow travels a 
geodesic in $\calT(S)$.  For this, we use \thmref{thm:expanding}.
Consider the family of quadratic differentials $q_t^D$ associated to the 
geodesic $Y_t^D$ and let $\gamma$ be a curve in $\Gamma$. 
We need to show $M_t(\gamma, S)$ is large. 
Here, $S$ is considered as one component of $S^D \setminus \Gamma$.  Let
$S'$ be the other component.  
Note that since $\gamma$ is disjoint from $\lambda_D$ 
(it is completely vertical) it has a unique geodesic representative
in $q_t$.  Since the hyperbolic length of $\gamma$ is small
in $Y_t^D$, there are a pair of annuli with large
modulus on either side of the $q_t$ geodesic representative of $\gamma$
 \cite{minsky-harmonic}. 
More precisely, we have (see \cite[\S 5]{crs1})
$$ \frac{1}{\ell_{Y_t^D}(\gamma)}
\asymp
\log \max \big\{ M_t(\gamma, S), M_t(\gamma, S') \big\},$$
but by symmetry, the right-hand side can be replaced by 
$\log M_t(\gamma, S)$.
Hence, if $\ep$ is sufficiently small, then
$M_t(\gamma, S)$ is sufficiently large.  Thus it follows from 
 \thmref{thm:expanding} that
$Y_t$, $t \geq 0$, fellow
travels a \Teich geodesic. This finishes the proof.
\section{Example showing that theorem is sharp}
\label{sec:Example}
In this section we prove \thmref{introthm:Example}.  
We will use Minsky's product region theorem as stated below.
\subsection{Product region theorem}
Let $S$ be a surface of finite genus, possibly with finitely many punctures.
 Let $\Gamma$ be a collection of disjoint, homotopically distinct simple closed curves 
on $S$ and let $R$ be a component of $S\setminus \Gamma$. As discussed in \secref{sec:Projection},
we have a projection
$\pi_R: \T(S) \to \T(R)$.  In addition,
for each $\gamma \in \Gamma$, take
$\mathbb H_\gamma$ to be a copy of the hyperbolic upper-half plane, and define
$\pi_\gamma: \ts \to \mathbb H_\gamma$ to be
$$\pi_\gamma(X) = s_\gamma(X) + i/{\ell_X(\gamma)},$$
where $s_\gamma$ is the Fenchel-Nielsen twist coordinate associated to $\gamma$. 
Let $\ep > 0$ be a constant smaller than the Margulis constant and
let $\T_{\text{thin}}(\Gamma,\ep) \subset \ts$
 be the subset in which all curves $\gamma \in
\Gamma$ have hyperbolic length at most $\ep > 0$. 
\begin{theorem}[Minsky \cite{minskyprod}]
 \label{thm:prod}
For $\ep$ sufficiently small, if $X, Y \in \T_{\text{thin}}({\mathcal A},\ep)$, 
then
$$
d_{\ts}(X,Y) \eadd
\max_{R, \gamma} \big\{ d_{\T(R)} \big( \pi_R(X),\pi_R(Y),\;
d_{\mathbb H_\gamma} (\pi_\gamma(X), \pi_\gamma(Y)) \big\} 
$$
where the additive constant depends only on $\ep$ and the topological type of $S$. 
\end{theorem}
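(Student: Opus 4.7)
The statement is a quasi-isometry between the Teichm\"uller metric restricted to the thin part and the sup-metric on a product of lower-complexity Teichm\"uller spaces with upper half-planes. The plan is to prove the two inequalities separately, with the nontrivial direction being the upper bound on $d_{\ts}$.

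For the easy direction (showing that each coordinate projection is a coarse contraction), I would argue that each $\pi_R$ and each $\pi_\gamma$ is $1$--Lipschitz up to an additive constant. For $\pi_R$: given a $K$--quasiconformal map $f \from X \to Y$, collapse each curve of $\Gamma$ to a node; since the standard collars have large modulus (of order $1/\ep$), the induced map on $R$ extends to a $K'$--quasiconformal map $X_R \to Y_R$ with $K' = K + O(\ep)$, which controls $d_{\T(R)}(\pi_R(X),\pi_R(Y))$ up to an additive constant via the definition of Teichm\"uller distance. For $\pi_\gamma$: the Bers--style embedding of $\T(S)$ into the Teichm\"uller space of the annulus $\gamma \times (0,1)$, sending $X$ to the standard collar of $\gamma$ in $X$, is a coarse isometry for short $\gamma$, and the Teichm\"uller metric of that annulus is exactly the hyperbolic metric on $\mathbb H_\gamma$ in the twist and length coordinates.

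For the harder direction, given $X, Y \in \T_{\text{thin}}(\Gamma,\ep)$ with the right-hand side equal to $D$, I would construct an explicit quasiconformal map $X \to Y$ of dilatation $e^{2(D + O(1))}$. Decompose $X$ as the union of standard collars $A_\gamma(X)$ for $\gamma \in \Gamma$ together with the complementary thick pieces $R_i(X)$ (one for each component $R_i$ of $S \setminus \Gamma$), and do the same for $Y$. On each thick piece, realize $\pi_{R_i}$ by the Teichm\"uller map between $R_i(X)$ and $R_i(Y)$, whose dilatation is at most $e^{2 d_{\T(R_i)}(\pi_{R_i}(X),\pi_{R_i}(Y))} \le e^{2D}$. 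On each collar $A_\gamma(X)$, which in Fenchel--Nielsen coordinates is an explicit flat cylinder, realize $\pi_\gamma$ by the affine map of cylinders matching the twist and modulus change; this map has dilatation at most $e^{2 d_{\mathbb H_\gamma}(\pi_\gamma(X), \pi_\gamma(Y))} \le e^{2D}$. The two constructions agree on the thickness scale on $\partial A_\gamma$, up to a bounded mismatch absorbed by a thin buffer strip along the boundary of each collar where the two definitions are interpolated.

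The main obstacle is controlling the dilatation of this interpolation on the buffer strips, which is where the additive error enters. The key quantitative input is that in a collar of modulus $m \asymp 1/\ell_X(\gamma)$, the Fenchel--Nielsen twist parameter is encoded by a horizontal translation on the model flat cylinder and the length parameter by its circumference, and both parameters extracted from $X$ or $Y$ are well-defined up to $O(1)$ error inside a buffer of any fixed thickness; this is essentially the content of Minsky's analysis of the ``Margulis tube geometry'' in \cite{minskyprod}. Once the interpolations are controlled, patching the thick-piece and collar maps yields a globally $K$--quasiconformal map $X \to Y$ with $\log K \le 2D + O(1)$, which by the definition of the Teichm\"uller metric gives $d_{\ts}(X,Y) \le D + O(1)$ and completes the proof. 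Throughout, the implicit constants depend only on $\ep$ and the topological type of $S$, since this is the compactness scale at which the buffer estimates are carried out.
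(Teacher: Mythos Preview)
The paper does not give a proof of this statement: Theorem~\ref{thm:prod} is quoted from Minsky's paper \cite{minskyprod} and used as a black box in \secref{sec:Example}. So there is no ``paper's own proof'' to compare your proposal against.

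That said, your sketch is a reasonable outline of the strategy in \cite{minskyprod}: the easy direction is that the projections $\pi_R$ and $\pi_\gamma$ are coarse contractions, and the hard direction is an explicit construction of a quasiconformal map built by gluing Teichm\"uller maps on the thick pieces to affine maps on the collars, with the additive error coming from the interpolation on buffer regions. One caution: your claim that $\pi_R$ is coarsely $1$--Lipschitz via ``collapsing curves to nodes and extending the quasiconformal map'' is not quite how Minsky argues and glosses over a real issue---a $K$--quasiconformal map $X \to Y$ need not send the thin part of $X$ near the thin part of $Y$, so you cannot simply restrict and extend. Minsky's argument for this direction instead goes through extremal length estimates. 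If you intend this as more than a plausibility sketch, that step would need to be redone.
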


\subsection{Construction of the example}

\begin{the_example}
There exists a sequence of
  points $X_n$ in $\ts$ and measured laminations $\lambda_n$ with unit
  hyperbolic length on $X_n$ such that for any sequence $Y_n$ in
  $\ts$,
$$\sup_{n,{t \geq 0}} d_\calT \left(\gr(e^{2t} \lambda_n, X_n), \G(t,\lambda_n,Y_n) \right ) 
= \infty.$$
\end{the_example}

\begin{proof}
Let $S$ be a surface of genus $2$. First we construct the sequences $X_n$ 
and $\lambda_n$.  Let $\gamma$ be a separating curve on $S$ and
denote the components of $S \setminus \gamma$ by
$R$ and $R'$. Fix a pair of curves $\alpha$ and $\beta$ in $R$ that intersect
exactly once and a pair of curves $\alpha'$ and $\beta'$ in $R'$
intersecting exactly once. Let $X_n$ be any hyperbolic surface where 
$$\ell_{X_n}(\gamma) = 1/n \quad \text{and} \quad
\ell_{X_n}(\alpha)\emul \ell_{X_n}(\beta) \emul
\ell_{X_n}(\alpha')\emul \ell_{X_n}(\beta') \emul 1.$$
In particular, this implies that $R$ and $R'$ are thick parts in the thick-thin
decomposition of $X_n$ for all sufficiently large $n$. 

Now choose $\lambda$ and $\lambda'$ to be measured laminations with supports 
in $R$ and $R'$ respectively, so that 
$$\ell_{X_n}(\lambda)= \ell_{X_n}(\lambda')=1.$$
Note that this implies in particular that the intersection numbers
$\I (\alpha, \lambda), \I(\beta,\lambda)$ are bounded above. 
We also assume that $\lambda$ and $\lambda'$ are co-bounded.  
That is, the relative twisting (see for example \cite[\S 4.2]{crs1})
of $\lambda$ and $\alpha$,
 and that of $\lambda$ and $\beta$ around any curve in $R$ is uniformly bounded.
 And assume that the analogous statement holds for $\alpha', \beta'$ and
$\lambda'$ in $R'$.
Define
$$
\lambda_n= \left(\frac 1n\right) \lambda+ \left(\frac {n-1}n \right) \lambda'.
$$

Now we examine the grafting ray $\gr(e^{2t}\lambda_n,X_n)$ and will show that 
at $t=(\log n)/2$ the hyperbolic metric of $\gr(e^{2t}\lambda_n,X_n)$, when
restricted to $R$, does not differ much from the metric of $X_n$.
For convenience, let $\gr_n=\gr(n\lambda_n, X_n)$.  Recall that for any curve
$\delta$, its hyperbolic length on $\ell_{gr_n}(\delta)$ on $\gr_n$
is less than its  length in the Thurston metric.  And, the length of $\delta$ in the Thurston
metric is less than
$\ell_{X_n}(\delta) + n\,  \I(\delta, \lambda_n)$  \cite{mcmullen}. Therefore, 
$$
\ell_{gr_n}(\alpha) \leq \ell_{X_n}(\alpha)+ n \I(\alpha, \lambda_n) 
\lmul  1 + n \cdot \frac 1n \I(\alpha, \lambda) \lmul 1. 
$$
Similarly,
$$
\ell_{gr_n}(\beta) \prec 1.
$$
Since $\alpha$ and
$\beta$ intersect once, an upper-bound for the length of one provides
a lower-bound for the length of the other. Hence,
$$
\ell_{\gr_n}(\alpha) \emul \ell_{gr_n}(\beta) \emul 1.
$$
This implies that the restrictions of the hyperbolic
metrics of $X_n$ and $gr_n$ to $R$ are not far apart.  Then it follows from
\thmref{thm:prod} that:
\begin{equation} \label{eq:X-gr}
d_{\calT(R)} \big(\pi(X_n), \pi(gr_n) \big) = O(1),
\end{equation}
where $\pi:  \T(S) \to \T(R)$ is the projection as defined above.

Let $Y_n$ be any sequence of points in \Teich space. 
If $d_\T(X_n, Y_n) \to \infty$, then we are done. Otherwise, there is a constant 
$\Haus$ such that
$$\sup_n d_\T(X_n, Y_n) \leq \Haus.$$ 
We examine the behavior of the \Teich geodesic $\G(t, \lambda_n, Y_n)$
and will use \thmref{thm:expanding} to show that for 
$\G_n = \G((\log n)/2, \lambda_n, Y_n)$ we have
$$
d_{\T(R)} \big(\pi(Y_n), \pi(\G_n) \big) \eadd \frac{\log n}{2}.
$$

First, on $Y_n$,  for any simple closed curve $\delta$  
we have  \cite{wolpert}
$$ 
e^{-2 \Haus} \leq \frac{\ell_{Y_n} (\delta)}{\ell_{X_n}(\delta)} 
\leq e^{2 \Haus},$$
so that 
\begin{equation}
\label{eqn:comparable}
\ell_{X_n}(\delta) \emul \ell_{Y_n}(\delta)
\end{equation}
with multiplicative constants depending on $\Haus$.
In particular this implies that $\ell_{Y_n}(\gamma) \emul 1/n$, and $R$ and $R'$ are
thick components in the thick-thin decomposition of $Y_n$ for all sufficiently large $n$.  Furthermore, because
$\lambda$ and $\lambda'$ were chosen to be co-bounded, it follows from \cite{rafi1} 
that along the geodesic ray $\G(t,\lambda_n, Y_n)$, $t \geq 0$, the curve $\gamma$ is the only
curve that is very short, and $R$ and $R'$ remain thick.  

Let $q_{t,n}$ be the unit-area quadratic differential on  $\G(t, \lambda_n, Y_n)$ 
with vertical foliation in the class of $\lambda_n$. 
Consider the representatives of $R$ and $R'$ with $q_{t,n}$--geodesic boundaries
and let $d_{t,n}$ and $d_{t,n}'$ be their $q_{t,n}$--diameters respectively.  
In order to apply \thmref{thm:expanding}, we need to show that 
$d_{t,n}/\ell_{q_{t,n}}(\gamma)$ is very large.  
For convenience, let $q = q_{0,n}$, $d=d_{0,n}$, and $d' = d'_{0,n}$. 
Since $\gamma$
is disjoint from $\lambda_n$, its $q$--geodesic representative is unique
and is a union of vertical saddle connections.  We have 
(see \cite[\S 5.2]{crs1})
$$
n \asymp \frac{1}{\ell_{Y_n}(\gamma)} \asymp
\log \frac{\max\{d, d' \}}{\ell_{q}(\gamma)}.
$$

If $d \geq d'$, then it follows that
$$ n \asymp \log \frac{d}{\ell_q(\gamma)}.$$
If $d \leq d'$, then 
let $\eta$ denote the horizontal foliation of $q$.  Since $R'$ is thick in $Y_n$, 
it follows from \cite{rafi2} and \eqref{eqn:comparable} that
\begin{equation*}
\ell_{q}(\lambda') \emul d' \,\ell_{Y_n}(\lambda') \emul d'\, \ell_{X_n}(\lambda').
\end{equation*}
Then we have
$$ 1=\I (\lambda_n, \eta) > \frac{n-1}{n}\ell_q(\lambda') \emul d' \ell_{X_n}(\lambda')
\emul d'.$$
Hence we get 
$$n\asymp \log \frac{\max\{d, d' \}}{l_{q}(\gamma)} \prec \log \frac{1}{\ell_{q}(\gamma)}.$$
Therefore, $\ell_{q}(\gamma) \lmul e^{-O(n)}$.
Thus, for $n$ large enough,
$d/\ell_{q}(\gamma)$ is large. 

Since the horizontal length of $\gamma$ is zero, 
$\ell_{q_{t,n}}(\gamma) = e^{-t} \ell_q(\gamma)$.
On the other hand, since $d_{t,n}$ can decrease at most 
exponentially fast (in fact, in this example, it is basically constant), 
we have $d_{t,n} \gmul e^{-t} d$.  Thus $d_{t,n}/\ell_{q_{t,n}}(\gamma)$ remains 
large for all $t \geq 0$, as desired.  

Then, it follows from \thmref{thm:expanding} that for all $t \geq 0$
$$ 
  d_{\T(R)} \big( \pi(Y_n), \pi(\G(t, \lambda_n, Y_n)\big) \eadd t.
$$
In particular, 
\begin{equation} \label{eq:Y-G}
  d_{\T(R)} \big(\pi(Y_n), \pi(\G_n) \big) \eadd \frac{\log n}{2}.
\end{equation}
By \thmref{thm:prod}, $d_\T(X_n, Y_n) \leq \Haus$ implies
\begin{equation}\label{eq:Y-X}
  d_{\T(R)} \big(\pi(Y_n), \pi_R(X_n)\big) = O(1).
\end{equation}
Also by \thmref{thm:prod}, we have
\begin{equation} \label{eq:left}
d_\T(\gr_n, \G_n) \gadd d_{\T(R)}\big(\pi(\gr_n), \pi(\G_n)\big) 
\end{equation}
Now, applying the triangle inequality and using Equations 
\eqref{eq:X-gr}, \eqref{eq:Y-G}, \eqref{eq:Y-X} and \eqref{eq:left} we have
$$
d_\T(\gr_n, \G_n) \gadd \frac{\log n}2,
$$
which goes to infinity as $n \to \infty$. This finishes the proof. 
\end{proof}

\end{document}